\newcommand{\lab}[1]{\label{#1}}
\newcommand{\bi}{b}
\newcommand{\di}{d}
\newcommand{\zero}{{\bf{0}}}
\newcommand{\one}{{\bf{1}}}
\begin{document}

  \title[]{On the equational theory of  finite modular lattices}
  \subjclass[2000]{06C05, 03D40, 06B25}
  \keywords{Equational theory, finite modular lattices, decidability} 

  \author[C.~Herrmann]{Christian Herrmann}
\begin{abstract}
It is shown that there is $N$ such that there 
is no algorithm to decide for  identities in at most $N$  variables  
validity in the class of  finite modular lattices.
This is based on Slobodskoi's result that the Restricted
Word Problem is unsolvable for the class of finite groups
and relies on Freese's technique 
of capturing  group presentations within free modular lattices.
\end{abstract}

\maketitle

\newtheorem{thm}{Theorem}[section]
\newtheorem{pro}[thm]{Proposition}
\newtheorem{cor}[thm]{Corollary}
\newtheorem{lem}[thm]{Lemma}
\newtheorem{prob}[thm]{Problem}
\newtheorem{obs}[thm]{Observation}
\newtheorem{fact}[thm]{Fact}
\newtheorem{claim}[thm]{Claim}

\newcommand{\ov}[1]{\overline{#1}}

\newcommand{\mb}[1]{\mathbb{#1}}
\newcommand{\mc}[1]{\mathcal{#1}}
\newcommand{\co}[1]{}

\section{Introduction}
Since Dedekind's
 early result on modular lattices with $3$ generators,
calculations in modular lattices have served to reveal structure,
particularly  
in  algebraic and geometric contexts. 
 Though, as shown by Hutchinson \cite{hutch}
and Lipshitz \cite{lip}, the Restricted  Word Problem for modular lattices
is unsolvable ($5$ generators suffice)
and so is the Triviality Problem.
 These results remain valid for any class 
of modular lattices containing 
the  subspace lattices of some infinite dimensional 
vector space. 
Applying von Neumann's  rings 
associated with frames, i.e. coordinate systems,  
 in modular lattices, the proof
relies on interpreting a  finitely presented group
with unsolvable word problem cf.  Section~\ref{5.4}, below.

On the other hand,  for many
 rings $R$, including all  division rings and
homomorphic images of $\mathbb{Z}$, the equational theory of
 the class of all lattices $L(_RM)$ of submodules
of $R$-modules
is decidable \cite{HHwp}; a thorough
analysis has been given by   G\'abor Cz\'edli and George
Hutchinson
\cite{HC}.

Again based on frames and the fact, shown by
Andr\'as Huhn \cite{huhn},
that frames freely generate projective modular lattices,
Ralph Freese \cite{fr} proved unsolvability of the Word 
Problem for the modular lattice $FM(5)$ with $5$ free generators.
 On the model side, he used 
a construction, due to Dilworth and Hall,
obtaining a modular lattice matching an upper section of one with 
a lower section of the other - here applied to height $2$ 
intervals in subspace lattices of $4$-dimensional vector spaces.
The matching was such 
that  results of Cohn and McIntyre could be used
to 
 capture group presentations within one of the
associated skew fields.  
 On the syntactic side, this
structure is reflected  in a  sublattice
 of $FM(5)$, 
by a method to be called \emph{Freese's technique} (definitions are given below):
\begin {itemize}
\item Start with terms providing, in a free modular lattice, a configuration
  composed of frames.
\item Use ``reduction'' of frames (mimicking subquotients of  modules) to 
force additional relations, e.g. characteristic $p$
of  frames for some prime $p$.
\item Based on  integers            in coordinate rings,
construct  elements behaving well under reduction
(called ``stable'' in \cite{fm4}), a property  which is inherited 
under reduction and change of the reference frame via glueing. 
\item Use  stable elements obtained via glueing 
 as group generators and
force group relations on these via reduction. 
\end{itemize}
Actually, Freese's proof associates a projective modular
lattice with any finitely presented group
and his result  remains valid for all
varieties of modular lattices containing all infinite  modular
lattices of height $6$.

The case   of the free modular lattice with $4$ 
generators has been  done  in \cite{fm4}
 interpreting finitely presented  $2$-generator groups $G$
via a concept of ``skew-frames of characteristic $p\times p$'',
providing $2$ stable elements. Here,  models are  obtained by a glueing construction
involving a lattice ordered system of components 
given as lattices of submodules
of free  
  $(\mathbb{Z}/p\mathbb{Z})G$-modules.

The result crucial for the present note is the following.
\begin{thm}\lab{slo}
{\rm Slobodskoi} \cite{slo}.
The Restricted Word Problem for the class of  finite groups is
unsolvable.
That is, there is a list $\bar g=(g_1, \ldots ,g_n)$ 
of generator symbols and a finite set of 
   relations $\rho_i(\bar g)$  in the language of groups
such that there is no algorithm to decide,  for any   word $w(\bar g)$, 
whether $w(\bar a)=e$ for all finite groups $G$
and all $\bar a$ in $G$ satisfying the relations $\rho_i(\bar a)$
for all $i$.
\end{thm}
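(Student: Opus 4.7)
The plan is to reduce an undecidable computational problem -- most naturally the halting problem for a fixed universal Turing machine or two-counter Minsky machine $M$ -- to the restricted word problem. Concretely, I aim to exhibit once and for all a finite group presentation $\langle g_1,\ldots,g_n \mid \rho_1(\bar g),\ldots,\rho_k(\bar g)\rangle$ together with a computable map $u \mapsto w_u(\bar g)$ from potential inputs to group words, so that $w_u(\bar a)=e$ holds in every finite group $G$ with elements $\bar a \in G$ satisfying each $\rho_i(\bar a)$ if and only if $M$ fails to halt on $u$. An algorithm for the restricted word problem would then decide halting, contradicting Turing.

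First I would set up the simulation using Boone--Britton-type technology: configurations of $M$ are encoded by group words built from generators representing states, tape symbols, and head position, and each transition of $M$ is reflected by a conjugation-type defining relation. This classically yields unsolvability of the ordinary word problem of the abstract group $\langle \bar g\mid \rho_i\rangle$, and it handles the ``$M$ halts on $u$ $\Rightarrow$ $w_u(\bar a)\ne e$ in some finite model'' direction: for any halting trace of bounded length, one constructs from the combinatorics of the trace a concrete finite group (say a permutation representation of suitable degree) in which the $g_i$ can be interpreted so as to satisfy the $\rho_i$ while $w_u$ remains non-trivial.

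The main obstacle is the converse direction. A non-halting input $u$ must force $w_u(\bar a)=e$ in \emph{every} finite group satisfying the relations, not merely in the infinite abstract group. Residual finiteness of the presented group would suffice, but in general fails -- indeed the Novikov--Boone groups are notoriously not residually finite. The idea I would pursue is to constrain the relations so that any putative finite witness $(G,\bar a)$ with $w_u(\bar a)\ne e$ can be converted, via the finite multiplication table of $G$, into an explicit halting computation of $M$ on $u$. This is typically arranged by encoding counters and tape cells inside carefully chosen finite cyclic subgroups of varying prime orders, bounding the orders of stable letters, and replacing unbounded HNN extensions by group extensions whose defining identities survive in all finite quotients. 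The delicate balance is to keep the presentation rich enough to simulate arbitrary computation while rigid enough that non-halting collapses $w_u$ in every finite model; once this two-way faithfulness is established, the reduction is immediate.
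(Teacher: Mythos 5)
The paper does not prove Theorem~\ref{slo}: it is imported verbatim from Slobodskoi \cite{slo}, with pointers to \cite[Section 2]{brid} for a review and \cite[Section 7.4]{sapir} for a detailed analysis, so there is no in-paper argument to compare yours against. Judged on its own terms, your proposal correctly identifies the right shape of reduction (from the non-halting set of a fixed machine, so that a $\Pi_1$-complete set maps into the co-r.e.\ set of finite-group consequences of a fixed presentation), the right machine model (Slobodskoi in fact uses a two-tape Minsky machine, cf.\ the remark at the end of Section~\ref{5.4}), and, crucially, the right obstruction: a Boone--Britton simulation only controls the infinite group $\langle \bar g \mid \rho_i\rangle$, and since Novikov--Boone groups are not residually finite, nothing forces $w_u(\bar a)=e$ in every \emph{finite} model when $M$ does not halt on $u$.

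However, naming the obstruction is not the same as removing it. The entire mathematical content of the theorem is the construction of a specific finite presentation with the property that any finite group $G$ and tuple $\bar a$ satisfying the relations with $w_u(\bar a)\neq e$ can be decoded into a genuine halting computation of $M$ on $u$ --- this is what occupies Slobodskoi's paper and the long analysis in \cite[Section 7.4]{sapir}. Your sketch gestures at the standard devices (bounding orders of stable letters, encoding counters in cyclic subgroups of varying prime orders, replacing HNN extensions by extensions whose identities survive in finite quotients) but supplies neither the relations nor the decoding argument, and explicitly defers the verification (``once this two-way faithfulness is established, the reduction is immediate''). The forward direction also needs more care than you allow: one must exhibit, for each halting input, a finite group in which \emph{all} of the $\rho_i$ hold yet $w_u$ is non-trivial, and for the heavily constrained presentations required by the converse this is a nontrivial consistency check rather than a routine permutation representation. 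As it stands the proposal is a correct plan of attack, not a proof.
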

Kharlampovich \cite{olga} proved the analogue 
for finite nilpotent groups, even more restricted classes
of finite groups have been dealt with in \cite{olga2}. 
 A concise review
of Slobodkoi's result
has been given in \cite[Section 2]{brid}. 
For a detailed analysis  see \cite[Section 7.4]{sapir}.

A rather immediate consequence of Thm.~\ref{slo}
is  that the Restricted Word Problem
is unsolvable for any class  of finite modular lattices
containing all subspace lattices of finite vector spaces
 cf. Section~\ref{5.4}.
The same applies  to the Triviality Problem \cite{cons},
based on the unsolvability for the class of finite groups,  
proved by Bridson and Wilton  \cite{brid}.

\begin{thm}\lab{thm}
With $n$ from Slobodkoi's result, the set of   identities in  $n+6$ variables,
  valid in  all  finite   modular lattices, is non-recursive.
\end{thm}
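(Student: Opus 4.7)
\noindent\textit{Proof plan.}
The plan is to reduce the Restricted Word Problem for finite groups (Theorem~\ref{slo}) to the problem of checking identities in $n+6$ variables in the class of finite modular lattices. Let $\bar g = (g_1,\ldots,g_n)$ and $\rho_1(\bar g),\ldots,\rho_m(\bar g)$ be the Slobodskoi data, and let $w(\bar g)$ be an arbitrary group word. I would construct, uniformly in $w$, a lattice identity $\iota_w$ in variables $x_1,\ldots,x_n,y_1,\ldots,y_6$ such that $\iota_w$ holds in every finite modular lattice if and only if $w(\bar a)=e$ in every finite group $G$ and every tuple $\bar a\in G^n$ satisfying the $\rho_i$. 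Undecidability of the latter, by Theorem~\ref{slo}, then yields undecidability of the former.

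Using the six auxiliary variables $y_1,\ldots,y_6$ I would encode a spanning von Neumann frame of order $k\geq 4$ in a complemented section of the lattice, along the lines of Freese's construction for $FM(5)$ \cite{fr} and the skew-frame setup of \cite{fm4}. In a finite modular lattice such a frame yields a finite coordinate ring $R$, which, by building a characteristic-$p$ clause into the frame hypothesis, may be taken to be a finite $\mathbb{F}_p$-algebra. Each variable $x_i$, $1\leq i\leq n$, is then forced by lattice-theoretic hypotheses to be the ``graph'' of an invertible element of $GL_k(R)$, and the group relations $\rho_i(\bar g)$ translate entry by entry via coordinatisation into lattice equations $\sigma_i(\bar x,\bar y)$. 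The word $w$ itself becomes a lattice term $\tau_w(\bar x,\bar y)$ whose value, under the frame and relation hypotheses, is the graph of the matrix $w(\bar a)$; the assertion ``$w(\bar a)=e$'' reads as the lattice equation $\tau_w=\epsilon(\bar y)$, where $\epsilon$ is the graph of the identity matrix.

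To fold the conjunction of frame hypotheses and relation hypotheses into a single identity rather than a quasi-identity I would use the standard bundling device: within the complemented section supplied by the frame, each pair $(p_j,q_j)$ of hypothesis terms contributes a symmetric-difference element, and a single consistency element $\gamma(\bar x,\bar y)$ is formed as their join. The candidate identity
\[
 \tau_w \vee \gamma(\bar x,\bar y) \;=\; \epsilon(\bar y)\vee \gamma(\bar x,\bar y)
\]
then holds in every finite modular lattice precisely when the corresponding quasi-identity does: if all hypotheses hold then $\gamma$ collapses to the bottom of the section and the equation reduces to $\tau_w=\epsilon$, whereas if any hypothesis fails then $\gamma$ is forced to the top and both sides equal the top. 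A finite group $G$ satisfying the $\rho_i$ with $w(\bar a)\neq e$ is converted into a finite modular lattice falsifying $\iota_w$ by taking the subspace lattice of a faithful $\mathbb{F}_pG$-module; conversely, any finite modular lattice failing $\iota_w$ produces, via its coordinate ring and the matrices encoded by the $x_i$, a finite-group counterexample to Slobodskoi.

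The principal obstacle is the tight variable budget: a spanning frame of order at least $4$ must be packed into just six variables while preserving enough multiplicative structure for coordinatisation, and each group generator together with its relations must be encoded by a single lattice variable with no auxiliary witnesses. Both goals are reachable via the elimination techniques in \cite{fr,fm4}, but their simultaneous realisation, and in particular the translation of every $\rho_i$ without spending extra variables, is the delicate point. A secondary subtlety is ensuring that the bundling step remains valid on the finite side, which is where the complementation and finiteness of the frame section, and the finiteness of $R$, are critical.
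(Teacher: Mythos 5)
There is a genuine gap, and it lies at the heart of the argument: the ``bundling device'' you propose for folding the frame and relation hypotheses into a single identity does not work in modular lattices. You want a term $\gamma(\bar x,\bar y)$ that collapses to the bottom of a section when all hypotheses $p_j=q_j$ hold and is ``forced to the top'' when any hypothesis fails, so that $\tau_w+\gamma=\epsilon+\gamma$ is equivalent to the quasi-identity. Modular lattices admit no such discriminator: a failure of $p_j=q_j$ can be arbitrarily small, leaving $\gamma$ barely above the bottom, in which case $\tau_w+\gamma=\epsilon+\gamma$ can fail in a lattice that satisfies none of the hypotheses --- so the identity would not be valid in all finite modular lattices even when $w(\bar a)=e$ holds in all relevant finite groups, breaking one direction of the reduction. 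The mechanism the paper (following Freese) actually uses is \emph{projectivity of the presentation}: the hypotheses are packaged as an $n$-tower of skew $(4,3)$-frames, a presentation in $n+6$ generators shown projective within $\mathcal{M}$ (Theorem~\ref{4tower}), with witnessing terms $t_i^{\Pi}(\bar x)$ whose values in \emph{any} modular lattice form a model of the presentation and which fix every existing model. Substituting these terms makes the antecedent automatically true, and Lemma~\ref{reduction} then converts the quasi-identity to an identity with no case split on whether hypotheses hold.

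A second, related gap is that you cannot force a bare lattice variable $x_i$ to be (the graph of) an invertible coordinate-ring element satisfying the group relations; this is exactly what the stability and reduction machinery is for. The paper allots one skew frame of the tower to each group generator, reduces that frame to characteristic $p\times p$ (Lemma~\ref{charp}) to extract a term $g^+(\Psi^i)$ that is provably $3$-stable (Lemma~\ref{stabpp}), hence a unit, and then performs $h$ further upper reductions to force the relations $w_j(\bar s)=c'^n_{13}$ (Section~\ref{7}); stability is what survives reduction (Fact~\ref{stab}, Lemma~\ref{stabgp}). Finally, on the model side a faithful $\mathbb{F}_pG$-module's subspace lattice does not suffice to falsify the resulting identity: the counterexample lattice must carry a tower in which the stable terms evaluate to the images of the $g_i$, and the paper obtains this only via the Dilworth--Hall glueing of group-ring submodule lattices (Theorem~\ref{const}); as remarked in Section~\ref{5.4}, these models are necessarily non-Arguesian and do not embed into subspace lattices. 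Subspace lattices are adequate for the Restricted Word Problem reduction, but not for the reduction to identities.
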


 Thm~\ref{thm} adapts  to all classes of finite
modular lattices containing the particular ones
constructed  in Thm.~\ref{const}  from   groups in a class
with unsolvable restricted word problem.

In a recent related result, K\"uhne and Yashfe \cite{mat}
show  that there is no  algorithm 
to decide, for any  finite geometric lattice
$L$ with dimension function $\delta$, whether 
there is a join            embedding  
$\varepsilon$ of $L$ into the subspace lattice of some vector
space
(over fields from any specified class)
such that, for some          $c\in \mathbb{N}$,
 $\dim \varepsilon(x)= c \cdot\delta(x)$ for all $x\in L$. \\

Concerning the proof of Thm.~\ref{thm},
on the model side, the construction given in \cite[Section 4]{fm4} 
is extended combining $n$ skew frames
 (into a ``tower'') to deal with $n$  group generators (Section~\ref{6}).
On the formal side, for convenience,  we first capture towers 
by  presentations projective within modular lattices (Section~\ref{3}).
 Reductions of towers and stable elements  are discussed in Section~\ref{4},
coordinates and Freese's method of forcing group relations in 
Section~\ref{5}.
Finally, in Section~\ref{7}
 reduction according to \cite[Lemma 9]{fm4}
 is used to turn each 
  skew frame of a tower, one at a time,  into one of characteristic
$p\times p$ and to obtain the stable element provided by \cite[Cor. 13]{fm4}.  
Stable elements associated with other skew frames
 will be transformed
(thanks to Ralph Freese) into stable elements, again, 
 and it does not matter that  characteristic $p\times p$ 
is (supposedly) lost.
 For easy reference,
a  description of the (well known) 
general  method is given in Section~\ref{2}.

\section{Presentations and 
reduction  to identities}\lab{2}

\subsection{Presentations}
Given a similarity type  of algebraic structures,
we fix a variety $\mc{T}$  
with solvable word problem for free algebras
$F\mc{T}(\bar x)$ in 
free generators $\bar x=(x_1,\ldots ,x_n)$,
called \emph{variables}.
Elements $t(\bar x)$
will be called \emph{terms}.
For binary operator symbols, say $+$,
we write $s+\bar t$ to denote the list $(s+t_1, \ldots ,s+t_n)$. 

Due to the solvability of the word problem there
is an algorithm to decide, for any terms $t(\bar x),
s(\bar x)$ in the absolutely free algebra,
whether the \emph{identity}  $t(\bar x)=s(\bar x)$
is valid in $\mc{T}$, i.e.
whether the terms denote the same
element of $F\mc{T}(\bar x)$.
 This applies also to expansions by new constants.

To simplify notation,
 a list   $(t_1(\bar x), \ldots, t_n(\bar x))$ 
is also written as $\bar t(\bar x)$ 
and $\bar t(\bar x)|_m$  stands for
$(t_1(\bar x), \ldots t_m(\bar x))$ where $m \leq n$.
Also, we write $\bar 
t(\bar u(\bar y))=\bar t(u_1(\bar y), ,\ldots ,u_n(\bar y))$
and the like.

Given 
a list $\bar c$ of (pairwise distinct) new constants, called
\emph{generator symbols}, a \emph{relation}  $\rho(\bar c)$
is an expression $t(\bar c)=s(\bar c )$
where $t(\bar x)$ and $s(\bar x)$ 
are terms.
A  (finite) \emph{presentation}  $\Pi$ (also written as $(\Pi,\bar c)$)
is then given by $\bar c$ and a finite
set of relations $\rho(\bar c)$.
Constant (i.e. variable free) terms 
in the language expanded by $\bar c$ are also referred to as
``terms over $\Pi$''.

A relation $\rho(\bar c)$, as above, is \emph{satisfied} 
by $\bar a=(a_1,\ldots, a_n)$
in  $A\in \mc{T}$ if 
$t(\bar a)=s(\bar a)$, we write $A\models \rho(\bar a)$.
 $(A,\bar a)$ 
is a \emph{model} of $\Pi$, written as $A\models \Pi$, 
if all relations of $\Pi$ are satisfied in $A$.
Par abuse de  language we also say that $\bar a$ is a $\Pi$ in $A$
and we use $\bar c$ to denote $\bar a$.

In the sequel, let $\mc{A}\subseteq \mc{T}$ denote
a class of algebraic structures
closed under subalgebras.
 We say that $(A,\bar a)$ \emph{is
in} $\mc{A}$ if $A\in \mc{A}$. 
A relation $\rho(\bar c)$
is a \emph{consequence} of $\Pi$ in $\mc{A}$, also
 \emph{implied} by $\Pi$ in $\mc{A}$,
if $A\models \rho(\bar a)$ 
for all models $(A,\bar a)$ of $\Pi$ in $\mc{A}$. 
The \emph{Restricted Word Problem}
for $\mc{A}$ is \emph{unsolvable}
if there is a presentation $(\Pi,\bar c)$
such there is no algorithm
to decide, for any relation $\rho(\bar c)$, 
whether $\rho(\bar c)$ is a consequence of $(\Pi,\bar c)$ within 
$\mc{A}$.

\subsection{Transformations and strengthening}
A  \emph{transformation} within $\mc{A}$ 
of $\Pi$ to a presentation $\Psi$ 
in  generator symbols $\bar d=(d_1, \ldots ,d_m)$
  is given by 
a list of terms $u_j(\bar x)$, $j=1, \ldots, m$,
such that  one has $(A,(u_1(\bar a), \ldots, u_m(\bar a))$
a model of $\Psi$ 
for each model $(A,\bar a)$ of $\Pi$ in $\mc{A}$.
The \emph{composition} with a further transformation $\Psi$ to $\Phi$,
given by the $v_k(\bar y)$, is the transformation  obtained by the terms
$v_k(u_1(\bar x), \ldots ,u_m(\bar x))$. 
Thus, one  obtains transformations by iterated composition.
The presentations $\Pi$ and $\Psi$
are \emph{equivalent} within $\mc{A}$ if, in the above,
one has $\Phi=\Pi$ and
  $A\models \bar v(\bar u(\bar a))=\bar a$,
and $B \models \bar u(\bar v(\bar b))=\bar b$ 
for all models $(A,\bar a)$ of $\Pi$ and $(B,\bar b)$ of $\Psi$.
In particular, if $\Psi$ is obtained from $\Phi$ 
adding generators (that is, $m>n$ and
$c_i=d_i$ for $i\leq n$) and relations then $\Pi$ and $\Psi$
are equivalent within $\mc{A}$ if and only if
there is a transformation of $\Phi$ to $\Psi$
within $\mc{A}$ such that  $u_j=x_j$ for $j \leq n$.

Consider presentations $\Pi$ and $\Pi^+$ in the same generator
symbols $\bar c=(c_1,\ldots ,c_n)$.
A transformation from $\Pi$ to $\Pi^+$ within $\mc{A}$
 given by  terms $u_i(\bar x)$, $i=1,\ldots ,n$
(also written as $u_{c_i}(\bar x)$  with $x_i=x_{c_i}$)  \emph{strengthens} $\Pi$ to $\Pi^+$ 
within $\mc{A}$ if the following hold.
\begin{enumerate}
\item
The relations of $\Pi$ are consequences of $\Pi^+$ within
$\mc{A}$.
\item
 $u_i(\bar a)=a_i$ for $i=1,\ldots ,n$ 
and all models $(A,\bar a)$ of $\Pi^+$ in $ \mc{A}$.
\end{enumerate}
That is, from  any model $(A,\bar a)$ of $\Pi$ in $\mc{A}$
one obtains the model $(A,\bar u(\bar a))$
of $\Pi^+$ while models of $\Pi^+$ remain unchanged.

Considering a model $(A,\bar a)$ of $\Pi$,
it is common use to write also $c_i$ in place of $a_i$,
that is, the generator symbol $c_i$ denotes
the element $a_i$ of $A$. In view of this, 
we use the notation
$c_i:= u_i(\bar c)$
to indicate the terms $u_i(\bar x)$ defining the
strengthening of $\Pi$ 
 to $\Pi^+$ -
without  mentioning $c_i:=c_i$ if $u_i(\bar x)=x_i$.
 In particular
this is done if we construct a sequence of strengthenings 
- which, of course, provides a strengthening of the original
presentation.

\subsection{Projective  presentations}
A presentation 
$\Pi$ is \emph{projective} within  $\mc{A}$ if there are 
(\emph{witnessing}) terms
$t_1^\Pi(\bar x), \ldots , t_n^\Pi(\bar x)$
such that the following hold for all $A\in \mc{A}$
and $\bar a$ in $A$.
\begin{enumerate}
\item  $(A,(t_1^\Pi(\bar a), \ldots , t_n^\Pi(\bar a))\models \Pi$.
\item If $(A,\bar a)\models \Pi$ 
then $t_i^\Pi(\bar a)=a_i$ for all $i$.
\end{enumerate}
Then, of course, $\Pi$ is projective within any $\mc{B} \subseteq \mc{A}$.

\begin{fact}\lab{newgen}
If $\Pi_1$ and $\Pi_2$  are  projective within $\mc{A}$
then so is their disjoint union, e.g. 
if $\Pi_2$ introduces  additional generators, but no relations.
\end{fact}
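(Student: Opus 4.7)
The plan is to construct witnessing terms for $\Pi_1\sqcup\Pi_2$ by concatenating those of $\Pi_1$ and $\Pi_2$. Write the generator lists as $\bar c^{(k)}=(c^{(k)}_1,\ldots,c^{(k)}_{n_k})$ and the witnessing terms as $t^{\Pi_k}_i(\bar x^{(k)})$, for $k=1,2$. The disjoint union then has generator list $\bar c=(\bar c^{(1)},\bar c^{(2)})$ of length $n=n_1+n_2$, and its relations are those of $\Pi_1$ (in the variables $\bar x^{(1)}$) together with those of $\Pi_2$ (in the variables $\bar x^{(2)}$); by disjointness, each such relation mentions only one of the two variable blocks. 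I would set
\[
t^{\Pi_1\sqcup\Pi_2}_i(\bar x^{(1)},\bar x^{(2)})=\begin{cases}t^{\Pi_1}_i(\bar x^{(1)})&\text{if }i\le n_1,\\ t^{\Pi_2}_{i-n_1}(\bar x^{(2)})&\text{if }n_1<i\le n.\end{cases}
\]

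Condition (1) of projectivity is then routine: fix any $\bar a=(\bar a^{(1)},\bar a^{(2)})$ in $A\in\mc{A}$; by the projectivity of $\Pi_k$, the tuple $\bar t^{\Pi_k}(\bar a^{(k)})$ satisfies $\Pi_k$, and since each relation of the disjoint union refers only to one block, the concatenated tuple $\bar t^{\Pi_1\sqcup\Pi_2}(\bar a)$ satisfies all relations of $\Pi_1\sqcup\Pi_2$. For condition (2), suppose $(A,\bar a)\models\Pi_1\sqcup\Pi_2$; the same block separation gives $(A,\bar a^{(k)})\models\Pi_k$ for $k=1,2$, whence $t^{\Pi_k}_i(\bar a^{(k)})=a^{(k)}_i$ for all $i$ by condition (2) applied to $\Pi_k$, i.e.\ $t^{\Pi_1\sqcup\Pi_2}_j(\bar a)=a_j$ for all $j\le n$.

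No serious obstacle arises: the whole argument is a direct concatenation, and the verification merely unfolds the two projectivity hypotheses once each. The special case flagged in the statement — that adjoining free generators to a projective presentation leaves it projective — is the instance in which $\Pi_2$ has empty relation set, for which the trivial terms $t^{\Pi_2}_i(\bar x^{(2)})=x^{(2)}_i$ witness the projectivity of $\Pi_2$, and the above concatenation then yields the desired witnessing terms.
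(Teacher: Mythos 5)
Your proof is correct and is exactly the routine verification the paper leaves implicit: the Fact is stated without proof, and the intended argument is precisely the concatenation of witnessing terms (each block ignoring the other's variables), with both projectivity conditions checked blockwise. The treatment of the relation-free case via identity terms is also the intended reading of the ``e.g.'' clause.
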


\begin{fact}
If $\Pi$ is  strengthened to $\Pi^+$
within $\mc{A}$ then $\Pi^+$ is  projective
 in $\mc{A}$ 
 if so is $\Pi$.
\end{fact}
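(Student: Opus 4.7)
The plan is to exhibit explicit witnessing terms for $\Pi^+$ built by composing the witnessing terms for $\Pi$ with the terms that realize the strengthening. Concretely, if $t_1^\Pi(\bar x), \ldots, t_n^\Pi(\bar x)$ witness projectivity of $\Pi$ and $u_1(\bar x), \ldots, u_n(\bar x)$ define the strengthening of $\Pi$ to $\Pi^+$, I would set
\[
t_i^{\Pi^+}(\bar x) \;:=\; u_i\bigl(t_1^\Pi(\bar x), \ldots, t_n^\Pi(\bar x)\bigr), \qquad i=1,\ldots,n,
\]
and then verify the two defining conditions (1) and (2) of projectivity.

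For condition (1), given any $A\in\mc{A}$ and any $\bar a$ in $A$, projectivity of $\Pi$ yields $(A,\bar t^\Pi(\bar a))\models \Pi$. Since the $u_i$ define a transformation from $\Pi$ to $\Pi^+$ within $\mc{A}$, applying this transformation to the model $(A,\bar t^\Pi(\bar a))$ of $\Pi$ produces a model of $\Pi^+$, namely $(A,\bar u(\bar t^\Pi(\bar a)))=(A,\bar t^{\Pi^+}(\bar a))$, as required.

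For condition (2), suppose $(A,\bar a)\models \Pi^+$. By clause (1) of the definition of strengthening the relations of $\Pi$ are consequences of $\Pi^+$ in $\mc{A}$, so $(A,\bar a)\models \Pi$ as well. Projectivity of $\Pi$ then gives $t_i^\Pi(\bar a)=a_i$ for every $i$, and clause (2) of the definition of strengthening gives $u_i(\bar a)=a_i$. Combining these,
\[
t_i^{\Pi^+}(\bar a)=u_i\bigl(t_1^\Pi(\bar a),\ldots,t_n^\Pi(\bar a)\bigr)=u_i(\bar a)=a_i.
\]

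There is no real obstacle here; the only thing to watch is that the transformation clause of strengthening is used on the tuple $\bar t^\Pi(\bar a)$, which is \emph{not} assumed to be a model of $\Pi^+$ (condition (2) of strengthening would not apply to it), but only a model of $\Pi$ — and it is exactly the transformation property that sends models of $\Pi$ to models of $\Pi^+$ that carries through. So both clauses of the definition of strengthening are genuinely used, one in each of the two verifications.
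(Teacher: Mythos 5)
Your proof is correct and is exactly the intended argument: the paper states this as a Fact without proof, and the natural witnessing terms are precisely the composites $u_i(\bar t^\Pi(\bar x))$ you define, with the two clauses of projectivity following from the transformation property and from clauses (1) and (2) of the definition of strengthening just as you verify. Nothing is missing.
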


\begin{fact}
If $\Pi$ is projective in  $\mc{A}$,
with witnessing terms $t_i^\Pi(\bar x)$, then 
the identity \[t(t_1^\Pi(\bar x), \ldots ,t_n^\Pi(\bar x))
=s(t_1^\Pi(\bar x), \ldots ,t_n^\Pi(\bar x))\] 
is valid in $\mc{A}$ if and only if
$t(\bar a)= s(\bar a)$ for all models 
$(A,\bar a)$ of $\Pi$ in  $\mc{A}$.
\end{fact}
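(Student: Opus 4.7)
The plan is to verify each direction by plugging into exactly one of the two clauses of the projectivity definition; both directions are essentially one-line arguments, and the only real content is matching symbols to the definition.

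For the forward direction, assume the identity $t(t_1^\Pi(\bar x),\ldots,t_n^\Pi(\bar x)) = s(t_1^\Pi(\bar x),\ldots,t_n^\Pi(\bar x))$ is valid in $\mc{A}$, and let $(A,\bar a)$ be a model of $\Pi$ in $\mc{A}$. By clause (2) of projectivity, $t_i^\Pi(\bar a) = a_i$ for each $i$, so substituting $\bar a$ for $\bar x$ in the identity yields $t(\bar a) = s(\bar a)$. This handles the ``only if''.

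For the backward direction, assume $t(\bar a) = s(\bar a)$ for every model $(A,\bar a)$ of $\Pi$ in $\mc{A}$. Let $A \in \mc{A}$ and let $\bar a$ be any tuple in $A$ (not necessarily a model of $\Pi$). Set $\bar b := (t_1^\Pi(\bar a),\ldots,t_n^\Pi(\bar a))$. By clause (1) of projectivity, $(A,\bar b)$ is a model of $\Pi$ in $\mc{A}$; hence by hypothesis $t(\bar b) = s(\bar b)$, which is precisely the required instance of the identity at $\bar a$. Since $A$ and $\bar a$ were arbitrary, the identity is valid in $\mc{A}$.

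The only possible pitfall is notational: one must be careful that in the backward direction $\bar a$ ranges over all tuples in $A$, while in the hypothesis $\bar a$ ranges only over tuples satisfying $\Pi$; the trick is that the witnessing terms $t_i^\Pi$ turn the former into the latter by clause (1). There is no real obstacle beyond this bookkeeping.
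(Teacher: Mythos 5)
Your proof is correct and is exactly the intended argument: the paper states this as a Fact without proof, and the two directions follow immediately from clauses (2) and (1) of the projectivity definition, respectively, just as you write. Nothing further is needed.
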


Now, assume that $\mc{A}$ is a variety, i.e. an equationally definable
class. 
 Then for each presentation $(\Pi,\bar c)$ 
one has ``the''  algebra $F\mc{A}(\Pi,\bar c)$
in $\mc{A}$ freely generated  
by $\bar c$ 
  under the relations $\Pi$;
here,  $\bar c$ 
also denotes its image under the canonical homomorphism.
This algebra is projective within $\mc{A}$ if and only
if so is the presentation $(\Pi,\bar c)$.

Strengthening the  presentation $(\Pi,\bar c)$ 
to $\Pi^+$ with additional relation $s(\bar c)=t(\bar c)$
 then means
to provide $\bar b$ in 
$F\mc{A}(\Pi,\bar c)$ such that $s(\bar b) =t(\bar b)$ 
and $\phi(\bar b)=\phi(\bar c)$
for all
  $A \in \mc{A}$ and homomorphisms 
$\phi:F\mc{A}(\Pi,\bar c)\to A$  
such that $s(\phi(\bar c))=t(\phi(\bar c))$.

\subsection{Reducing quasi-identities to identities}\lab{2.6}
Given a signature, an \emph{identity} or \emph{equation}
is a sentence of the form $\forall \bar x.\; t(\bar x)=s(\bar x)$,
a \emph{quasi-identity} a sentence of the form
$\forall \bar y.\; \alpha(\bar y) \Rightarrow t(\bar y)=s(\bar y)$
with \emph{antecedent}
$\alpha(\bar y)\equiv \bigwedge_i t_i(\bar y)=s_i(\bar y)$;
here $t(\bar x)$, $s(\bar x)$, $t_i(\bar y)$, and $s_i(\bar y)$ are terms.
Observe that, replacing variables by new constants,
$\alpha$ is turned into a presentation.

Consider classes $\mc{M}_0$ and $\mc{G}_0$ 
of algebraic structures in not necessarily distinct 
signatures, both closed under subalgebras.
The task is to \emph{reduce} quasi-identities for $\mc{G}_0$
to equations for $\mc{M}_0$; that is,
given  a set $\Lambda$ of
quasi-identities  in the language of $\mc{G}_0$
 to 
construct an algorithm associating with each 
$\beta \in\Lambda$
 an equation $\beta^*$ in the language of $\mc{M}_0$
such that $\beta$ holds in $\mc{G}_0$ if and only 
if $\beta^*$ holds in $\mc{M}_0$.

In the sequel we describe the general structure of such
algorithm to be applied to the case where $\mc{G}_0$
is the class of all finite groups, $\mc{M}_0$ the class
of all finite modular lattices.
Fix a set $\Lambda_0$
of formulas 
 $\alpha(\bar y)\equiv  \bigwedge_{i=1}^h w_i(\bar y)= v_i(\bar y)$,  $\bar y=(y_1, \ldots ,y_{n_\alpha})$, 
in the language of $\mc{G}_0$.

{\bf Hypothesis}: 
 There is an algorithm
which constructs the following in the language of $\mc{M}_0$.
\begin{itemize}
\item[(a)]
For any  given $\alpha \in \Lambda_0$
a  presentation $(\Pi,\bar c)$ with 
$\bar c=(c_1,\ldots ,c_N)$ and
terms $\bar u=(u_1,\ldots, u_N)$ with $N:=N_\alpha \geq n:= n_\alpha$
\item[(b)]
 For each $r$-ary  operation symbol $f$ 
of $\mc{G}_0$,   a term $f^\#(\bar z, \bar x)$, $\bar z=(z_1,\ldots ,z_r)$
where $\bar x=(x_1, \ldots ,x_N)$.
\end{itemize}
Now, for a formula $\gamma(\bar y)$ in the language of $\mc{G}_0$, 
the translation according to (b)  into a formula 
in the language of $\mc{M}_0$ is denoted by $\gamma^\#(\bar y,\bar x)$
and the following are required.
\begin{itemize}
\item[(i)] $(\Pi,\bar c)$ is projective for $\mc{M}_0$
with witnessing terms $\bar t$.
\item[(ii)] If $(L,\bar a)$ is a model of $(\Pi,\bar c)$ 
in $\mc{M}_0$ then so is $(L,\bar u(\bar a))$.
\item[(iii)] For any $\alpha \in \Lambda_0$,
 $(\Pi,\bar c)$ implies $\alpha^\#(\bar u(\bar c)|_n),\bar u(\bar c))$ in $\mc{M}_0$.
\item[(iv)] For any model $(L,\bar a)$ of $(\Pi,\bar c)$
with $L\in \mc{M}_0$, the algebra $G=G(L,\bar u|_n(\bar a))$ generated by 
$\bar u|_n(\bar a)$
under the operations $\bar b \mapsto f^\#(\bar b,\bar u|_n (\bar a))$,
$f$ an operation symbol of $\mc{G}_0$, is a member of $\mc{G}_0$ 
and $(G,\bar u|_n(\bar a))\models \alpha(\bar u|_n(\bar a))$.
\item[(v)] For any $\alpha\in \Lambda_0$ and $G\in \mc{G}_0$, 
with generators $\bar g=(g_1, \ldots ,g_n)$ 
such that $G\models \alpha(\bar g)$,
there is a model  $(L(G,\bar g),\bar a)$ of $(\Pi,\bar c)$
with $L(G,\bar g)  \in \mc{M}_0$ 
and $\bar u(\bar a) =\bar a$ and, moreover, such that 
there is an embedding $\omega:G \to G(L(G,\bar g),\bar a)$  
with $\omega(g_i)=a_i$ for $i=1,\ldots, N$.
\end{itemize}

\begin{lem}\lab{reduction}
Given an algorithm satisfying the above hypothesis, there is
an algorithm associating with any quasi-identity $\beta$,
with antecedent $\alpha\in \Lambda_0$
in the language of $\mc{G}_0$, 
an equation $\beta^*$ in the language of $\mc{M}_0$
such that $\beta$ holds in $\mc{G}_0$ 
if and only if $\beta^*$ holds in $\mc{M}_0$.

In particular, if a presentation $\Psi$ in $n$ generators   in the language of
$\mc{G}_0$ is given, then the restricted word problem 
for $\Psi$ within $\mc{G}_0$ reduces
to the decision problem for $N$-variable identities within $\mc{M}_0$
where  $N$ is the number of generators
in the presentation $(\Pi,\bar c)$,  required in (a) above.
\end{lem}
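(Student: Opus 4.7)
The plan is to let $\beta^*$ encode the conclusion $t(\bar y)=s(\bar y)$ inside the modular lattice by pushing the witnessing terms $\bar t$ first through $\bar u$ and then through the group-operation translation $(-)^\#$. Concretely, given $\beta\equiv \forall \bar y.\;\alpha(\bar y)\Rightarrow t(\bar y)=s(\bar y)$ with $\alpha\in \Lambda_0$, take $(\Pi,\bar c)$, $\bar u$ and the witnessing terms $\bar t$ supplied by the hypothesis and set
\[
\beta^*(\bar x)\equiv\; t^\#(\bar u|_n(\bar t(\bar x)),\bar u(\bar t(\bar x)))=s^\#(\bar u|_n(\bar t(\bar x)),\bar u(\bar t(\bar x))).
\]
This is an identity in $N$ variables in the language of $\mc{M}_0$, and its construction is algorithmic in $\beta$, so the final clause of the lemma (about the restricted word problem and $N$-variable identities) is immediate once the equivalence is proved.

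For $\mc{G}_0\models\beta\Rightarrow \mc{M}_0\models \beta^*$, fix $L\in \mc{M}_0$ and $\bar x$ in $L$, and set $\bar a=\bar t(\bar x)$. Property (i) gives $(L,\bar a)\models \Pi$, and (ii) gives $(L,\bar u(\bar a))\models \Pi$. Applying (iv) to $(L,\bar a)$ produces a group $G\in \mc{G}_0$, generated by $\bar u|_n(\bar a)$ with operations encoded by $f^\#(-,\bar u(\bar a))$, in which $\alpha$ holds on those generators. Since $\beta$ is assumed valid in $\mc{G}_0$, the conclusion $t=s$ holds on those generators in $G$; translating this back through $(-)^\#$ yields exactly the equation $\beta^*(\bar x)$ inside $L$.

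For the converse, let $G\in \mc{G}_0$ and $\bar g$ satisfy $\alpha(\bar g)$. By (v), pick $(L(G,\bar g),\bar a)\models \Pi$ with $L(G,\bar g)\in \mc{M}_0$, $\bar u(\bar a)=\bar a$, and an embedding $\omega$ of $G$ into $G(L(G,\bar g),\bar u|_n(\bar a))$ with $\omega(g_i)=a_i$. Evaluate $\beta^*$ at $\bar x:=\bar a$: since $(L,\bar a)\models \Pi$, projectivity forces $\bar t(\bar a)=\bar a$, and together with $\bar u(\bar a)=\bar a$ the expression $\beta^*(\bar a)$ collapses to $t^\#(\bar a|_n,\bar a)=s^\#(\bar a|_n,\bar a)$. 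By construction of $(-)^\#$, this equation holding in $L$ is exactly the assertion $t(\bar a|_n)=s(\bar a|_n)$ in the group $G(L(G,\bar g),\bar u|_n(\bar a))$, and pulling back along the embedding $\omega$ (which sends $\bar g$ to $\bar a|_n$) yields $t(\bar g)=s(\bar g)$ in $G$, as required.

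No step is individually deep; the main obstacle is bookkeeping. One has to keep the distinction between the full parameter list $\bar u(\bar c)$ used inside $f^\#$ and its prefix $\bar u|_n(\bar c)$ playing the role of group generators, and one has to see why the composition $\bar u\circ \bar t$ is needed in the definition of $\beta^*$: the $\bar t$ makes $\beta^*$ well defined on arbitrary $\bar x\in L$ (projectivity produces a $\Pi$-model), and the outer $\bar u$ then moves to a model in the form required by (iv), which supplies the group of $\mc{G}_0$ in which $\beta$ can be applied.
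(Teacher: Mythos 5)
Your proposal is correct and follows essentially the same route as the paper: the same definition of $\beta^*$ via $w^\#(\bar u|_n(\bar t(\bar x)),\bar u(\bar t(\bar x)))$, the same use of (i)--(ii) and (iv) to produce a group of $\mc{G}_0$ for the direction $\mc{G}_0\models\beta\Rightarrow\mc{M}_0\models\beta^*$, and the same use of (v) together with the collapse $\bar t(\bar a)=\bar a$, $\bar u(\bar a)=\bar a$ and the embedding $\omega$ for the converse. The only cosmetic difference is that you bypass the explicit appeal to (iii), which is subsumed by the statement of (iv) as given.
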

\begin{proof}
By (b), there is an algorithm associating, uniformly for all $n,N$
$(n \leq N$),
 with any term
$w(\bar y)$ in the language of $\mc{G}_0$  a term $w^\#(\bar y,\bar x)$
in the language of $\mc{M}_0$ such that $y_i^\#(\bar y,\bar x) =y_i$
and \[(f(w_1(\bar y), \ldots,w_n(\bar y)))^\#(\bar y,\bar x)
=f^\#(w_1^\#(\bar y,\bar x), \ldots ,w_n^\#(\bar y,\bar x)).\]  
Now, 
given $\beta \equiv \forall y.(\alpha(\bar y) \Rightarrow w(\bar y)=v(\bar y))$
where $\alpha \in \Lambda_0$, 
 let $\beta^*$ denote the identity $\forall \bar x.\;
\gamma(\bar x)$ where $\gamma(\bar x)$ denotes 
\[
w^\#(u_1(\bar t(\bar x)), \ldots , u_n(\bar t(\bar x)), \bar u(\bar t(\bar x))  )= \]\[
=v^\#(u_1(\bar t(\bar x)), \ldots , u_n(\bar t(\bar x)), 
\bar u(\bar t(\bar x))  )\]
with $u_i(\bar x)$ according to (a).
Assume that $\beta^*$ holds in $\mc{M}_0$ and consider
$G\in \mc{G}_0$ and $\bar g$ in $G$ such that $G\models \alpha(\bar g)$.
Given  $(L(G,\bar g), \bar a)$ according to (v), one has
 $u_i(\bar a)=a_i$ for all $i$ whence, due to validity of $\beta^*$,
\[ \omega(w(\bar g))=w^\#(\bar a|_n,\bar a)=
   w^\#(\bar u(\bar a)|_n), \bar u(\bar a))=\]\[ = v^\#(\bar u(\bar a)|_n,\bar u(\bar a)) 
=v^\#(\bar a|_n,\bar a))= 
\omega(v(\bar g))      \]
and  $w(\bar g)=v(\bar g)$ follows, verifying $\beta$ for $\mc{G}_0$.

Conversely, 
assume that $\beta$ holds for all $G$ in $\mc{G}_0$ 
and consider any $L \in \mc{M}_0$ and $\bar a=(a_1, \ldots ,a_N)$
in $L$. That $(L,\bar u(\bar t(\bar a)))$ is a model of 
$(\Pi, \bar c)$ is obtained combining (i) and (ii)
and by (iii) it follows that $L\models \alpha^\#(\bar u(\bar t(\bar a))|_n,
\bar u(\bar t(\bar a)))$.  
Thus, by  (iv) $G:=G(L, \bar u(\bar t(\bar a))|_n)$ 
is in $\mc{G}_0$ and  
 $\alpha(\bar u(\bar t(\bar a))|_n)$ 
holds in $G$.
Now, the quasi-identity $\beta$
being valid in $\mc{G}_0$, 
it follows  
 $G\models \gamma(\bar u(\bar t (\bar a))|_n, 
\bar u(\bar t(\bar a)))$; that is, the identity $\beta^*$ 
holds in $L$ for the substitution $\bar a$..
\end{proof}

\section{Some projective  modular lattice presentations}\lab{3}

\subsection{Terms and lattices}
For concepts of lattice theory we refer to Birkhoff 
\cite{birk}, for modular lattices also von Neumann \cite{neu}. For better readability, joins and meets will
be written as $x+y$ and $x\cdot y=xy$,
 assuming associativity, commutativity. and idempotency
for both operations. 
That is, the term algebra $F\mc{T}(\bar x)$
is the free algebra in the variety $\mc{T}$ 
of algebras $(A,+, \cdot)$ where $(A,+)$ and $(A,\cdot)$ are
commutative idempotent monoids.
Thus, the word problem for free algebras in $\mc{T}$
has a (simple) solution
and we may use expressions   $\sum_i a_i$ and $\prod_i a_i$
- to be read as $(\sum_i a_i)$ and $(\prod_i a_i)$, respectively.
 For convenience, we also  use the 
rule that  $s\cdot t +u=st+u$ reads as  $(st)+u$.

A \emph{lattice} $L$ is a member of $\mc{T}$
which satisfies the \emph{absorption laws}
\[x(x+y)=y \mbox{ and } x+xy=x.\] For lattices, $x \leq y \Leftrightarrow
x=xy$ (we also write $y\geq x$) 
defines a partial order $\leq$ and one has $a \leq b$
if and only if $a=a+b$. 
 With respect to this partial order, $a+b$ is the supremum,
$ab$ the infimum of $a,b$. 
If $L$ has a smallest resp. greatest element these 
will be denoted by $\bot^L$ and $\top^L$, respectively.
A set  $\{a_1,\ldots ,a_n\}$ in $L$ such that $\sum_i a_i =\top^L$ and
$\prod_i a_i =\bot^L$ will be called \emph{spanning} in $L$.
In particular, this applies if $L$ is generated by  $a_1, \ldots ,a_n$. 
For $a\leq b$ in $L$, the \emph{interval} $[a,b]=\{c\in L\mid a\leq c\leq b]$
is a sublattice of $L$; an \emph{ideal} 
of $L$ is a sublattice $I$ such that $b\in I$ for any $b \leq a \in I$. 
 The word problem for free lattices is
well known to be solvable, but for simplicity 
 we prefer to consider terms
in $\mc{T}$.

A \emph{chain} $C_n$ of length $n$
is a presentation with generators $d_i$, $i=0, \ldots, n$,
and relations $d_i \leq d_{i+1}$, $i <n$.
Obviously, chains are projective within the class of all lattices.

\subsection{Modular lattices}
A lattice is \emph{modular}
if it satisfies the identity $x(y+xz)= xy+xz$,
equivalently, if $a(b+c)=ab+c$ for all $c\leq a$.
The class of all modular lattices is denoted by $\mc{M}$.
 \emph{Projectivity} of
presentations will always refer to $\mc{M}$.
Examples of modular lattices are the lattices $L(_RM)$  of all submodules
of $R$-modules, with operations $+$ and $\cap$.

\begin{fact}\lab{mod}
In a modular lattice, $x \mapsto x+b$ is an
isomorphism of $[ab,a]$ onto $[b,a+b]$ with inverse 
$y\mapsto ya$. 
\end{fact}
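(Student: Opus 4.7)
The plan is to prove this classical ``diamond isomorphism'' in three short steps: well-definedness of the two maps on the claimed intervals, the fact that they are mutually inverse (this is where modularity is used), and monotonicity (which upgrades an order-bijection to a lattice isomorphism).

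First I would check well-definedness. For $x\in[ab,a]$, monotonicity of $+$ gives $b=ab+b\le x+b\le a+b$, so $x+b\in[b,a+b]$. Dually, for $y\in[b,a+b]$, monotonicity of the meet gives $ab\le ya\le a+b$... more precisely, $ab\le ya\le a$, so $ya\in[ab,a]$.

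Next comes the heart of the argument, where modularity enters twice. For $x\in[ab,a]$, I apply the modular law in the form $a(b+c)=ab+c$ for $c\le a$, with $c=x$: since $x\le a$, we have $(x+b)a = ab+x$, and because $ab\le x$ this equals $x$. For $y\in[b,a+b]$, I apply modularity with $c=b\le y$ to get $(ya)+b = (a+b)y$, and because $y\le a+b$ this equals $y$. So the two maps are mutual inverses.

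Finally, both maps $x\mapsto x+b$ and $y\mapsto ya$ are obviously order-preserving. A mutually inverse pair of order-preserving bijections between lattices is automatically a lattice isomorphism (joins and meets being determined by the order). The only conceptual step is the modular identity; everything else is monotonicity and absorption. I do not foresee any real obstacle: the proof is essentially two applications of the modular law together with the hypotheses $ab\le x\le a$ and $b\le y\le a+b$.
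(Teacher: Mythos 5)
Your proof is correct and is the standard argument for the diamond (Dedekind) isomorphism theorem: two applications of the modular law $a(b+c)=ab+c$ for $c\le a$ show the maps are mutually inverse, and an order isomorphism between lattices is automatically a lattice isomorphism. The paper states Fact~\ref{mod} without proof as a well-known fact, so there is no alternative argument to compare against; your write-up supplies exactly the expected details.
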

Accordingly, we define  $\bar x \nearrow \bar y$     
to stand for the  formula
\[ \bigwedge_{i=1}^n \bigl( y_i =x_i + \prod_{j=1}^n y_j \quad
 \wedge \quad  x_i = y_i \cdot  \sum_{j=1}^n x_j \bigr).
  \] 
Observe that for $x_1\geq x_2$ and $y_1\geq y_2$ 
one has $x_1,x_2 \nearrow y_1,y_2 $ if and only if $x_1 \leq y_1$, $x_2 \leq y_2$,
and $y_2, x_2 \nearrow y_1,x_1$.
Also, $\bar x \nearrow \bar y$ and $\bar y \nearrow \bar z$
jointly imply $\bar x \nearrow \bar z$.
Moreover,,  $\bar x \nearrow \bar y$, $\bar x \nearrow \bar z$
and $\bar y \leq \bar z$ jointly  imply 
$\bar y \nearrow \bar z$.
Writing $\bar x^1 \nearrow \ldots  \nearrow \bar x^m$
we require $\bar x^i \nearrow \bar x^j$ for all $i<j \leq m$.\\

Call elements $a_1, \ldots ,a_n$ of a modular lattices \emph{relatively}
\emph{independent} (\emph{over} $b$)
if $b=a_k\cdot \sum_{i<k} a_i $ for all $1<k\leq n$.
This implies that any permutation of $a_1,\ldots ,a_n$
is independent over $b$, too, and that the $a_i$ 
generate a boolean sublattice $B$ with smallest element $b$ 
and each  $a_i$ is either $b$  or an atoms of  $B$.

\begin{fact}\lab{mod2}
In a modular lattice, if $u,v,w$ are relatively  independent over $t$,
then $(x,y,z) \mapsto x+y+z$ 
defines an embedding of  $[t,u] \times [t,v] \times [t,w]$ 
into $[t,u+v+w]$.
In particular, for $t\leq u'\leq u''\leq u$,
 $t \leq v'\leq v'' \leq v$, and $t \leq w'\leq w'' \leq w$
 the sublattice generated by 
these  $3$ chains
is isomorphic to the direct  product of these chains.
and the above
 embedding restricts to  isomorphisms
$x \mapsto x+v'$ of $[u'+w',u''+w'']$  onto $[u'+v'+w',u''+v'+w'']$ 
 and $y \mapsto y+u'$ of $[v'+w',v''+w'']$ onto $[u'+v'+w', u'+v''+w'']$,
respectively.
\end{fact}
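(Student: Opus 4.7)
The plan is to verify that $\varphi\colon [t,u]\times[t,v]\times[t,w] \to [t, u+v+w]$ defined by $\varphi(x,y,z) = x+y+z$ is a lattice embedding; the ``in particular'' clauses will then follow as routine corollaries of Fact~\ref{mod}. Join preservation is immediate. By the symmetry remarks following the definition of relative independence, we may freely use $uv=vw=uw=t$ and $u(v+w)=v(u+w)=w(u+v)=t$. Injectivity will come from a one-line modular calculation: for $x\leq u$, $y\leq v$, $z\leq w$, the modular law applied with $x\leq u$ gives $u(x+y+z)=x+u(y+z)$, and $u(y+z)\leq u(v+w)=t\leq x$, so $\varphi(x,y,z)\cdot u=x$; symmetric projections recover $y$ and $z$.

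The meet-preservation step is the crux. I would first prove a two-variable sublemma: if $AB=t$ with $p_i\in[t,A]$ and $q_i\in[t,B]$, then $(p_1+q_1)(p_2+q_2)=p_1p_2+q_1q_2$. Write $P$ for the left-hand side. Since $q_i\leq B$, $P\leq (p_1+B)(p_2+B)$; and by Fact~\ref{mod} the interval isomorphism $[t,A]\cong[B,A+B]$ via $x\mapsto x+B$ evaluates $(p_1+B)(p_2+B)$ to $p_1p_2+B$. Hence $P\leq p_1p_2+B$, and since $p_1p_2\leq P$, the modular law yields $P=p_1p_2+PB$. A parallel computation using $p_iB\leq AB=t\leq q_i$ and the modular law with $q_i\leq B$ shows $(p_i+q_i)B=q_i$, whence $PB\leq q_1q_2$; combined with the trivial reverse inequality this gives $PB=q_1q_2$. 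Applying the sublemma first with $(A,B)=(u,v+w)$ and then with $(A,B)=(v,w)$ then yields
\[(x_1+y_1+z_1)(x_2+y_2+z_2)=x_1x_2+(y_1+z_1)(y_2+z_2)=x_1x_2+y_1y_2+z_1z_2,\]
so $\varphi$ preserves meets, and together with injectivity it is a lattice embedding.

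For the remaining assertions, note that in $[t,u]\times[t,v]\times[t,w]$ the three given chains sit inside the coordinate axes, and a short check shows the sublattice they generate in the product is the full direct product of the three chains; pushing this through $\varphi$ identifies it with the asserted sublattice of $[t,u+v+w]$. For the interval isomorphisms, $(u''+w'')v'\leq (u+w)v=t$ by relative independence, so Fact~\ref{mod} applied with $a=u''+w''$ and $b=v'$ gives an isomorphism $[t,u''+w'']\to[v',u''+v'+w'']$ via $x\mapsto x+v'$; restricting to the sub-interval $[u'+w',u''+w'']$ produces the first stated map, and the second follows by symmetric relabeling.

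The principal obstacle is the meet-preservation inequality $P\leq p_1p_2+q_1q_2$ in the two-variable sublemma. The reverse inequality is trivial, but the forward one cannot be concluded from coordinate projections alone, because in a non-distributive modular lattice an element of $[t,A+B]$ need not be recoverable from its meets with $A$ and $B$. The crucial modular-lattice input is the evaluation $(p_1+B)(p_2+B)=p_1p_2+B$, which rests on Fact~\ref{mod}; once that is in hand, iterating the sublemma handles three (or more) summands.
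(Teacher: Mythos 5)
Your proof is correct and follows essentially the same route as the paper, which simply reduces to the two-summand case ($w=0$, $t=0$) and cites it as well known; your two-variable sublemma $(p_1+q_1)(p_2+q_2)=p_1p_2+q_1q_2$ is precisely that classical case, proved in full, and the iteration to three summands plus the applications of Fact~\ref{mod} for the interval isomorphisms are exactly what the paper intends. No gaps.
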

See Fig.~\ref{prodfig}.
For the proof observe, that one can assume $t=0$
and that the case $w=0$ is well known.
The analogous result holds for any number of relatively 
independent elements.

\begin{figure} 
\setlength{\unitlength}{7mm}
\begin{picture}(5,9)(-2,-5)

\put(0,0){\line(-2,1){2}}
\put(0,0){\line(2,-1){6}}
\put(0,1){\line(2,-1){6}}
\put(1,1){\line(2,-1){6}}
\put(1,2){\line(2,-1){6}}

\put(-2,1){\line(-1,-1){3}}
\put(0,1){\line(-2,1){2}}
\put(0,0){\line(2,-1){2}}
\put(0,1){\line(2,-1){2}}
\put(1,1){\line(2,-1){2}}
\put(1,2){\line(2,-1){2}}
\put(-2,2){\line(-1,-1){3}}
\put(0,1){\line(-1,-1){3}}
\put(1,1){\line(-2,1){2}}
\put(1,2){\line(-2,1){2}}
\put(0,0){\line(1,1){1}}
\put(-2,1){\line(1,1){1}}
\put(-2,2){\line(1,1){1}}
\put(0,1){\line(1,1){1}} 
\put(0,0){\line(0,1){1}}
\put(1,1){\line(0,1){1}}
\put(-2,1){\line(0,1){1}}
\put(-1,2){\line(0,1){1}}

\put(-3,-3){\line(0,1){2}}
\put(-3,-3){\line(1,1){.5}}
\put(-5,-2){\line(1,1){.5}}
\put(-3,-2){\line(1,1){.5}}
\put(-5,-1){\line(1,1){.5}}

\put(-3,-3){\line(-2,1){4}}
\put(-3,-3){\line(0,-1){1}}

\put(-3,-2){\line(-2,1){4}}
\put(-3,-3){\line(2,-1){2}}
\put(-3,-2){\line(2,-1){2}}
\put(-5,-2){\line(0,1){2}}
\put(-5,-2){\line(0,-1){1}}
\put(-1,-5){\line(-2,1){6}}
\put(-1,-2){\line(-2,1){6}}
\put(-1,-2){\line(0,-1){3}}
\put(-7,-2){\line(0,1){3}}

\put(6,-3){\line(0,1){2}}
\put(6,-3){\line(0,-1){1}}
\put(6,-3){\line(1,1){2}}
\put(6,-3){\line(-1,-1){1}}
\put(5,-2){\line(1,1){3}}
\put(5,-3){\line(1,1){3}}
\put(5,-5){\line(1,1){3}}
\put(5,-5){\line(0,1){3}}

\put(7,-3){\line(0,1){3}}
\put(8,-2){\line(0,1){3}}

\put(0,0){\line(2,-1){6}}
\put(0,0){\line(-1,-1){3}}

\put(-.8,-5.1){$t$}
\put(-.8,-4.1){$w'$}
\put(-.8,-3.1){$w''$}
\put(-.8,-2.1){$w$}

\put(4.5,-5.1){$t$}
\put(4.3,-4.1){$w'$}
\put(4.3,-3.1){$w''$}
\put(4.5,-2.1){$w$}

\put(-5.7,-3.4){$u''$}

\put(-7.7,-2.4){$u$}
\put(-3.7,-4.4){$u'$}

\put(8.2,-2){$v$}
\put(7.2,-3.3){$v''$}
\put(6.2,-4.3){$v'$}

\put(0.2,0){$u'+v'+w'$}

\put(-.8,2.9){$u''+v''+w''$}

\put(-5.2,1.9){$u''+v'+w''$}
\put(1.2,1.9){$u'+v''+w''$}

\end{picture}
\caption{Direct product of chains}\label{prodfig}
\end{figure}
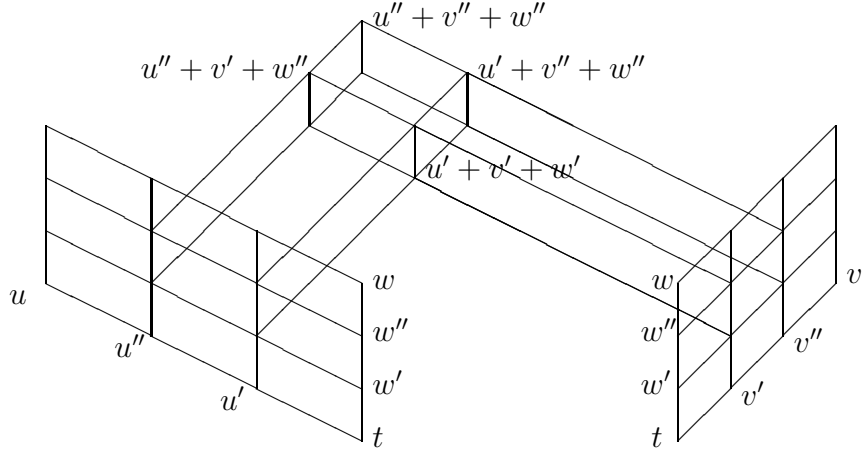.

\subsection{Products of presentations}
Given presentations $(\Pi^j,( \bot,\bar c^j))$, where
$\bar c^j=(c^j_1, \ldots ,c^j_{n_j})$ for $j=1,2$
with pairwise distinct $c_i^j$
and, for $j=1,2$, relations in $\Pi^j$ implying $\bot\leq c_i^j$
for all $i$, 
the \emph{product} $(\Pi^1,(\bot,\bar c^1))\times (\Pi^2,(\bot,\bar c^2))$
is the presentation $(\Pi,(\bot,\bar c))$  with generator symbols 
\[\bot \mbox{ and }\bar c=(c_1^1, \ldots ,c_{n_1}^1, c_1^2, \ldots c_{n_2}^2)\]
and, in addition to the relations of the $(\Pi^j,(\bot,\bar c^j))$,
the relations
\[  \sum_{i=1}^{n_1} c_i^1 \cdot \sum_{k=1}^{n_2} c_k^2
=\bot.  \] 
Fact~\ref{mod2} implies the following well known fact.

\begin{fact}\lab{prod}
Within $\mc{M}$, products of projective presentations
are projective. Moreover, given models $(L_j,(a^j_0,\bar a^j))$
of $(\Pi^j,(\bot,\bar c^j))$ in $\mc{M}$
one has $L_1\times L_2$ a model of the product
with generators mapped to $(a^1_i,a^2_0)$ and $(a^1_0,a^2_k)$,
respectively;
moreover,  any model of the product of the  presentations is isomorphic to such.
\end{fact}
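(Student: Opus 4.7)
The strategy is to obtain both parts of the Fact from Fact~\ref{mod2} applied to the independence relation, and to build the witnessing terms for the product from those of the factors using Fact~\ref{mod}.

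Consider first the ``moreover'' part. The direct direction is a routine check: the natural sublattice embedding of $L_j$ into $L_1\times L_2$ through the $j$th factor preserves all $\Pi^j$-relations, and the independence relation holds by the coordinatewise computation $(\sum_i a^1_i,a^2_0)\cdot(a^1_0,\sum_k a^2_k)=(a^1_0,a^2_0)$. For the converse, let $(L,(b_0,\bar b^1,\bar b^2))$ be any model and set $u^j:=\sum_i b^j_i$. The $\Pi^j$-relation $\bot\leq c^j_i$ forces $b_0\leq b^j_i\leq u^j$, so $L_j:=\langle b_0,\bar b^j\rangle\subseteq [b_0,u^j]$ and $L_j$ models $\Pi^j$; the independence relation gives $u^1 u^2=b_0$, making $u^1,u^2$ relatively independent over $b_0$. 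The two-factor specialization of Fact~\ref{mod2} (take $w=t=b_0$ so that the third factor is trivial) then yields a lattice embedding $\phi\colon[b_0,u^1]\times[b_0,u^2]\to [b_0,u^1+u^2]\subseteq L$ by $(x,y)\mapsto x+y$. Restricting $\phi$ to $L_1\times L_2$ gives an embedding whose image is closed under the lattice operations and contains all of $b_0,\bar b^1,\bar b^2$, hence equals $\langle b_0,\bar b^1,\bar b^2\rangle$; under this identification $(b^1_i,b_0)\mapsto b^1_i$ and $(b_0,b^2_k)\mapsto b^2_k$, as claimed.

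For projectivity of the product, I would combine the witnessing terms $t^j_i$ of the two factors with a correction forcing a common bottom together with the independence relation. Given $(a_0,\bar a^1,\bar a^2)$ in any modular lattice $L$, first compute $b^j_i:=t^j_i(a_0,\bar a^j)$, so that $(b^j_0,\bar b^j)$ models $\Pi^j$. Then choose a common new bottom $\hat b_0$ as a term in the $b^j_i$ and set the adjusted generators $\hat b^j_i:=b^j_i+\hat b_0$. Independence for $(\hat b_0,\hat{\bar b}^1,\hat{\bar b}^2)$ reduces via the modular law to $(u^1+\hat b_0)(u^2+\hat b_0)=\hat b_0$, which can be arranged by absorbing the obstruction $u^1\cdot u^2$ into $\hat b_0$; the $\Pi^j$-relations lift to the new bottom provided the shift $x\mapsto x+\hat b_0$ restricts to a lattice isomorphism $[b^j_0,u^j]\to[\hat b_0,u^j+\hat b_0]$, which by Fact~\ref{mod} amounts to the compatibility $\hat b_0\cdot u^j=b^j_0$ for each $j$. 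Idempotency on an input already modelling the product is automatic, since there $b^j_0=a_0$ meets $u^j$ in $a_0$ and the shift collapses to the identity.

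The main obstacle is choosing $\hat b_0$ so that the two compatibility conditions $\hat b_0\cdot u^j=b^j_0$ hold simultaneously for $j=1,2$: independence alone can be obtained by absorbing $u^1 u^2$ into $\hat b_0$, but $\Pi^j$-preservation under the shift is delicate because the map $x\mapsto x+\hat b_0$ is in general only a join-homomorphism, not a lattice one. Overcoming this is where Fact~\ref{mod} is essential, and the construction of $\hat b_0$ must exploit the specific form of the witnessing terms of $\Pi^1$ and $\Pi^2$ (for instance, the property that $t^j_0$ applied to a model returns the bottom).
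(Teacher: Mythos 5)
Your argument for the ``moreover'' clause is correct and is exactly what the paper has in mind: the product relation makes $u^1=\sum_i b^1_i$ and $u^2=\sum_k b^2_k$ relatively independent over the common bottom, and the two\mbox{-}factor case of Fact~\ref{mod2} identifies the sublattice generated by a model of the product with $L_1\times L_2$, generators matching as claimed.

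The first assertion of the Fact --- that the \emph{product presentation is projective} --- is however not proved, and this is a genuine gap rather than a deferred detail. You reduce it to finding a term $\hat b_0$ with $\hat b_0\cdot u^j=b^j_0$ for $j=1,2$ and $(u^1+\hat b_0)(u^2+\hat b_0)=\hat b_0$, and then state that you do not know how to arrange this; in fact no such $\hat b_0$ can exist in general, so the scheme of replacing every generator by $b^j_i+\hat b_0$ cannot be completed as described. Concretely, take $L=L({}_F F^3)$, let $(b^1_0,\bar b^1)$ be the genuine $2$-frame $0,\langle e_1\rangle,\langle e_2\rangle,\langle e_1-e_2\rangle$ (top $u^1=\langle e_1,e_2\rangle$) and let $(b^2_0,\bar b^2)$ be the chain $0\leq\langle e_1,e_3\rangle$; the witnessing terms of the factors fix this input, yet $u^1u^2=\langle e_1\rangle\neq 0$. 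Independence forces $\hat b_0\geq u^1u^2=\langle e_1\rangle$, which is incompatible with $\hat b_0\cdot u^1=0$, and then the translated triple $\langle e_1\rangle+\hat b_0,\langle e_2\rangle+\hat b_0,\langle e_1-e_2\rangle+\hat b_0$ satisfies the frame relations over $\hat b_0$ only if $\hat b_0\geq u^1$, i.e.\ only by collapsing that factor entirely; your proposal gives no term that does this uniformly while restricting to the identity on genuine models of the product. The known proof (Huhn \cite{huhn}, Freese \cite{fr1}) does not proceed by a pure upward translation: it first \emph{reduces} the generators of the factors, meeting them with terms built from the other factor so that the two tops become disjoint over a common bottom, and only then transposes --- a two\mbox{-}sided operation in the spirit of the paper's reductions $\Phi^b_d$. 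The paper itself offers no proof beyond ``Fact~\ref{mod2} implies this well known fact,'' and the portion you leave open is precisely the portion that does not follow from Fact~\ref{mod2} alone.
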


\subsection{Frames}
Frames have been introduced by von Neumann
\cite{neu} for coordinatizing complemented modular lattices.
Given $n$ independent generators  $e_i$
of an $R$-module $_RM$, the \emph{canonical}
$n$-frame in $L(_RM)$ consists of $a_i=Re_i$, $c_{1j}= R(e_1-e_j)$,
and $a_\bot=\{0\}$. This is mimicked by the following 
presentation.

An  $n$-\emph{frame} $\Phi$ is a lattice presentation
with generators  $a_\bot$,  $a_1, \ldots ,a_n$,
$c_{1j}=c_{j1}$ ($2\leq j\leq n$) and relations
\[\begin{array}{lrclrcl}
(1)&a_\bot &=&a_j(\sum_{i=1}^{j-1}a_i)\\ 
(2)&  a_\bot&=& a_1c_{1j} &=&a_jc_{1j}\\
(3)& a_1+a_j&=& a_1+c_{1j}&=&a_j+c_{1j}\\ \end{array}   \]
where  $2 \leq j \leq  n$. See Fig.~\ref{framefig}

An equivalent presentation is obtained
by replacing $a_\bot$ by $a_1a_2$.
A model (in $\mc{M}$) of (an) $n$-frame is referred to as ``an $n$-frame
in a modular lattice''; otherwise, speaking of ``an $n$-frame''
we mean a presentation as above, possibly with renamed generators.

\begin{figure}
\setlength{\unitlength}{10mm}
\begin{picture}(5,2)(-4,-1.7)

\put(2,-2){\circle*{0.2}}
\put(2,-2){\line(2,1){2}}
\put(2,-2){\line(-2,1){2}}
\put(2,-2){\line(1,1){2}}
\put(2,-2){\line(-1,1){2}}
\put(2,-1){\line(2,1){2}}
\put(2,-1){\line(-2,1){2}}
\put(2,-2){\line(0,1){1}}
\put(0,-1){\line(0,1){1}}
\put(4,-1){\line(0, 1){1}}

\put(1,-1){\circle*{0.2}}
\put(0,-1){\circle*{0.2}}
\put(2,-1){\circle*{0.2}}
\put(3,-1){\circle*{0.2}}
\put(4,-1){\circle*{0.2}}
\put(0,0){\circle*{0.2}}
\put(4,0){\circle*{0.2}}

\put(2.2,-2.2){$a_\bot$}
\put(-.5,-1){$a_2$}
\put(1.2,-1){$c_{12}$}
\put(2.2,-1.2){$a_1   $}
\put(3.2,-1){$c_{13}$}
\put(4.2,-1){$a_3   $}
\end{picture}

\begin{picture}(0,0)(0,0)
\put(-3.6,1){$a_2$}
\put(-1.8,0.9){$c_{12}$}
\put(-.8,0.9){$a_1$}
\put(-1.8,2.1){$a_\top$}
\put(-1.8,-0.1){$a_\bot$}

\put(-2,1){\circle*{0.2}}
\put(-3,1){\circle*{0.2}}
\put(-1,1){\circle*{0.2}}
\put(-2,0){\circle*{0.2}}
\put(-2,2){\circle*{0.2}}
\put(-2,0){\line(-1,1){1}}
\put(-2,0){\line(1,1){1}}
\put(-2,0){\line(0,1){1}}
\put(-3,1){\line(1,1){1}}
\put(-1,1){\line(-1,1){1}}
\put(-2,1){\line(0,1){1}}

\end{picture}
\caption{$2$-frame and $3$-frame}\label{framefig}
\end{figure}
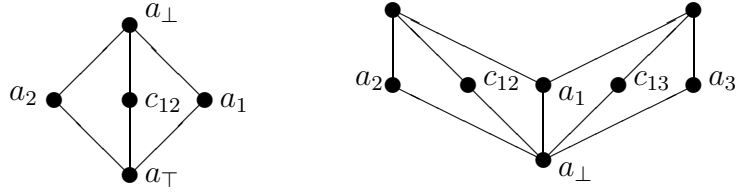

We define $a_\top=\sum_{i=1}^n a_i$ and 
 write also   $a^\top=a_\top^\Phi=\top^\Phi$, $a_i=a_i^\Phi$,  $c_{1j}=c_{1j}^\Phi$, and
$a_\bot=a_\bot^\Phi=\bot^\Phi$.  The list of generators
with indices not involving fixed $k>$ is written as $\Phi_{\neq k}$.
 Observe that  $\Phi$ implies,
within $\mc{M}$,  the relations of $n-1$-frames 
for $\Phi_{\neq k}$ and $a_k+\Phi_{\neq k}$ and that
\[  \Phi_{\neq k} \nearrow
  a_k + \Phi_{\neq k}.\]
Also observe, that  the concept of $n$-frame
can be defined, recursively: 
start  with that of $2$-frame, as defined above;
now,  given the concept  of $n$-frame $\Phi$,  obtain the  $n+1$-frame
$\Phi^+$ adding to the generators and relations of $\Phi$
the generators $a_{n+1}$ and $c_{1,n+1}$
and relations (2) and (3) for $j=n+1$ - renaming
$a_\bot^\Phi$ into $a_\bot^{\Phi^+}$. 
The following is a special case of Dedekind's 
description of $3$-generated modular lattices.

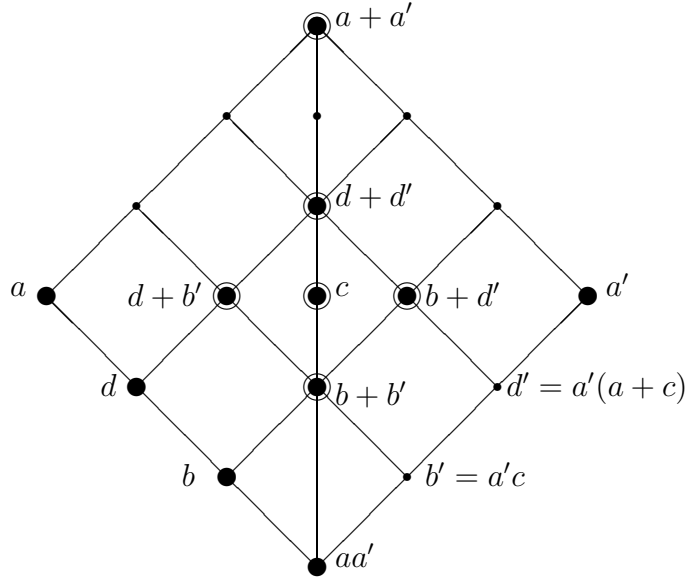
\begin{figure}
\setlength{\unitlength}{12mm}
\begin{picture}(0,6.5)(0,0)  
\put(0,0){\circle*{0.2}}
\put(0,6){\circle*{0.2}}
\put(-3,3){\circle*{0.2}}
\put(3,3){\circle*{0.2}}
\put(0,2){\circle*{0.2}}
\put(0,6){\circle*{0.2}}
\put(1,3){\circle*{0.2}}
\put(-1,3){\circle*{0.2}}
\put(0,4){\circle*{0.2}}
\put(0,3){\circle*{0.2}}
\put(0,2){\circle{0.3}}
\put(0,6){\circle{0.3}}
\put(1,3){\circle{0.3}}
\put(-1,3){\circle{0.3}}
\put(0,4){\circle{0.3}}
\put(0,3){\circle{0.3}}
\put(-2,2){\circle*{0.1}}

\put(1,1){\circle*{0.1}}
\put(2,2){\circle*{0.1}}
\put(-1,1){\circle*{0.2}}
\put(-2,2){\circle*{0.2}}
\put(-2,4){\circle*{0.1}}
\put(-1,5){\circle*{0.1}}
\put(2,4){\circle*{0.1}}
\put(1,5){\circle*{0.1}}
\put(0,0){\line(1,1){3}}
\put(0,0){\line(-1,1){3}}
\put(-1,1){\line(1,1){3}}
\put(1,1){\line(-1,1){3}}
\put(-2,2){\line(1,1){3}}
\put(2,2){\line(-1,1){3}}
\put(-3,3){\line(1,1){3}}
\put(3,3){\line(-1,1){3}}
\put(0,2){\line(0,1){2}}

\put(0.2,6){$a+a'$}
\put(0.2,4){$d+d'$}
\put(0.2,3){$c$}
\put(1.2,2.9){$b+d'$}
\put(-2.1,2.9){$d+b'$}
\put(3.2,3){$a'$}
\put(-3.4,3){$a$}
\put(.2,0){$aa'$}
\put(.2,1.8){$b+b' $}
\put(-1.5,.9){$b$}

\put(2.1,1.9){$d'=a'(a+c)$}
\put(1.2,.9){$b'=a'c$}
\put(-2.4,1.9){$d$}

\end{picture}

\caption{Modular lattice generated by 
$a,a',c$ with $aa'\leq c \leq a+a'$ }\label{dede}
\end{figure}.

\begin{fact}\lab{mod3}
The modular lattice freely generated by $a,a',c$ 
such that $aa' \leq c \leq a+a'$
has diagram given in Fig.~\ref{dede}.
In particular, with 
 $b=ac$ and  $d=a(a' +c)$
one has a $2$-frame $b+ a'c$, $d +a'c$, $b+a'(a+c)$, $c$
and $b,d  \nearrow b+a'c, d+a'c$.
\end{fact}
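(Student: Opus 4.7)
The plan is to reduce the claim to a direct computation in the quotient of Dedekind's free modular lattice $FM(a,a',c)$ by the two absorption relations $c = c + aa'$ and $c = c(a+a')$ (equivalent to $aa' \leq c \leq a+a'$). Since the unrestricted $FM(3)$ is Dedekind's classical $28$-element lattice, these two identifications collapse it to the specific finite lattice of Figure~\ref{dede}; I would derive this by enumerating the canonical normal forms of ternary modular-lattice terms and applying the two absorptions until the list stabilizes. In the resulting diagram, $b = ac$ and $d = a(a'+c)$ appear as comparable elements of $[aa', a]$, with $b \leq d$ since $c \leq a'+c$ gives $ac \leq a(a'+c)$.

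The perspectivity $(b, d) \nearrow (b+a'c, d+a'c)$ then follows by applying Fact~\ref{mod} (the translation isomorphism $x \mapsto x + a'c$) to the interval $[b, d]$. The key computation is $a'c \cdot d = a'c \cdot a(a'+c) = a \cdot a'c = aa'c = aa'$ (using $a'c \leq a'+c$ to collapse, then $aa' \leq c$), together with $aa' \leq ac = b$ (from $aa' \leq a$ and $aa' \leq c$). Modularity then yields $(b+a'c) \cdot d = b + a'c \cdot d = b + aa' = b$ and $(d+a'c) \cdot d = d + aa' = d$, matching the $x_i = y_i \cdot \sum_j x_j$ clause of the definition of $\bar x \nearrow \bar y$; the companion clause $y_i = x_i + \prod_j y_j$ is immediate from $b \leq d$.

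For the $2$-frame formed by $b+a'c$, $d+a'c$, $a'(a+c)$, and $c$, I would verify the frame equations by computing the pairwise meets and joins via modularity. Representative meet computations are $(d+a'c) \cdot c = dc + a'c = ac + a'c = b+a'c$ (using $dc = a(a'+c)c = ac$ from $c \leq a'+c$), $a'(a+c) \cdot c = a'c$ (from $c \leq a+c$), and $(d+a'c) \cdot a'(a+c) = a'c + aa'(a+c)(a'+c) = a'c$ (using $aa' \leq (a+c)(a'+c)$ and then $aa' \leq a'c$ via $aa' \leq a',c$). For the pairwise joins, the modular identity $(a+c)(a'+c) = c + a(a'+c) = c + a'(a+c)$ (modular law with $c \leq a'+c$ and $c \leq a+c$), combined with the consequence $c \leq a(a'+c) + a'(a+c)$ of $c \leq a+a'$ (which one obtains by intersecting $c \leq (a+a')(a'+c) = a' + a(a'+c)$ with $a+c$ and applying modularity once more), yields that all three pairwise joins coincide with $(a+c)(a'+c)$.

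The main obstacle is the first step: deriving Dedekind's explicit finite diagram from the quotient presentation, which requires a careful case analysis over the normal forms of $FM(3)$. Once the diagram is in hand, all the frame identities and the perspectivity reduce to the routine modular manipulations above, with the hypotheses $aa' \leq c \leq a+a'$ serving precisely to absorb residues such as $aa'$ and $aa'(a+c)(a'+c)$ into the intended bottom element of the frame.
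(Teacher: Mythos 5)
Your overall strategy---computing explicitly in the quotient of Dedekind's free modular lattice on three generators and then verifying the frame relations and the perspectivity by modular arithmetic---is essentially what the paper leaves implicit (it simply cites Dedekind's description and Fig.~\ref{dede}), and your verification of $b,d \nearrow b+a'c,\, d+a'c$ and of the three pairwise joins is correct. However, your verification of the $2$-frame breaks down at the meets, and your own computations show it: you obtain $a'(a+c)\cdot c = a'c$ and $(d+a'c)\cdot a'(a+c) = a'c$, whereas the frame relations $a_\bot = a_2c_{12}=a_1a_2$ require both meets to equal $a_\bot = b+a'c = ac+a'c$. In the freely generated lattice $ac \not\leq a'c$ (in Fig.~\ref{dede} the elements $b=ac$ and $b'=a'c$ are distinct atoms over $aa'$; a concrete witness is $a=\langle e_1,e_2\rangle$, $a'=\langle e_3,e_4\rangle$, $c=\langle e_1,e_3,e_2+e_4\rangle$ in $k^4$), so $a'c \neq ac+a'c$ and the quadruple exactly as listed is not a $2$-frame. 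You present these computations as verifications without noticing that two of them refute the relations being verified.

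The resolution is that the third frame element must be $b+a'(a+c)=ac+a'(a+c)$, i.e.\ the element labelled $b+d'$ in Fig.~\ref{dede}: the frame lives in the height-$2$ interval $[b+b',\,d+d']$ (where $b'=a'c$, $d'=a'(a+c)$) with the three midpoints $d+b'$, $c$, $b+d'$, and this corrected element is precisely the $v''+b$ used when the Fact is applied in the proof of Lemma~\ref{3tower}. With that correction the meets do come out right: $(b+d')c = b+d'c = b+a'c$ since $ac\le c$ and $a'(a+c)\cdot c=a'c$, and $(d+b')(b+d') = b+(d+b')d' = b+b'+dd' = b+a'c$ since $dd'=aa'\le a'c$; your join computations remain valid verbatim. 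So the gap is not in the method but in failing to detect, and repair, the mismatch between your computed meets and the required $a_\bot$.
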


\subsection{Reduction}
Given an $n$-frame $\Phi$ and variables $x,y$, put 
\[a_\bot(x,y)\equiv x+\sum_{j>1} a_j(x+c_{1j})
  \mbox{ and } a_\top(x,y)\equiv y+\sum_{j>1} a_j(y+c_{1j})\]
and introduce for each remaining  generator symbol $c$ in $\Phi$ the term
\[\hat{c}(x,y) \equiv   c\cdot a_\top(x,y)+a_\bot(x,y).  \]
Observe that for all models of $\Phi$ 
in a modular lattice $L$  
and $b,d$ in $L$ with  $a_\bot\leq b\leq d \leq a_1$   one has the identity $\hat{c}(b,d)= (c+a_\bot(b,d))\cdot a_\top(b,d)$.
Let $\Phi(x,y)=\Phi_x^y$ denote the list of terms
 $\hat{c}(x,y)$, $c$ a generator symbol of $\Phi$;
this is called the \emph{reduction setup} for $n$-frames.

If $\Phi$ is part of a presentation $\Pi$ and
$b,d$ are terms over $\Pi$ then 
$\Phi_d^b$ is obtained  substituting $b,d$  for $x,y$;
 $\Phi_d^b$ is called the \emph{reduction} of $\Phi$ via $b,d$.
We put $\Phi^b= \Phi^b_{a_\bot}$ and $\Phi_d= \Phi^{a_1}_d$.

If $B \subseteq D$ are left-ideals of the ring $R$,
then the reduction of the canonical $n$-frame
of $L(_RR^n)$ by $b=Be_1 \leq  d=De_1$
 is given by
\[a'_\bot=\sum_{i=1}^n Be_i,\; a'_i=a'_\bot+De_i,\;
c'_{1j}=a'_\bot +D(e_1-e_j),\; a'_\top= \sum_{i=1}^n De_i. \]
See    \cite[Lemma 1.1]{fr2} and Fig.~\ref{figred} for the following.

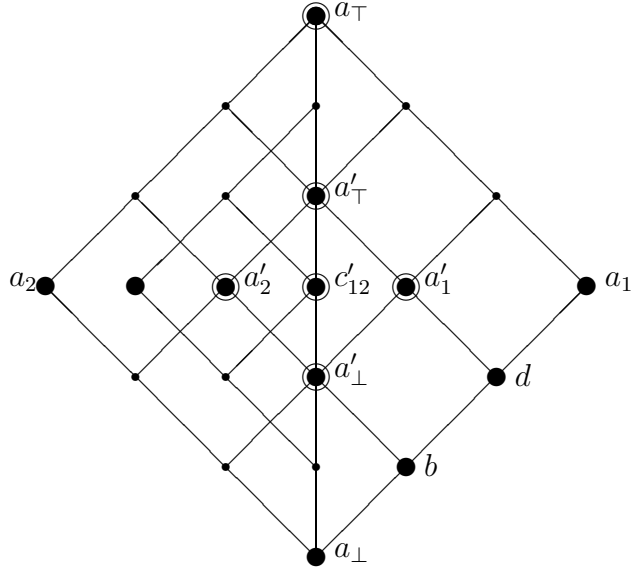
\begin{figure}
\setlength{\unitlength}{12mm}
\begin{picture}(0,7)(0,0)  
\put(0,0){\circle*{0.2}}
\put(0,6){\circle*{0.2}}
\put(-3,3){\circle*{0.2}}
\put(-2,3){\circle*{0.2}}


\put(3,3){\circle*{0.2}}
\put(0,2){\circle*{0.2}}
\put(0,6){\circle*{0.2}}
\put(1,3){\circle*{0.2}}

\put(-1,3){\circle*{0.2}}
\put(0,4){\circle*{0.2}}
\put(0,3){\circle*{0.2}}
\put(0,2){\circle{0.3}}
\put(0,6){\circle{0.3}}
\put(1,3){\circle{0.3}}
\put(-1,3){\circle{0.3}}
\put(0,4){\circle{0.3}}
\put(0,3){\circle{0.3}}
\put(-1,1){\circle*{0.1}}
\put(-2,2){\circle*{0.1}}
\put(1,1){\circle*{0.2}}
\put(2,2){\circle*{0.2}}
\put(0,1){\circle*{0.1}}
\put(0,5){\circle*{0.1}}
\put(-2,4){\circle*{0.1}}
\put(-1,5){\circle*{0.1}}
\put(2,4){\circle*{0.1}}
\put(1,5){\circle*{0.1}}
\put(-1,4){\circle*{0.1}}
\put(-1,2){\circle*{0.1}}
\put(0,1){\line(-1,1){2}}
\put(0,3){\line(-1,1){1}}
\put(0,5){\line(-1,-1){2}}
\put(0,3){\line(-1,-1){1}}
\put(0,0){\line(1,1){3}}
\put(0,0){\line(-1,1){3}}
\put(-1,1){\line(1,1){3}}
\put(1,1){\line(-1,1){3}}
\put(-2,2){\line(1,1){3}}
\put(2,2){\line(-1,1){3}}
\put(-3,3){\line(1,1){3}}
\put(3,3){\line(-1,1){3}}
\put(0,0){\line(0,1){6}}
\put(0.2,6){$a_\top$}
\put(0.2,4){$a'_\top$}
\put(0.2,3){$c'_{12}$}
\put(1.2,3){$a'_1$}
\put(-0.8,3){$a'_2$}
\put(3.2,3){$a_1$}
\put(-2.6,3){$c_{12}$}

\put(-3.5,3){$a_2$}
\put(.2,0){$a_\bot$}
\put(.2,2){$a'_\bot$}
\put(1.2,.9){$b$}
\put(2.2,1.9){$d$}
\end{picture}

\caption{Reduction $2$-frame $(\Phi,a_i,c_{12})$ to $(\Phi^b_d, a'_i,c'_{12})$}\label{figred}
\end{figure}.

\begin{lem}\lab{redframe}
For any  $n$-frame $\Phi$ and $a_\bot\leq  b\leq d \leq a_1$  in a modular lattice, $L$, one has the following
\begin{enumerate}
\item  $\Phi_d^b$ is  an $n$-frame in $L$. 
\item
  $\di ,b \nearrow  (a_1)^{\Phi^b_d},
(a_\bot)^{\Phi^b_d} \nearrow \sum \Phi_\di^b, \sum_{i\neq 1}a_i^{\Phi_\di^b}  
\nearrow$\\ $\nearrow \di + \sum_{i\neq 1}a_i , \bi+ \sum_{i\neq 1}a_i$.
 \item $  a_\top^{\Phi^b_d} \cdot
 \sum \Phi_{\neq 1},\;  a_\bot^{\Phi^b_d } \cdot \sum \Phi_{\neq 1}   
\nearrow (\Phi^b_d)_{\neq 1} ,a_\bot^{\Phi^b_d}$.
\item $(\Phi_{\neq k})_\di^\bi \nearrow (\Phi_\di^\bi)_{\neq k}
\nearrow (a_k)^{\Phi^b_d}+(\Phi_\di^\bi)_{\neq k} \nearrow (a_k+\Phi_{\neq k})_{a_k+\di}^{a_k+\bi}  $\\ for $k>1$.. 
\item If $b=a_\bot$ and $d=a_1$ then $\Phi^b_d=\Phi$.
\end{enumerate}
\end{lem}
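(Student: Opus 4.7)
The overall plan is to realise the reduced frame $\Phi^b_d$ inside a product-of-chains sublattice of $[a_\bot,a_\top]$, where the frame axioms and the perspectivity chains in (2)--(4) become coordinate-wise identities, and to handle the diagonals $c_{1j}$ by a separate $2$-frame argument using Fact~\ref{mod3}. I first introduce the shorthand $b_1:=b$, $d_1:=d$, and for $i>1$, $b_i:=a_i(b+c_{1i})$, $d_i:=a_i(d+c_{1i})$. Using the frame axioms $a_\bot\le c_{1i}$, $a_1c_{1i}=a_ic_{1i}=a_\bot$, $a_1+c_{1i}=a_i+c_{1i}$ and modularity, one checks routinely that $a_\bot\le b_i\le d_i\le a_i$ and that $b\nearrow b_i$, $d\nearrow d_i$ via $c_{1i}$ as single elements. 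By definition, $a_\bot^{\Phi^b_d}=\sum_i b_i$ and $a_\top^{\Phi^b_d}=\sum_i d_i$. Since $a_1,\ldots,a_n$ are relatively independent over $a_\bot$, the $n$-variable form of Fact~\ref{mod2} embeds $\prod_i[a_\bot,a_i]$ into $[a_\bot,a_\top]$ by coordinate sum; the chains $a_\bot\le b_i\le d_i\le a_i$ sit in the respective factors, so every lattice operation among the $b_i$, $d_i$, $a_i$ and their sums reduces to coordinate-wise meet and join inside this distributive sublattice.

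For the $a$-part of (1), distributivity in the product gives $a_i^{\Phi^b_d}=a_i\cdot a_\top^{\Phi^b_d}+a_\bot^{\Phi^b_d}=d_i+\sum_{k\ne i}b_k$, whose product representation has $d_i$ in slot $i$ and $b_k$ in slot $k\ne i$; these $n$ elements are immediately seen to be relatively independent over $a_\bot^{\Phi^b_d}$ and to sum to $a_\top^{\Phi^b_d}$. For a diagonal $c_{1j}$, I first compute $c_{1j}\cdot a_\top^{\Phi^b_d}=c_{1j}\cdot(d+d_j)$: since $a_1+a_j$ and $\sum_{k\ne 1,j}a_k$ are independent over $a_\bot$ and $c_{1j}\le a_1+a_j$, modularity strips off the contributions from $k\ne 1,j$. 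The remaining computation takes place inside the $2$-frame on $\{a_1,a_j,c_{1j}\}$: Fact~\ref{mod3}, applied with $a=a_1$, $a'=a_j$, $c=c_{1j}$ under $a_\bot\le b\le d\le a_1$, describes the sublattice generated by $\{a_1,a_j,c_{1j},b,d\}$ explicitly, from which one reads off $c_{1j}^{\Phi^b_d}\cdot a_1^{\Phi^b_d}=c_{1j}^{\Phi^b_d}\cdot a_j^{\Phi^b_d}=a_\bot^{\Phi^b_d}$ and $a_1^{\Phi^b_d}+c_{1j}^{\Phi^b_d}=a_j^{\Phi^b_d}+c_{1j}^{\Phi^b_d}=a_1^{\Phi^b_d}+a_j^{\Phi^b_d}$. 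Part (5) is a direct substitution: $b=a_\bot$ and $d=a_1$ give $b+c_{1j}=c_{1j}$ and $d+c_{1j}=a_1+a_j$, collapsing the $\hat c$ definitions back to the original generators.

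Parts (2), (3), (4) are chains of perspectivities between tuples; with the product decomposition in hand, each corresponding pair of entries becomes a coordinate-wise identity. For instance, the perspectivity $(d,b)\nearrow(a_1^{\Phi^b_d},a_\bot^{\Phi^b_d})$ in (2) unpacks to four equations such as $b=a_\bot^{\Phi^b_d}\cdot d$, which in product coordinates is just the meet of $(b_1,\ldots,b_n)$ with $(d,a_\bot,\ldots,a_\bot)$. The $c_{1j}$-entries appearing in (3) and (4) are handled by folding in the $2$-frame calculation above and using Fact~\ref{mod} to transport the verified identities along perspective intervals.

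The main technical obstacle is the diagonal step: simplifying $c_{1j}\cdot a_\top^{\Phi^b_d}$ to $c_{1j}\cdot(d+d_j)$ and then verifying the resulting $2$-frame axioms for $c_{1j}^{\Phi^b_d}$ inside the sublattice on $\{a_1,a_j,c_{1j},b,d\}$. Everything else amounts to coordinate-wise bookkeeping in the product of chains.
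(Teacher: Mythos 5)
The paper itself offers no proof of Lemma~\ref{redframe}: it is imported wholesale from Freese \cite[Lemma 1.1]{fr2}, with Fig.~\ref{figred} as the only hint. Measured against that source, your architecture is the right one and is essentially Freese's: use relative independence of the $a_i$ and (the $n$-ary form of) Fact~\ref{mod2} to place the chains $a_\bot\le b_i\le d_i\le a_i$ into a product of chains, read off the $a$-part of the frame axioms and the perspectivity chains in (2)--(4) coordinatewise, and isolate the diagonals as a two-index problem; the simplification $c_{1j}\cdot a_\top^{\Phi^b_d}=c_{1j}(d+d_j)$ via $(a_1+a_j)\cdot\sum_{k\neq 1,j}d_k=a_\bot$ is also correct, as is the substitution argument for (5).

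The genuine gap is exactly the step you flag as the main obstacle. Fact~\ref{mod3} describes the modular lattice generated by the \emph{three} elements $a_1,a_j,c_{1j}$; in that lattice the only available ``$b$'' and ``$d$'' are the specific elements $a_1c_{1j}$ and $a_1(a_j+c_{1j})$. It says nothing about the sublattice generated by $a_1,a_j,c_{1j}$ together with an \emph{arbitrary} chain $a_\bot\le b\le d\le a_1$, so the $2$-frame axioms for $c_{1j}^{\Phi^b_d}$ cannot be read off from it; they must be computed. The computation does go through and is short: with $b_j=a_j(b+c_{1j})$ one has $b_j+c_{1j}=(a_j+c_{1j})(b+c_{1j})=b+c_{1j}$ and $d(b+c_{1j})=d\cdot(b+a_1c_{1j})=b$, whence $(d+b_j)(b+c_{1j})=b+b_j$ and therefore $(d+b_j)\cdot\bigl(c_{1j}(d+d_j)+b+b_j\bigr)=b+b_j+(d+b_j)c_{1j}=b+b_j$; dually $d+c_{1j}(d+d_j)=(d+c_{1j})(d+d_j)=d+d_j$ gives $a_1^{\Phi^b_d}+c_{1j}^{\Phi^b_d}=a_1^{\Phi^b_d}+a_j^{\Phi^b_d}$, and the index $j$ is handled symmetrically using $d_j+c_{1j}=d+c_{1j}$. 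These few identities (the content of Freese's Lemma 1.1 for $n=2$) are what your appeal to Fact~\ref{mod3} has to be replaced by; once they are supplied, the rest of your argument, including transporting the $c$-entries in (3) and (4) along the perspectivities of Fact~\ref{mod}, is sound.
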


\subsection{Towers of $2$-frames}

 An $n$-\emph{tower of $2$-frames}
is a presentation $\Delta(n)=\Delta(2,n)$ 
which is the disjoint  union of $2$-frames $(\Phi^k,a_\bot^k,a^k_i,c^k_{1j})$, $k=1, \ldots ,n,$ 
with the additional relations
\[    a_\top^k, a_2^k \nearrow a_1^{k+1}, a_\bot^{k+1} \mbox{ for } 1\leq k <n. \]
It follows that $a^n_2 a^1_1 =a^1_\bot$ and $a^n_2+a^1_1= a_\top^n$.
Referring to the  reduction setups $\Phi^k(x,y)$
of the $2$-frames $\Phi^k$, define the \emph{reduction setup}
 $\Delta(n)(x,y)$ as the union of the $\Phi^k(x+a_\bot^k,y+a_\bot^k)$
and  the \emph{reduction} $\Delta(n)^b_d=\Delta(n)(b,d)$.
 The following is due to Alan Day  \cite[Thm.5.1]{alan}
\begin{lem}\lab{2tower}
\begin{itemize}
\item[(i)]  Within $\mc{M}$, 
$n$-towers  of $2$-frames are projective.
\item[(ii)]
 $F\mc{M}(\Delta(n))$
is the disjoint union of $5$-element interval sublattices
$\Phi^k\cup  \{a^k_\top\}$.
\item[(iii)]
 $F\mc{M}(\Delta(n))$ is  generated by the $n+2$-elements
$a_1^1, a_2^n, c_{12}^k (1\leq k \leq n)$.
\item[(iv)]
If $\Delta(n)$
is an $n$-tower of $2$-frames in a modular lattice $L$
and if $a^1_\bot \leq b  \leq d\leq a^1_1$
then $\Delta(n)_d^b $
is an $n$-tower of $2$-frames
$a'^k_\bot,a'^k_1,a'^k_2,c'^k_{12}$ in $L$
and with $u=a^n_2\geq u''=ua'^n_2\geq u'= ua'^1_\bot$
and $w=a^1_1\geq w''=d \geq w'=b \geq a^1_\bot$ 
one has $uw=a^1_\bot$, $a'^1_\bot =u'+w'$, and
$a'^n_\top=u''+w''$. Moreover, if $d=a^1_1$ then
$w''=w$ and $u''=u$. If, in addition, $b=a^1_\bot$ then
$\Delta(n)^b_d=\Delta$.
\end{itemize}
\end{lem}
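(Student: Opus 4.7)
Part (i) is the substantive claim; (ii)--(iv) follow by bookkeeping. The plan is to proceed by induction on $n$. For $n = 1$, $\Delta(1)$ is a single $2$-frame, classically projective in $\mc{M}$ (Huhn \cite{huhn}): explicit witness terms project arbitrary $a_\bot, a_1, a_2, c_{12}$ via successive meets and joins to force the diamond relations, and the free modular lattice is $M_3$ with $5$ elements, immediately giving (ii)--(iii) for $n = 1$, while (iv) is a special case of Lemma \ref{redframe}.

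For the inductive step of (i), I would assume $\Delta(n-1)$ is projective with witness terms $\bar t_{n-1}$. Applied to any $\bar a$ in $L \in \mc{M}$, these yield a genuine $(n-1)$-tower, producing canonical $a_\top^{n-1}$ and $a_2^{n-1}$. It remains to project the four generators of $\Phi^n$ so that (a) the linking perspectivity $(a_\top^{n-1}, a_2^{n-1}) \nearrow (a_1^n, a_\bot^n)$ holds, and (b) the $2$-frame relations of $\Phi^n$ hold. Using Fact \ref{mod}, condition (a) amounts to pinning $a_1^n$ above $a_\top^{n-1}$ and $a_\bot^n$ above $a_2^{n-1}$ with prescribed meets and joins, which a short explicit pair of terms $\tilde a_1^n, \tilde a_\bot^n$ achieves; then the standard $2$-frame projection is applied within the freshly defined interval to restore (b). Part (ii) follows by evaluating these witness terms on the free generators: the image collapses to exactly $5n$ elements, organized as $n$ copies of $M_3$ linked by the $\nearrow$-relations. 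For (iii), each non-generator of $F\mc{M}(\Delta(n))$ is recovered from $\{a_1^1, a_2^n, c_{12}^1, \ldots, c_{12}^n\}$ via $a_\top^k = a_\top^{k-1} + c_{12}^k$ (ascending from $a_\top^1 = a_1^1 + c_{12}^1$), $a_\bot^k = a_\bot^{k+1} \cdot c_{12}^k$ (descending from $a_\bot^n = a_2^n c_{12}^n$), $a_1^k = a_\top^{k-1} + a_\bot^k$, and $a_2^k = a_\top^k \cdot a_\bot^{k+1}$, each verified by modularity and the $\nearrow$-relations.

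Part (iv) follows by applying Lemma \ref{redframe} to each $\Phi^k$ with parameters $(b + a_\bot^k, d + a_\bot^k)$ and checking that the linking perspectivities survive the reduction --- they are equational consequences of the tower relations and modularity, hence preserved. The specific arithmetic $uw = a_\bot^1$, $a'^1_\bot = u' + w'$, $a'^n_\top = u'' + w''$ reduces to the identity $a_2^n \cdot a_1^1 = a_\bot^1$, obtained by iterating $a_2^{k+1} \cdot a_\top^k = a_2^k$ from $k = n-1$ down to $k = 1$ (using $a_1^1 \leq a_\top^1 \leq \ldots \leq a_\top^{n-1}$) and ending with $a_2^1 \cdot a_1^1 = a_\bot^1$. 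The main obstacle will be (i): pinning $a_1^n$ above $a_\top^{n-1}$ may disturb the $2$-frame relations of $\Phi^n$, which must then be restored by a subsequent $2$-frame projection within the new interval, so the order of operations and the interdependence with the $(n-1)$-tower witness terms are delicate. All required tools --- modularity, Facts \ref{mod} and \ref{mod2}, and the base-case $2$-frame projection --- are in place, and the construction is essentially Day's.
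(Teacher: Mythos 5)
The paper does not actually prove (i)--(iii): it cites Day \cite{alan} for them and disposes of (iv) in one line from parts (1) and (2) of Lemma~\ref{redframe}. Your treatment of (iii) (the recovery formulas $a_\top^k=a_\top^{k-1}+c_{12}^k$, $a_\bot^k=a_\bot^{k+1}c_{12}^k$, $a_1^k=a_\top^{k-1}+a_\bot^k$, $a_2^k=a_\top^k a_\bot^{k+1}$) and of (iv) (reduce each $\Phi^k$ via $(b+a_\bot^k,d+a_\bot^k)$, which is exactly the paper's reduction setup, and iterate $a_2^{k+1}a_\top^k=a_2^k$ to get $a_2^na_1^1=a_\bot^1$) is sound and consistent with what the paper intends.

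The gap is in (i) and (ii), which is where all the content of Day's theorem lies. For (i), your inductive step asserts that ``a short explicit pair of terms $\tilde a_1^n,\tilde a_\bot^n$ achieves'' the linking perspectivity and that a subsequent $2$-frame projection ``restores'' the frame relations of $\Phi^n$ --- and you then observe yourself that the second operation may disturb the first. That circularity is precisely the problem to be solved, and the proposal does not solve it: one needs an explicit finite sequence of strengthenings each of which provably preserves the relations already secured (this is how the paper itself handles the analogous but harder step for $3$-towers in the proof of Lemma~\ref{3tower}, via Facts~\ref{mod2}, \ref{mod3} and a reduction), or Day's splitting-algebra machinery; no such argument, nor any termination argument, is given. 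For (ii), ``evaluating these witness terms on the free generators'' cannot yield the conclusion, since projectivity says nothing about the size of $F\mc{M}(\Delta(n))$. To show the $5n$ listed elements exhaust $F\mc{M}(\Delta(n))$ one must both exhibit a $5n$-element modular model (the Hall--Dilworth glued chain of copies of $M_3$), bounding the lattice from below, and derive from the relations and modularity alone that the $5n$ elements are closed under join and meet (e.g.\ that $c_{12}^k+c_{12}^l=a_\top^{\max(k,l)}$ and $c_{12}^kc_{12}^l=a_\bot^{\min(k,l)}$), bounding it from above; neither computation appears. As written, the proposal establishes only (iii) and (iv) and leaves the substance of (i) and (ii) to the very step it flags as ``delicate.''
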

\begin{proof} (i)-(iii) are in \cite{alan}.
(iv) follows from (1) and (2) of Lemma~\ref{redframe}, readily.
See Fig.~\ref{alanfig}.
\end{proof}

 \begin{figure}
\setlength{\unitlength}{8mm}
\begin{picture}(7,10)(-1,-3.6) 
\put(0,0){\circle*{0.2}}
\put(0,6){\circle*{0.2}}
\put(-3,3){\circle*{0.2}}
\put(-2,3){\circle*{0.2}}
\put(3,3){\circle*{0.2}}
\put(0,2){\circle*{0.2}}
\put(0,6){\circle*{0.2}}
\put(1,3){\circle*{0.2}}
\put(-1,3){\circle*{0.2}}
\put(0,4){\circle*{0.2}}
\put(0,3){\circle*{0.2}}
\put(0,2){\circle{0.3}}
\put(0,6){\circle{0.3}}
\put(1,3){\circle{0.3}}
\put(-1,3){\circle{0.3}}
\put(0,4){\circle{0.3}}
\put(0,3){\circle{0.3}}
\put(-1,1){\circle*{0.1}}
\put(-2,2){\circle*{0.1}}
\put(1,1){\circle*{0.2}}
\put(2,2){\circle*{0.2}}
\put(0,1){\circle*{0.1}}
\put(0,5){\circle*{0.1}}
\put(-2,4){\circle*{0.1}}
\put(-1,5){\circle*{0.1}}
\put(2,4){\circle*{0.1}}
\put(1,5){\circle*{0.1}}
\put(-1,4){\circle*{0.1}}
\put(-1,2){\circle*{0.1}}
\put(0,1){\line(-1,1){2}}
\put(0,3){\line(-1,1){1}}
\put(0,5){\line(-1,-1){2}}
\put(0,3){\line(-1,-1){1}}
\put(0,0){\line(1,1){3}}
\put(0,0){\line(-1,1){3}}
\put(-1,1){\line(1,1){3}}
\put(1,1){\line(-1,1){3}}
\put(-2,2){\line(1,1){3}}
\put(2,2){\line(-1,1){3}}
\put(-3,3){\line(1,1){3}}
\put(3,3){\line(-1,1){3}}
\put(0,0){\line(0,1){6}}
\put(0.2,6){$a^2_\top$}
\put(0.2,4){$a'^2_\top$}
\put(0.2,3){$c'^2_{12}$}
\put(1.2,3){$a'^2_1$}
\put(-0.8,3){$a'^2_2$}
\put(3.2,3){$a^2_1$}
\put(-3.6,3){$a^2_2$}
\put(.2,0){$a^2_\bot$}
\put(.2,2){$a'^2_\bot$}
\put(1.2,.9){$b+a^2_\bot$}
\put(2.2,1.9){$d+a^2_\bot$}

\put(4,-4){\circle*{0.2}}
\put(4,2){\circle*{0.2}}
\put(1,-1){\circle*{0.2}}
\put(2,-1){\circle*{0.2}}
\put(7,-1){\circle*{0.2}}
\put(4,-2){\circle*{0.2}}
\put(4,2){\circle*{0.2}}
\put(5,-1){\circle*{0.2}}
\put(3,-1){\circle*{0.2}}
\put(4,0){\circle*{0.2}}
\put(4,-1){\circle*{0.2}}
\put(4,-2){\circle{0.3}}
\put(4,2){\circle{0.3}}
\put(5,-1){\circle{0.3}}
\put(3,-1){\circle{0.3}}
\put(4,0){\circle{0.3}}
\put(4,-1){\circle{0.3}}
\put(3,-1){\circle*{0.1}}
\put(2,-2){\circle*{0.1}}

\put(5,-3){\circle*{0.2}}
\put(6,-2){\circle*{0.2}}
\put(4,-3){\circle*{0.1}}
\put(4,1){\circle*{0.1}}
\put(2,0){\circle*{0.1}}
\put(3,1){\circle*{0.1}}
\put(6,0){\circle*{0.1}}
\put(5,1){\circle*{0.1}}
\put(3,0){\circle*{0.1}}
\put(3,-2){\circle*{0.1}}

\put(4,-3){\line(-1,1){2}}
\put(4,-1){\line(-1,1){1}}
\put(4,1){\line(-1,-1){2}}
\put(4,-1){\line(-1,-1){1}}
\put(4,-4){\line(1,1){3}}
\put(4,-4){\line(-1,1){3}}
\put(3,-3){\line(1,1){3}}
\put(5,-3){\line(-1,1){3}}
\put(2,-2){\line(1,1){3}}
\put(6,-2){\line(-1,1){3}}
\put(1,-1){\line(1,1){3}}
\put(7,-1){\line(-1,1){3}}
\put(4,-4){\line(0,1){6}}

\put(4.2,2){$a^1_\top$}
\put(4.2,0){$a'^1_\top$}
\put(4.2,-1){$c'^1_{12}$}

\put(5.2,-1){$a'^1_1$}
\put(3.2,-1){$a'^1_2$}
\put(7.2,-1){$a^1_1$}
\put(.4,-1.2){$a^1_2$}
\put(4.2,-2){$a'^1_\bot$}
\put(5.2,-3.1){$b$}
\put(6.2,-2.1){$d$}

\put(4.2,-4){$a^1_\bot$}

\put(0,0){\line(1,-1){1}}

\put(1,1){\line(1,-1){1}}

\put(2,2){\line(1,-1){1}}

\put(3,3){\line(1,-1){1}}

\end{picture}

\caption{Reduction of $\Delta(2)$}\label{alanfig}
\end{figure}
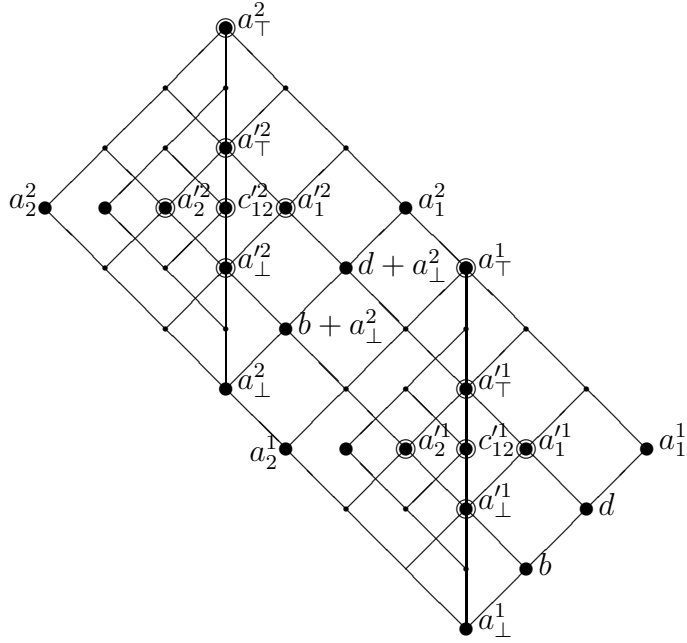.

\subsection{Towers of $3$-frames}
An $n$-\emph{tower} of $3$-frames is a presentation
$\Delta(3,n)$ consisting of the product of an $n$-tower
$\Delta(n)$ of $2$-frames with the chain $a_\bot^1\leq  a_3^1$
and an additional generator $c_{13}^1$ such
that the $a_\bot^1, a_1^1, a_3^1, c_{13}^1 $ form a $2$-frame $\Phi$.
In particular,
by Fact~\ref{mod2}  $a_2^n(a^1_1+a^1_3)= a^1_\bot=(a^1_1+a_2^1)a^1_3$ 
whence $\Phi^1$ together with  $a^1_3, c^1_{13}$ forms a $3$-frame.
See Fig.~\ref{3towfig}

\begin{lem} \lab{3tower}
$n$-towers of $3$-frames are projective within $\mc{M}$.
\end{lem}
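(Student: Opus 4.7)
The plan is to assemble $\Delta(3,n)$ in stages from projective presentations already available. By Lemma~\ref{2tower}(i) the $n$-tower $\Delta(n)$ of $2$-frames is projective within $\mc{M}$, and the two-element chain $a^1_\bot\leq a^1_3$ is projective. By Fact~\ref{prod} their product is projective within $\mc{M}$; moreover, the product relation $a^1_3\cdot\sum\Delta(n)=a^1_\bot$ forces $a^1_1 a^1_3=a^1_\bot$, which is already half of the desired $2$-frame at level~$1$. Adjoining $c^1_{13}$ as a free generator then preserves projectivity by Fact~\ref{newgen}.

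What remains is to impose the outstanding $2$-frame relations on $\{a^1_\bot,a^1_1,a^1_3,c^1_{13}\}$ as a strengthening, so that projectivity propagates by the fact, recalled in Section~\ref{3}, that strengthenings of projective presentations are projective. Since a single $2$-frame is itself projective (Lemma~\ref{2tower}(i) with $n=1$), let $s_\bot,s_1,s_3,s_{13}$ be its witnessing terms and take strengthening terms $u_c=x_c$ for $c\neq c^1_{13}$ and $u_{c^1_{13}}=s_{13}(x_{a^1_\bot},x_{a^1_1},x_{a^1_3},x_{c^1_{13}})$. On a model of $\Delta(3,n)$ the $u_c$ all reduce to the identity by projectivity of the $2$-frame; on a model of the intermediate presentation, where $a^1_1 a^1_3=a^1_\bot$ is already forced, the element $u_{c^1_{13}}$ should yield a $2$-frame together with the unchanged $a^1_\bot,a^1_1,a^1_3$.

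The main obstacle is ensuring this last point, i.e., that the witnessing terms $s_\bot,s_1,s_3$ can be arranged to act as the identity whenever $x_\bot\leq x_1,x_3$ and $x_1 x_3=x_\bot$, so that only $c^1_{13}$ is altered by the strengthening. This can be handled via Fact~\ref{mod3} (Dedekind's lattice): truncating $x_{c^1_{13}}$ to the interval $[x_{a^1_\bot},x_{a^1_1}+x_{a^1_3}]$ by the term $x_{c^1_{13}}(x_{a^1_1}+x_{a^1_3})+x_{a^1_\bot}$, Dedekind's description of the free modular lattice on three generators (with one sandwiched between the meet and join of the other two) provides the required $2$-frame subconfiguration, giving $s_{13}$ without altering $a^1_\bot,a^1_1,a^1_3$.
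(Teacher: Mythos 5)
Your setup -- forming the product of $\Delta(n)$ with the chain $a^1_\bot\leq a^1_3$, adjoining $c^1_{13}$ freely, and truncating it into the interval $[a^1_\bot,a^1_1+a^1_3]$ -- coincides with the first steps of the paper's proof. The gap is in the final step. You claim that the witnessing terms of the $2$-frame can be arranged so that $a^1_\bot,a^1_1,a^1_3$ are fixed and only $c^1_{13}$ is replaced by some term $s_{13}$. No such term exists. By Fact~\ref{mod3}, the modular lattice freely generated by $a,a',c$ with $aa'\leq c\leq a+a'$ is the lattice of Fig.~\ref{dede}; there $ac=b$ and $a'c=b'$ lie strictly above $aa'$, and inspection of the diagram shows that $a$ and $a'$ have no common complement in $[aa',a+a']$ at all. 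The $2$-frame that Fact~\ref{mod3} provides lives on the proper subinterval $[b+a'c,\,d+a'(a+c)]$, with ``sides'' $d+a'c$ and $a'(a+c)$ rather than $a$ and $a'$. Since there are models of your intermediate presentation in which $a^1_1,a^1_3,c^1_{13}$ generate exactly this free configuration over $a^1_\bot=a^1_1a^1_3$, no lattice term in these generators can serve as a common complement, so condition (1) in the definition of strengthening fails for your proposed terms. (The fact that the witnessing terms act as the identity on genuine models of $\Delta(3,n)$ -- your condition (2) -- is correct but is not the issue.)

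Repairing this is the actual content of the paper's proof: one must move $a^1_1$ down to the subinterval where the Dedekind $2$-frame exists, i.e.\ replace it by (essentially) $d+b+v'$ with $b=a^1_1c^1_{13}$ and $d=a^1_1(c^1_{13}+a^1_3)$, and correspondingly replace $a^1_3$ and $c^1_{13}$. But $a^1_1$ is the bottom generator of the whole tower $\Delta(n)$, so shrinking it forces a compatible modification of every $a^k_i$; this is done by the lower reduction $\Delta(n)^b_d$ of Lemma~\ref{2tower}(iv), after which Fact~\ref{mod2} (applied to the three relatively independent chains $u''\geq u'$, $w''\geq w'$, $v''\geq v'$) is needed to re-glue the reduced tower with the new $2$-frame into a single configuration spanning a common interval. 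None of this reduction-and-reassembly machinery appears in your argument, and without it the strengthening cannot be carried out.
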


\begin{proof}
By Facts~\ref{prod},  \ref{newgen}, and Lemma~\ref{2tower},
the product of $\Delta(n)$ with the chain $a_\bot^1 \leq a_3^1$  
is projective within $\mc{M}$ and strengthening 
with 
\[ c_{13}^1:=  (c_{13}^1 +a_\bot^1)(a_1^1+ a_3^1)  \] 
yields the additional relations $a_\bot^1 \leq c_{13}^1 \leq a_1^1+a_3^1$. 
Now, in view of Fact~\ref{mod3}   put
\[ b= a_1^1c_{13}^1 \mbox{ and }  d= a_1^1(c_{13}^1+ a_3^1)\]
\[(*)\quad v=a^1_3\geq v''=a^1_3(a^1_1+c^1_{13}) \geq v'= a^1_3c^1_{13} \geq a^1_\bot   \]
to obtain the $2$-frame $\Phi'=(b+v',d+b+v', v''+ b, c^1_{13}+b+v', d+v'')$.
Now, together with the chains defined in (iv) of Lemma~\ref{2tower},
 apply Fact~\ref{mod2} to obtain the $n$-tower
$v'+\Delta(n)^b_d$ spanning $[u'+w'+v', u''+w'']$
and the $2$-frame $u'+\Phi'$ spanning $[u'+w'+v', w''+v'']$.
This verifies the strengthening
\[ \Delta(n):=v'+ \Delta(n)_d^b,\, a^1_3:=u'+w'+v'+a^1_3,\, c^1_{13}:=u'+w'+v'+c^1_{13}   \]
and proves the lemma.
\end{proof}

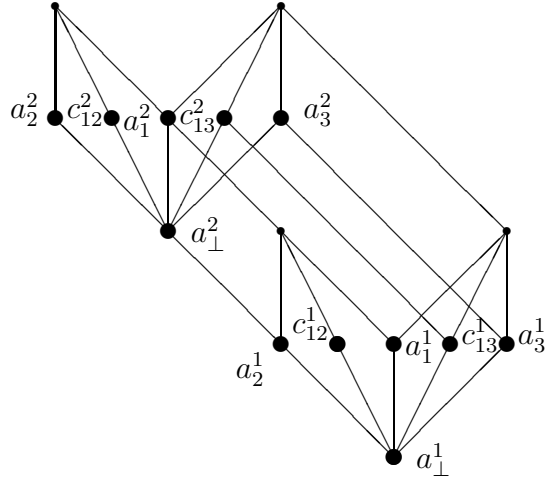
\begin{figure} 
\setlength{\unitlength}{15mm}
\begin{picture}(6,4)(-2,-4)  
\put(0,-2){\line(-1,1){1}} 
\put(2,-3){\line(-1,1){3}}
\put(1,-3){\line(0,1){1}}
\put(0,-2){\line(0,1){1}}
\put(-1,-1){\line(0,1){1}}
\put(2,-3){\line(0,-1){1}}
\put(0,-2){\line(1,-1){2}}
\put(2,-4){\line(1,1){1}}
\put(2,-3){\line(1,1){1}}
\put(2,-4){\line(1,2){1}}
\put(2,-4){\line(-1,2){1}}
\put(0,-2){\line(-1,2){1}}
\put(3,-3){\line(0,1){1}}
\put(2,-4){\circle*{0.15}}
\put(2,-4){\circle*{0.15}}
\put(1,-3){\circle*{0.15}}
\put(2,-3){\circle*{0.15}}
\put(0,-2){\circle*{0.15}}
\put(1.5,-3){\circle*{0.15}}
\put(3,-3){\circle*{0.15}}
\put(2.5,-3){\circle*{0.15}}
\put(1.5,-3){\circle*{0.15}}
\put(0,-1){\circle*{0.15}}
\put(-1,-1){\circle*{0.15}}
\put(-1,0){\circle*{0.07}}
\put(1,-2){\circle*{0.07}}
\put(3,-2){\circle*{0.07}}
\put(-.5,-1){\circle*{0.15}}
\put(0.2,-2.1){$a^2_\bot$}
\put(-0.4,-1.1){$a^2_1$}
\put(-1.4,-1){$a^2_2$}
\put(-0.9,-1){$c^2_{12}$}
\put(2.1,-3.1){$a^1_1$}
\put(3.1,-3){$a^1_3$}
\put(.6,-3.3){$a^1_2$}
\put(1.1,-2.9){$c^1_{12}$}
\put(2.2,-4.1){$a^1_\bot$}
\put(0,-2){\line(1,1){1}}
\put(0,-1){\line(1,1){1}}
\put(0,-2){\line(1,2){1}}
\put(1,-1){\line(0,1){1}}
\put(1,-1){\line(1,-1){2}}
\put(.5,-1){\line(1,-1){2}}
\put(1,0){\line(1,-1){2}}

\put(1,-1){\circle*{0.15}}
\put(.5,-1){\circle*{0.15}}
\put(1,0){\circle*{0.07}}
\put(1.2,-1){$a^2_3$}
\put(.1,-1.05){$c^2_{13}$}
\put(2.6,-3){$c^1_{13}$}
\end{picture}
\caption{$2$-tower of $3$-frames }\label{3towfig}
\end{figure}

The  \emph{reduction setup}  $\Delta(3,n)(x,y)$
is the union of $\Delta(n)(x,y)$ and $\Phi(x,y)$.
\begin{cor}\lab{red3}
\begin{itemize}
\item[(i)] If $\Delta(3,n)$ is an $n$-tower of $3$-frames in a modular lattice $L$
and if $b,d \in L$ such that  $a^1_\bot\leq b \leq d \leq a^1_1$
then $\Delta(3,n)^b_d:= \Delta(3,n)(b,d)$ is also an $n$-tower of $3$-frames in $L$
and spans the interval $[u'+w'+v', u''+w''+v'']$
with the chains from $(*)$ and (iv) of Lemma~\ref{2tower}.
\item[(ii)]
Redefining two of the chains into one,
namely $u:=u+v \geq u'':=u''+v'' \geq u':= u''+v''$, so
one has that $\Delta(3,n)^b_d$ spans the interval 
$[u'+w', u''+w'']$. Moreover, if $d=a_1^1$ then $w=w''$ and $u=u''$.
\end{itemize}
\end{cor}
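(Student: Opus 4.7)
The strategy is to split the reduction into its 2-tower part and its 2-frame part, apply the corresponding reduction lemmas to each, and glue the two reductions using Fact~\ref{mod2} and the product structure underlying the definition of a 3-tower.

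Since $\Delta(3,n)(x,y)$ is by definition the union of $\Delta(n)(x,y)$ and $\Phi(x,y)$ (the reduction setup for the 2-frame $\Phi=(a^1_\bot,a^1_1,a^1_3,c^1_{13})$), the reduction $\Delta(3,n)^b_d$ decomposes correspondingly. Lemma~\ref{2tower}(iv) gives that $\Delta(n)^b_d$ is an $n$-tower of 2-frames in $L$ spanning $[u'+w',u''+w'']$, with the chains $u=a^n_2\geq u''\geq u'$ and $w=a^1_1\geq w''=d\geq w'=b$ from (iv) of Lemma~\ref{2tower} satisfying $uw=a^1_\bot$, $a^{'1}_\bot=u'+w'$, and $a^{'n}_\top=u''+w''$. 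Lemma~\ref{redframe}(1) gives that $\Phi^b_d$ is a 2-frame in $L$.

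To glue these pieces into an $n$-tower of 3-frames I invoke the product-meet relation $a^{\Delta(n)}_\top\cdot a^1_3=a^1_\bot$, which is the defining relation of the product of $\Delta(n)$ with the chain $a^1_\bot\leq a^1_3$ and which expresses the independence of $a^1_3$ from the join of the 2-tower over $a^1_\bot$. Applied to the three sub-chains $u\geq u''\geq u'$, $w\geq w''\geq w'$, and $v=a^1_3\geq v''\geq v'$ from $(*)$, Fact~\ref{mod2} then provides a sublattice isomorphic to the direct product of the three chains, inside which $\Delta(3,n)^b_d$ lives in the sub-box $[u'+w'+v',u''+w''+v'']$. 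The 2-frame identities for the reduced $\Phi^b_d$, together with the product-meet relation inherited from the original 3-tower, supply precisely the relations needed for the reduced presentation to be an $n$-tower of 3-frames.

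For part (ii), the redefinition $u:=u+v$, $u'':=u''+v''$, $u':=u'+v'$ is still a chain by Fact~\ref{mod2}, and the new interval $[u'+w',u''+w'']$ equals $[u'+v'+w',u''+v''+w'']$, recovering the interval from (i). The ``moreover'' clause follows from the corresponding clause of Lemma~\ref{2tower}(iv) --- when $d=a_1^1$, $w''=w$ and $u''=u$ --- together with the 2-frame identity $v''=a^1_3(a^1_1+c^1_{13})=a^1_3=v$ (using the 2-frame relation $a^1_1+c^1_{13}=a^1_1+a^1_3$ in $\Phi$), so that $u''+v''=u+v$.

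The main obstacle I anticipate is the bookkeeping of compatibility: the generators $a^1_\bot$ and $a^1_1$ are common to $\Delta(n)(x,y)$ (through the first 2-frame $\Phi^1$) and to $\Phi(x,y)$, and hence receive two a priori different reduction terms; one must verify that these agree modulo taking joins, or follow a single canonical convention, so that the reduction yields a well-defined 3-tower. Once these reduced elements are pinned down, the geometric picture furnished by Fact~\ref{mod2} makes the remaining verifications routine.
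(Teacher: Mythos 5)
The paper states this corollary without an explicit proof; the intended justification is precisely the argument in the proof of Lemma~\ref{3tower} (decompose the reduction into $\Delta(n)^b_d$ and $\Phi^b_d$, apply Lemma~\ref{2tower}(iv) and Lemma~\ref{redframe}, and glue via Fact~\ref{mod2} using the relative independence of $a^n_2$, $a^1_1$, $a^1_3$ over $a^1_\bot$), which is exactly what you do, including the correct reading of the typo $u':=u''+v''$ as $u':=u'+v'$ and the observation $v''=v$ when $d=a^1_1$. Your closing remark about the two a priori different reduction terms for the shared generators $a^1_\bot$, $a^1_1$ is the right point to be careful about, and it is resolved exactly as you suggest, by adding $v'$ to the reduced tower and $u'$ to the reduced $2$-frame as in the proof of Lemma~\ref{3tower}.
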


\subsection{Towers of skew frames}\lab{tow4}
A  \emph{skew} $(n+1,n)$-\emph{frame}
is the  lattice presentation $\Psi$  given by the product
of an $n$-frame $\Phi$ with the chain $a_\bot \leq a'_{n+1}$
and an additional generator $c'_{1,n+1}$ 
such that the list of terms  $a_\bot, b=a_1(a'_{n+1}+c'_{1,n+1}), a'_{n+1}, c'_{1,n+1}$
satisfies the relations of a $2$-frame. See Fig.~\ref{skewframefig}.

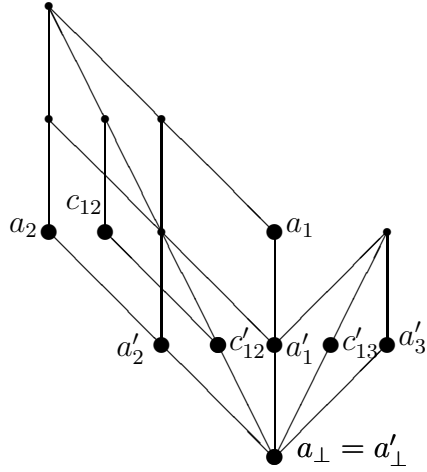
\begin{figure} 
\setlength{\unitlength}{15mm}
\begin{picture}(5,5)(-1,-4)  

\put(0,0){\line(0,-1){2}}
\put(2,-2){\line(0,-1){2}}
\put(1,-1){\line(0,-1){2}}
\put(0,-1){\line(1,-1){2}}
\put(0,0){\line(1,-1){2}}
\put(0,-2){\line(1,-1){2}}
\put(2,-4){\line(1,1){1}}
\put(2,-3){\line(1,1){1}}

\put(2,-4){\line(1,2){1}}

\put(2,-4){\line(-1,2){2}}
\put(1.5,-3){\line(-1,1){1}}
\put(0.5,-2){\line(0,1){1}}
\put(3,-3){\line(0,1){1}}

\put(2,-4){\circle*{0.15}}

\put(2,-4){\circle*{0.15}}
\put(1,-3){\circle*{0.15}}
\put(0,-2){\circle*{0.15}}
\put(0,0){\circle*{0.07}}
\put(0.5,-2){\circle*{0.15}}
\put(2,-2){\circle*{0.15}}
\put(1.5,-3){\circle*{0.15}}

\put(3,-3){\circle*{0.15}}
\put(2.5,-3){\circle*{0.15}}
\put(1.5,-3){\circle*{0.15}}
\put(2,-3){\circle*{0.15}}

\put(0,-1){\circle*{0.07}}

\put(0.5,-1){\circle*{0.07}}

\put(1,-1){\circle*{0.07}}

\put(1,-2){\circle*{0.07}}

\put(3,-2){\circle*{0.07}}

\put(-.35,-2){$a_2$}
\put(.15,-1.8){$c_{12}$}
\put(2.1,-2){$a_1$}
\put(2.1,-3.1){$a'_1$}
\put(3.1,-3){$a'_3$}

\put(.6,-3.1){$a'_2$}

\put(1.6,-3.05){$c'_{12}$}

\put(2.6,-3.05){$c'_{13}$}

\put(2.2,-4){$a_\bot=a'_\bot$}

\put(2.2,-4){$a_\bot=a'_\bot$}

\end{picture}

\caption{Skew $(3,2)$-frame $(\Phi',a'_i,c'_{1j}; \Phi, a_i,c_{12}) $}\label{skewframefig}
\end{figure}

An $n$-\emph{tower} $\Omega(m,n)$ \emph{of  skew}   
 $(m,m-1)$-\emph{frames}   is the presentation given as
the product of an $n$-tower $\Delta(m-1,n)$ of $m-1$-frames 
with  the chain $a^1_\bot \leq  a_m'$ and an additional
generator $c_{1m}'$ subject to relations
stating that $a^1_\bot,a_1^1(a'_m+c_{1m}'),a'_m, c'_{1m}$ is a $2$-frame.
Compare  Fig.~\ref{skewtowerfig} for the case of towers
of skew $(3,2)$-frames. Dealing with the case $m=4$, 
we put $\Omega(n)=\Omega(4,n)$ and speak of $n$-\emph{towers
of skew frames}.

\begin{thm}\lab{4tower}
$n$-towers of  skew frames
can be  defined in terms of $n+6$ generators and
 are projective within $\mc{M}$.
\end{thm}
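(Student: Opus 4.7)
The plan is to mirror the proof of Lemma~\ref{3tower}, with $\Delta(3,n)$ in place of $\Delta(n)$ and $a'_4,c'_{14}$ in place of $a^1_3,c^1_{13}$. By Lemma~\ref{2tower}(iii), $F\mc{M}(\Delta(n))$ is generated by the $n+2$ elements $a^1_1,a^n_2,c^k_{12}$; projectivity then gives an equivalent presentation of $\Delta(n)$ on $n+2$ generators. Adjoining $a^1_3$ and $c^1_{13}$ brings $\Delta(3,n)$ to $n+4$ generators, and adjoining $a'_4$ and $c'_{14}$ brings $\Omega(n)$ to $n+6$ generators.

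For projectivity, start with $\Delta(3,n)$, projective by Lemma~\ref{3tower}. Form its product with the chain $a^1_\bot \leq a'_4$ (still projective by Facts~\ref{newgen} and~\ref{prod}) and adjoin $c'_{14}$ as a free generator (Fact~\ref{newgen}). Then strengthen by
\[ c'_{14} := (c'_{14}+a^1_\bot)(a^1_1+a'_4), \]
which enforces $a^1_\bot \leq c'_{14} \leq a^1_1+a'_4$. Since the product construction gives $a^1_1 \cdot a'_4 = a^1_\bot$, Fact~\ref{mod3} applies with $a=a^1_1$, $a'=a'_4$, $c=c'_{14}$, producing $b := a^1_1 c'_{14}$, $d := a^1_1(a'_4+c'_{14})$, a Dedekind $2$-frame above $b$, and the auxiliary chain $a'_4 \geq a'_4(a^1_1+c'_{14}) \geq a'_4 c'_{14} \geq a^1_\bot$, in exact analogy with line $(*)$ of the proof of Lemma~\ref{3tower}.

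Now reduce $\Delta(3,n)$ via $(b,d)$ using Corollary~\ref{red3}, push the resulting tower up by $v' := a'_4 c'_{14}$, and push $a'_4$ and $c'_{14}$ up by the new common bottom. By Fact~\ref{mod2}, applied to the three relatively independent chains --- one on the $a'_4$ side, one on the $a^1_1$ side between $b$ and $d$, and one above $a^1_1$ coming from the $3$-frame tower --- the reduced tower and the new Dedekind $2$-frame glue together independently. The final redefinition of generators, modelled verbatim on that of Lemma~\ref{3tower}, then forces the required $2$-frame relations on the quadruple $(a^1_\bot, a^1_1(a'_4+c'_{14}), a'_4, c'_{14})$. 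Since strengthening preserves projectivity, $\Omega(n)$ is projective within $\mc{M}$.

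The principal obstacle is the bookkeeping needed to verify, via Fact~\ref{mod2}, that after the push-ups the three chains really do pairwise meet only in the new bottom and jointly reach a common top; once that setup is in place, the remaining skew frame identities reduce to the same calculation as in Lemma~\ref{3tower}, carried out one layer higher.
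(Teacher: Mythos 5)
Your setup---the generator count $n+2 \to n+4 \to n+6$, the product of $\Delta(3,n)$ with the chain $a^1_\bot\leq a'_4$, the free adjunction of $c'_{14}$, and the first strengthening $c'_{14}:=(c'_{14}+a^1_\bot)(a^1_1+a'_4)$---coincides with the paper's. The gap is in the reduction step. You reduce $\Delta(3,n)$ via the pair $(b,d)$ with $d=a^1_1(a'_4+c'_{14})$, ``in exact analogy'' with Lemma~\ref{3tower}. That analogy is precisely what has to be broken here: a skew $(4,3)$-frame does \emph{not} require $a^1_1$ itself to form a $2$-frame with $a'_4$; it requires only that the quadruple $(a^1_\bot,\,a^1_1(a'_4+c'_{14}),\,a'_4,\,c'_{14})$ be a $2$-frame, and in a genuine skew frame the element $a'_1=a^1_1(a'_4+c'_{14})$ lies strictly below $a^1_1$ (e.g.\ $\mathbb{Z}pe_1<\mathbb{Z}e_1$ in the basic model of Subsection~\ref{skew}). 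Hence on a lattice already carrying an $n$-tower of skew frames your $b$ evaluates to $a^1_\bot$ but your $d$ evaluates to $a'_1<a^1_1$, so the reduction $\Delta(3,n)^b_d$ is a non-trivial lower reduction and your witnessing terms move the configuration instead of fixing it: condition (2) in the definition of projectivity fails. (Your terms do always produce a model, but of the non-skew presentation in which $a^1_1\leq a'_4+c'_{14}$ is additionally forced.) The paper avoids this by using the one-sided reduction $\Delta(3,n)^b_{a^1_1}$, which raises only the bottom and keeps the top of the first layer at $a^1_1$; this is the identity on genuine skew towers because there $b=a^1_1c'_{14}=a^1_\bot$.

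A second casualty of the verbatim copy is the final redefinition. Since $a^1_1$ is kept at full height, the element that is to play the role of $a'_4$ can no longer be obtained by joining the old $a'_4$ with the new bottom (that would not be a complement of the new $a'_1$ over the new $a^1_1$); the paper recovers it as a meet, $(a^1_1+u'+v')(c'^1_{14}+v+u')$, and accordingly the third chain used in Fact~\ref{mod2} has top $v=a'_4(a^1_1+c'_{14})$ rather than $a'_4$ itself, unlike line $(*)$ of Lemma~\ref{3tower}. So the difficulty you defer as ``bookkeeping'' is not the real obstacle: the choice of reduction and the form of the redefined $a'_4$ both have to change, and these two changes are exactly where the skewness of the frame enters the argument.
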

\begin{proof}
$n+2$ generators  provide the $n$-tower of $2$-frames, $2$ more 
the $n$-tower of $3$-frames, and another $2$ are used to
turn this into an $n$-tower of skew frames.

Omitting the relations concerning $c'_{14}$,
projectivity within $\mc{M}$ follows from  Lemma~\ref{3tower} and Facts~\ref{prod}, \ref{newgen}.  A first strengthening 
\[c'_{14}:= (c'_{14}+a_\bot^1)( a_1^1+a'_4)  \]
adds  the relations 
$ a_\bot^1 \leq c'_{14} \leq  a_1^1+a'_4$. 
In view of Fact~\ref{mod3}  put $b= a_1^1 c'_{14}$ and  
$v=v''=a'^1_4(a^1_1+c'^1_{14}) \geq v'=a'^1_4 c'^1_4$
and let $M$ denote the interval $[b+v', a_1^1+v']$
of the sublattice generated by $a^1_1, a'^1_4, c'^1_{14}$.
Consider the reduction $\Delta(3,n)_{a^1_1}^b$
and 
apply Fact~\ref{mod2}
to the chains $u'=u''\geq u'$ and $w=w''\geq w'$ 
from (ii) of Cor.\ref{red3} and 
$v=v''\geq v'$.  This yields the $n$-tower $v'+\Delta(3,n)^b_{a^1_1}$
spanning $[u'+v'+w',u+v'+w]$    and $u'+M$ 
spanning $[u'+v'+w', u'+v+w]$ and verifies the 
final strengthening
\[  \Delta(3,n):=\Delta(3,n)^b_{a^1_1},\;
c'^1_{14}:= c'^1_{14}+u',\;  a'^1_4:=(a^1_1+u'+v')(c'^1_{14}+v+u').\]    
\end{proof}

The  presentations  
 $n$-tower of $2$- resp.  $3$-frames and
$n$-tower of skew $(4,3)$-frames have been constructed
explicitly and  uniformly for
all $n$.
This results in the following.   
\begin{cor}\lab{4pro}
There is an algorithm constructing for each $n$ 
the presentation
 $\Omega(n)$,   projective
within the class of modular lattices.
\end{cor}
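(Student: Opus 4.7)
The plan is to unwind the constructions in Lemma~\ref{2tower}, Lemma~\ref{3tower}, and Theorem~\ref{4tower} and verify that each is uniform and effective in $n$. Projectivity has already been established in Theorem~\ref{4tower}, so the remaining content of the corollary is purely the algorithmic aspect: produce the generator list, the relation list, and the witnessing terms $t_i^{\Omega(n)}(\bar x)$ by a computable procedure that takes $n$ as input.

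First I would describe the base construction. The $n$-tower of $2$-frames $\Delta(n)$ is given by $n$ disjoint copies of the standard $2$-frame presentation, whose generators $a_\bot^k, a_1^k, a_2^k, c_{12}^k$ are indexed by $k=1,\dots,n$, together with the finitely many ``glueing'' relations $a_\top^k, a_2^k \nearrow a_1^{k+1}, a_\bot^{k+1}$ for $1 \leq k < n$. Both lists are visibly produced by a simple loop on $k$, and projectivity is Lemma~\ref{2tower}(i); Alan Day's witnessing terms are uniform in $n$.

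Next I would add the pieces that turn $\Delta(n)$ into $\Delta(3,n)$ and then into $\Omega(n)$. For $\Delta(3,n)$, one adjoins the two generators $a_3^1, c_{13}^1$, the single product relation $(\sum_i a_i^1 + \sum_k a^k_2)\cdot a_3^1 = a_\bot^1$ enforcing disjointness with the $2$-tower, and then performs the explicit strengthening from Lemma~\ref{3tower}, namely $c_{13}^1 := (c_{13}^1+a_\bot^1)(a_1^1+a_3^1)$ followed by the triple substitution $\Delta(n):=v'+\Delta(n)_d^b$, $a^1_3 := u'+w'+v'+a^1_3$, $c^1_{13} := u'+w'+v'+c^1_{13}$; all the ingredient terms $b,d,u',v',w'$ are given by fixed formulas in previously defined generators. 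For $\Omega(n)=\Omega(4,n)$ one repeats the pattern of Theorem~\ref{4tower}: adjoin $a_4',c_{14}'$, the product relation, and the two strengthenings displayed there. Since strengthening preserves projectivity (Fact after Fact~\ref{newgen}) and disjoint unions resp.\ products of projective presentations are projective (Facts~\ref{newgen}, \ref{prod}), projectivity carries through; and the terms realizing the strengthenings, together with the $t_i^{\Omega(n)}$, are the syntactic compositions prescribed by the proofs of Lemmas~\ref{2tower}--\ref{3tower} and Theorem~\ref{4tower}, hence computable from $n$.

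The only step requiring attention is bookkeeping: at each stage of strengthening, the new witnessing terms are obtained by substituting the current defining terms into the previous witnesses (using that the composition of a witnessing family with a strengthening transformation yields witnesses for the strengthened presentation). I would check that this substitution has a uniform description in $n$, which is immediate because each individual substitution either affects a fixed finite block of generators or is indexed by $k=1,\dots,n$ in a transparent way. Writing these substitutions out as a pseudocode-style recipe, indexed by $n$, then exhibits the desired algorithm and completes the proof. I do not anticipate a genuine obstacle here; the corollary is essentially a remark that every construction used above was displayed in closed form rather than being obtained by an abstract existence argument.
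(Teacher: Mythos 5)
Your proposal is correct and matches the paper's own justification: the paper derives the corollary precisely from the observation that the presentations in Lemma~\ref{2tower}, Lemma~\ref{3tower}, and Theorem~\ref{4tower} were constructed explicitly and uniformly in $n$, with projectivity already supplied by Theorem~\ref{4tower}. Your additional bookkeeping about composing witnessing terms through the strengthenings is consistent with the facts in Section~\ref{2} and does not diverge from the paper's route.
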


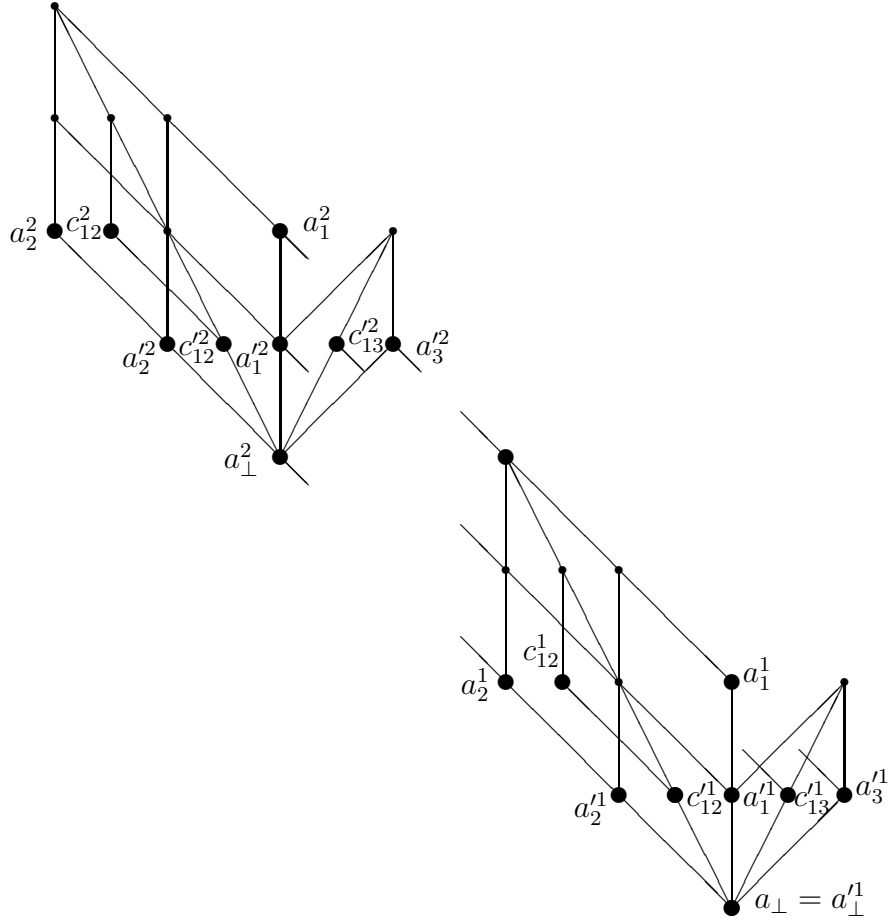
\begin{figure} 
\setlength{\unitlength}{15mm}
\begin{picture}(8,8)(-4,-4)

\put(-4,2){\circle*{0.15}}

\put(-3.5,2){\circle*{0.15}}

\put(-2.5,1){\circle*{0.15}}
\put(-2.,1){\circle*{0.15}}

\put(-1.5,1){\circle*{0.15}}
\put(-1,1){\circle*{0.15}}

\put(-1,2){\circle*{0.07}}

\put(-4,3){\circle*{0.07}}

\put(-4,4){\circle*{0.07}}

\put(-3.5,3){\circle*{0.07}}

\put(-2,0){\line(-1,1){2}}
\put(-2,1){\line(-1,1){2}}
\put(-2,2){\line(-1,1){2}}
\put(-4,2){\line(0,1){2}}

\put(-2,0){\line(1,-1){.25}}
\put(-2,1){\line(1,-1){.25}}
\put(-2,2){\line(1,-1){.25}}
\put(-1.5,1){\line(1,-1){.25}}
\put(-1,1){\line(1,-1){.25}}

\put(-2,1){\line(1,1){1}}

\put(-2,0){\line(1,2){1}}

\put(-2,0){\line(-1,2){2}}

\put(-2.5,1){\line(-1,1){1}}

\put(-3.5,2){\line(0,1){1}}

\put(-1,1){\line(0,1){1}}

\put(0,0){\line(-1,1){.4}}

\put(-2,2){\line(0,-1){2}}


\put(0,-2){\line(-1,1){.4}}
\put(0,-1){\line(-1,1){.4}}

\put(-3,3){\line(0,-1){2}}




\put(-3,3){\circle*{0.07}}

\put(-3,2){\circle*{0.07}}

\put(-3,1){\circle*{0.15}}



\put(-2,2){\circle*{0.07}}

\put(-2,1){\circle*{0.07}}

\put(-2,0){\circle*{0.15}}




\put(0,0){\line(0,-1){2}}
\put(2,-2){\line(0,-1){2}}
\put(1,-1){\line(0,-1){2}}
\put(0,-1){\line(1,-1){2}}
\put(0,0){\line(1,-1){2}}
\put(0,-2){\line(1,-1){2}}
\put(2,-4){\line(1,1){1}}
\put(2,-3){\line(1,1){1}}

\put(2,-4){\line(1,2){1}}

\put(2,-4){\line(-1,2){2}}
\put(1.5,-3){\line(-1,1){1}}
\put(0.5,-2){\line(0,1){1}}
\put(3,-3){\line(0,1){1}}

\put(2,-4){\circle*{0.15}}

\put(2,-4){\circle*{0.15}}
\put(1,-3){\circle*{0.15}}
\put(0,-2){\circle*{0.15}}
\put(0,0){\circle*{0.15}}
\put(0.5,-2){\circle*{0.15}}
\put(2,-2){\circle*{0.15}}
\put(1.5,-3){\circle*{0.15}}

\put(3,-3){\circle*{0.15}}
\put(2.5,-3){\circle*{0.15}}
\put(1.5,-3){\circle*{0.15}}
\put(2,-3){\circle*{0.15}}

\put(0,-1){\circle*{0.07}}

\put(0.5,-1){\circle*{0.07}}

\put(1,-1){\circle*{0.07}}

\put(1,-2){\circle*{0.07}}

\put(3,-2){\circle*{0.07}}

\put(-.4,-2.1){$a^1_2$}
\put(.15,-1.8){$c^1_{12}$}
\put(2.1,-2){$a^1_1$}
\put(2.1,-3.1){$a'^1_1$}

\put(3,-3){\line(-1,1){.4}}
\put(2.5,-3){\line(-1,1){.4}}

\put(3.1,-3){$a'^1_3$}

\put(.6,-3.2){$a'^1_2$}

\put(1.6,-3,1){$c'^1_{12}$}

\put(2.55,-3.1){$c'^1_{13}$}

\put(2.2,-4){$a_\bot=a'^1_\bot$}

\put(-2,2){\circle*{0.15}}
\put(-2,0){\line(1,1){1}}

\put(-4.4,1.9){$a^2_2$}
\put(-3.9,2){$c^2_{12}$}
\put(-1.8,2){$a^2_1$}
\put(-3.4,.8){$a'^2_2$}
\put(-.8,.9){$a'^2_3$}

\put(-2.4,0.8){$a'^2_1$}

\put(-2.9,.9){$c'^2_{12}$}

\put(-1.4,1){$c'^2_{13}$}

\put(-2.5,-.1){$a^2_\bot$}

\end{picture}
\caption{$2$-tower of skew $(3,2)$-frames}\label{skewtowerfig}
\end{figure}

\section{Structure of towers of skew frames}\lab{4}

While projectivity within $\mc{M}$  of the presentation 
``$n$-tower (of skew frames)'' has 
been established in Theorem~\ref{4tower},
in this section we collect the needed concepts and results
about models of this and related presentations -
considered as lists  of elements or \emph{configurations}.
Maps are supposed to match such lists.
If such list is the concatenation of parts we say
that it is obtained by  \emph{combining} these parts.
We will use $\bar a$ to denote
lists of elements, in general, not necessarily the
  $a_1, \ldots ,a_n$  of an $n$-frame.

 First, we recall some
more details about particular elements in
sublattices generated by (skew)  frames  - 
used in  equivalent presentations e.g. in \cite{fm4}.

\subsection{Frames}\lab{framme}
A well known equivalent definition of frames 
is obtained es follows cf.  \cite{fr1}.
Consider an $n$-frame $\Phi$ in a modular lattice $L$ and define
$c_{ij}=c_{ji}= (a_i+a_j)(c_{1i}+c_{1j})$
for $1 \neq i \neq j\neq 1 $, Then it follows
\[ (\sum_{i \in I}a_i) \cdot (\sum_{j \in J} a_j) =
\bigl\{ \begin{array}{ll} a_\bot & \mbox{ if  } I\cap J=\emptyset \\ 
\sum_{k \in I\cap J}a_k & \mbox{ else} \end{array}
\; \mbox{ for } I,J \subseteq \{1,\ldots ,n\},   \]
and for pairwise distinct $i,j,k$ 
\[  a_i+a_j =a_i+ c_{ij},\;\;
 a_i \cdot c_{ij} =a_\bot,\]\[
c_{ik}=(a_i+a_k)\cdot (c_{ij}+c_{jk}). \]
We also write $\bot^\Phi=a_\bot^\Phi, a_i^\Phi, c_{ij}^\Phi$ for these elements
and $\top^\Phi=\sum_{i=1}^n a_i$.
Observe that, for any $k$, the $a_i,c_{ij}$ with $i,j \neq k$
form an $n-1$-frame $\Phi_{\neq k}$ in $L$.
An important property of frames $\Phi$ is the existence of the
\emph{perspectivities} $\pi_{kl}=\pi_{kl}^\Phi$, $k\neq l$, that is   lattice isomorphisms
between intervals of $L$ 
matching $\Phi_{\neq k}$ with $ \Phi_{\neq l}$
\[ \pi_{kl}:[a_\bot, \sum_{i \neq k}a_i] 
\to [a_\bot, \sum_{i \neq l}a_i]  
\mbox{ where } \pi_{kl}(x)= (x+c_{kl})\sum_{i \neq l}a_i.    \] 
Thus, $\pi_{kl}^\Phi(a)$ is obtained from a  lattice term 
$\hat{\pi}_{kl}(x,\bar z)$  substituting $a$ for $x$ and $\Phi$ for 
$\bar z$ (actually, we use only the case where $\Phi$ is a $4$-frame.)

\subsection{Reduction of frames}
Given 
a frame $\Phi$ and $b_1\in L$  such that   $a_\bot \leq b_1 \leq a_1$, define
\[b_j=a_j(b_1+c_{1j}) \mbox{ for } j\neq 1, b=\sum_{j=1}^n b_i,
 \mbox{ and }  b_{ij}=(b_i+b_j)c_{ij}   \]
to obtain, by \emph{upper reduction} with $b_1$,
the frame 
\[\Phi^{b_1}=\Phi^{b_1}_{a_\bot}=(b, b+a_{i (1\leq i \leq n)}, 
b+c_{ij( i\neq j)})  \] 
and by  \emph{lower reduction} the frame
\[ \Phi_{b_1} = \Phi^{a_1}_{b_1}= (a_\bot, b_{i (1\leq i\leq n)} ,b_{ij  (i\neq j)}).\] 
In view of the perspectivities, in both cases  the resulting frame is the same
if, for some $i\neq 1$, the construction is carried out based on 
$a_\bot \leq b_i \leq a_i$ 
such that $b_1= a_1(b_i+c_{1i})$.

\subsection{Stable elements}
Given a modular lattice $L$,  element $s$ of $L$ and an 
$n$-frame $\Phi$ in $L$, the element $s$
is $j$-\emph{stable} in $L$ for $\Phi$ and $j\geq 2$ if
\[sa_1=sa_j=a_\bot  \mbox{ and } s+a_1=s+a_j= a_1+a_j \]
and for all 
 $b_1\in L$ with  $a_\bot\leq b_1 \leq a_1$ one has
\[ s+ b_1 = s+b_j  \mbox{ where } b_j=a_j( b_1+c_{1j}).     \]
Obviously,  if $\Phi'$ is
another $n$-frame in $L$ such that
 $a_\bot, a_1,a_j,c_{1j}\nearrow  a'_\bot, a'_1,a'_j,c'_{1j}$   in $L$
then $s$ is $j$-stable for $\Phi$ if and only if 
$s+a'_\bot$ is $j$-stable for $\Phi'$.
Also,
in view of the perspectivities, if  $s$ is $j$-stable for $\Phi$ then
$\pi_{jk}(s)$ is $k$-stable for $\Phi$. 
This crucial concept is due to Ralph Freese \cite{fr}.
See \cite[Lemma 2,3]{fm4} for the following
\begin{fact}\lab{stab}
If $s$ is $j$-stable in $L$ for $\Phi$  then
 for all
$b \in L$ with  $a_\bot \leq b \leq a_1$, one has
$s+ a_\bot^{\Phi^b}$ $j$-stable for $\Phi^{b}$ and $s a_\top^{\Phi_b}$ 
$j$-stable for $\Phi_{b}$.
\end{fact}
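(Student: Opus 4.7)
My plan is to verify, separately for the upper and lower reduction, the three defining clauses of $j$-stability (disjointness with $a_1$ and $a_j$, span equal to $a_1+a_j$, and the equality on all further upper reductions). The first two clauses reduce to frame arithmetic; the substantive third clause is reduced to the hypothesis on $s$ via the observation that an iterated upper reduction is itself an upper reduction of $\Phi$ at a higher level.

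For the upper reduction $\Phi'=\Phi^{b_1}$ with new bottom $b$, axes $b+a_i$, and lines $b+c_{ij}$, the join identities $(s+b)+(b+a_i)=(b+a_1)+(b+a_j)$ for $i\in\{1,j\}$ are immediate from $s+a_i=a_1+a_j$. For the meets, modularity gives $(s+b)(b+a_i)=b+s(b+a_i)$, and using the frame decomposition $b=\sum_k b_k$ with $b_k\le a_k$ together with the frame identity $a_i\cdot\sum_{k\ne i}a_k=a_\bot$ reduces $s(b+a_i)$ to $sa_i=a_\bot$. The substantive clause asks, for every $b_1'\in[b,b+a_1]$, that $(s+b)+b_1'=(s+b)+b_j'$ with $b_j'=(b+a_j)(b_1'+b+c_{1j})$. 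Here I exploit Fact~\ref{mod}: since $a_1\cdot b=b_1$ by frame arithmetic, $x\mapsto x+b$ is an isomorphism $[b_1,a_1]\to[b,b+a_1]$, with inverse $y\mapsto y\cdot a_1$. Set $b_1''=a_1\cdot b_1'\in[b_1,a_1]$, so $b_1'=b+b_1''$. A direct modular computation shows $b_j'=b+a_j(b_1''+c_{1j})=b+b_j''$, where $b_j''$ is the $j$-th reduction element of $\Phi$ at level $b_1''$. Applying $j$-stability of $s$ for $\Phi$ at $b_1''$ gives $s+b_1''=s+b_j''$, and adding $b$ to both sides yields the desired equality.

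The lower-reduction case for $sb$ and $\Phi_{b_1}=(a_\bot,b_i,(b_i+b_j)c_{ij})$ is parallel. The meet identities $(sb)b_i=a_\bot$ follow at once from $sa_i=a_\bot$ together with $b_i\le a_i$. For the join, modularity gives $(sb)+b_1=(s+b_1)b$; combining $s+b_1\le s+a_1=a_1+a_j$ with the frame identity $(a_1+a_j)b=b_1+b_j$ bounds this above by $b_1+b_j$, while the hypothesis $s+b_1=s+b_j$ (taken at $b_1$ itself) yields $b_1,b_j\le(sb)+b_1$ for the reverse inequality. The third clause, for $b_1'\in[a_\bot,b_1]$, follows from the hypothesis $s+b_1'=s+b_j'$ applied at $b_1'$ itself, after meeting both sides with $b$ and invoking modularity.

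The main obstacle throughout is the bookkeeping identifying ``upper reduction of an upper reduction'' with a single upper reduction of $\Phi$. Carrying this out requires repeated invocation of Facts~\ref{mod} and \ref{mod2}, together with careful use of the frame identities of \S\ref{framme}, to ensure that the perspectivities carry the reduction recipes of $\Phi'$ and $\Phi_{b_1}$ to those of $\Phi$, so that Freese's hypothesis on $s$ can be applied at precisely the right height.
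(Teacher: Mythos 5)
The paper does not prove Fact~\ref{stab} itself; it quotes it from \cite[Lemma 2,3]{fm4} (the notion and the lemma going back to Freese \cite{fr}), so the comparison here is with your argument on its own terms. Your overall plan is the right one, and the substantive parts are correct: the identification of an iterated upper reduction with a single upper reduction of $\Phi$ via $b_1''=a_1b_1'$ and the computation $b_j'=b+b_j''$ are valid, as is the matching $b_j(b_1'+b_{1j})=a_j(b_1'+c_{1j})$ needed in the lower case; transporting the hypothesis by adding $b$ resp.\ meeting with $b$ then works, and your verification of the join/meet clauses for $\Phi_{b_1}$ is sound.

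There is, however, a genuine error in your verification of the meet clause for the upper reduction. You assert that $s(b+a_i)$ ``reduces to $sa_i=a_\bot$'' by frame arithmetic and independence. This is false: since $s\le a_1+a_j$ one has $s(b+a_1)=s(a_1+b_j)$, and this meet is in general a nonzero element lying below $b_1+b_j$ rather than below $a_1$. For the canonical frame of $L({}_{\mathbb{Z}}\mathbb{Z}^2)$ with $s=c_{1j}$ (which is $j$-stable) and $b_1=2\mathbb{Z}e_1$ one gets $s(a_1+b_j)=2\mathbb{Z}(e_1-e_j)\neq a_\bot$. The required conclusion $(s+b)(b+a_i)=b$ is still true, but it does \emph{not} follow from the first two stability clauses together with frame arithmetic; one must invoke the third clause $s+b_1=s+b_j$. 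For instance, with $t:=s(a_1+b_j)$ modularity gives $t+b_j=(s+b_j)(a_1+b_j)=(s+b_1)(a_1+b_j)=t+b_1=:u$; then $ua_1=b_1+ta_1=b_1$ and $u=u(a_1+b_j)=ua_1+b_j=b_1+b_j$, whence $t\le b$, and symmetrically for the index $j$. So the step as written would fail, but it is repairable within your framework, and the rest of the proposal stands.
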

.

\subsection{Skew frames}\lab{skew}
Using the above view (Subsection~\ref{framme})  on frames in modular lattices,
an equivalent definition of
a skew $(n+1,n)$-frame $\Psi=(\Phi',\Phi)$ in a modular lattice $L$ 
is that of a  configuration which is composed 
by the $n$-frame $\Phi$ and the $n+1$-frame
$\Phi'$ such that $\Phi'_{\neq n}$ is the reduction $\Phi_{a'_1}$,
in particular $a_\bot=a'_\bot$.
Observe that the perspectivity $\pi_{kl}$ 
of $\Phi$ induces the perspectivity $\pi_{kl}$ of $\Phi'_{\neq n}$.
We will deal with the cases $(3,2)$ and $(4,3)$, only. 

Our basic example of a skew $(4,3)$-frame is as follows: For a prime 
$p$ consider the $\mathbb{Z}$-module
$A$ with generators $e_i$, $i=1,2,3,4$ and relations $p^2e_i=0$
for $i\leq 3$, $pe_4=0$.
Then a skew $(4,3)$-frame in the submodule lattice $L(A)$ is obtained as follows:
Let
$a_\bot=0$,  $a_i=\mathbb{Z} e_i$, $c_{ij}=\mathbb{Z}(e_i-e_j)$,
    $a'_i=\mathbb{Z} pe_i$, $c'_{ij}=\mathbb{Z}(pe_i-pe_j)$,
for $i,j\leq  3$,
$a'_4=\mathbb{Z} e_4$, and $c'_{i4}=\mathbb{Z}(pe_i-e_4)$. \\

Dealing with a skew $(4,3)$-frame $\Psi$
we consider  $\Psi_{(3,2)}$ given by generators not involving index $2$ 
and $\Psi^{(3,2)}= a_2+ \Psi_{(3,2)}$. Observe that 
$\Psi_{(3,2)} \nearrow \Psi^{(3,2)}$ and that 
the relations of a skew $(3,2)$-frame are implied by
those of a skew $(4,3)$-frame, in both cases.

For $a_\bot \leq b \leq a'_1  \leq d \leq a_1$ in $L$
the \emph{lower reduction} $\Psi_{b,d}$ is the skew $(n+1,n)$-frame
which combines  the lower reductions $\Phi'_{b}$ and $\Phi_{d}$.
For $a_\bot \leq b \leq a'_1$ in $L$
the \emph{upper reduction} $\Psi^{b}$ 
combines the upper reductions $\Phi'^{b}$ and $\Phi^{b}$.

\subsection{Towers of skew frames}
\begin{obs}\lab{obs4}
Within $\mc{M}$ a presentation 
equivalent to that of 
an $n$-tower $\Omega(n)$
is given by  a list $\bar a$
combining
 the skew $(4,3)$-frames $\Psi^k=(\Phi'^k,\Phi^k)$ $(1\leq k \leq n)$
each consisting of 
the $4$-frame $\Phi'^k$ and the $3$-frame $\Phi^k$ where
 \[\Phi'^k=\bar a'^k=(a'^k_i,c'^k_{ij}\mid i,j\leq 4, i\neq j),\;
\Phi^k=\bar a^k=(a^k_i,c^k_{ij}\mid i,j\leq 3, i\neq j)\]
such that
\[(*)\; \Psi^k_{(3,2)} \nearrow (\Psi^k)^{(3,2)}
\nearrow \Psi^l_{(3,2)} \nearrow (\Psi^l)^{(3,2)}
\mbox{ for } 1\leq k <l \leq n.\]
\end{obs}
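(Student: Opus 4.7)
My plan is to exhibit the presentation $\Omega'(n)$ asserted in the observation (with $n$ skew $(4,3)$-frames $\Psi^k$ and perspectivities $(*)$) as a strengthening of $\Omega(n)$ by additional generators. Since $\Omega(n)$ realizes a full skew $(4,3)$-frame only at level $1$ (at higher levels, only a $2$-frame $\Phi^k$ is present), the additional generators of $\Omega'(n)$ are those completing $\Psi^k$ to a full skew $(4,3)$-frame for each $k > 1$, together with the cross-level perspectivity relations $(*)$. By the characterization of equivalence for presentations differing by added generators and relations (given right after the definition in Section~\ref{2}), it suffices to exhibit a transformation $\Omega(n) \to \Omega'(n)$ within $\mc{M}$ that fixes the common generators.

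The key idea is to transport the transverse generators $a^1_3, c^1_{13}, a'^1_4, c'^1_{14}$ of $\Psi^1$ up the tower via translation by $a^k_\bot$. Since $\Omega(n)$ is constructed (in Lemma~\ref{3tower} and Theorem~\ref{4tower}) as a product of $\Delta(n)$ with presentations carrying these transverse generators, Fact~\ref{prod} yields independence from the tower. For $k > 1$ I would set
\[a^k_3 := a^k_\bot + a^1_3, \qquad a'^k_4 := a^k_\bot + a'^1_4,\]
and define $c^k_{13}, c'^k_{14}$ analogously, intersecting with the appropriate joins to land in the correct intervals (in the style of the strengthenings in the proof of Theorem~\ref{4tower}). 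The remaining frame elements ($c^k_{23}$ in $\Phi^k$, and $a'^k_i, c'^k_{ij}$ for $i, j \leq 3$ in $\Phi'^k$) are then derived from these via the standard frame constructions of Subsection~\ref{framme}.

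To verify the skew $(4,3)$-frame axioms for $\Psi^k$ at each level $k > 1$, I would apply Fact~\ref{mod2} to decompose the relevant interval as a direct product of the tower component (housing $\Phi^k$) and the transverse components; the axioms then hold in each factor and Fact~\ref{mod2} assembles them. The perspectivity relations $(*)$ split into two types: the internal perspectivity $\Psi^k_{(3,2)} \nearrow (\Psi^k)^{(3,2)}$ is a standard feature of skew frames (Subsection~\ref{skew}); the cross-level perspectivity $(\Psi^k)^{(3,2)} \nearrow \Psi^{k+1}_{(3,2)}$ follows by extending the tower perspectivity $a^k_\top, a^k_2 \nearrow a^{k+1}_1, a^{k+1}_\bot$ to the transverse parts via Fact~\ref{mod2}, and non-consecutive cases follow by transitivity of $\nearrow$.

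The main obstacle will be the bookkeeping: the uniform treatment of the $3$-frame and $4$-frame parts at each level $k > 1$ requires many small verifications. Each individual check reduces to modularity combined with the product decomposition of Fact~\ref{mod2}, so the work is routine but voluminous once the translation definitions have been carefully fixed.
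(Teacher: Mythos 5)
Your proposal is correct and follows essentially the same route as the paper: the paper's proof also translates the transverse part $\Psi^1_{(3,2)}$ up to level $k$ by joining with $a^k_\bot$, uses the relative independence of $a^n_2, a^1_1, a^1_3, a'^1_4$ over $a^1_\bot$ (i.e.\ the product decomposition of Fact~\ref{mod2}) to see that $a^k_\bot+\Psi^1_{(3,2)}$ is again a skew $(3,2)$-frame combining with $a^k_2, c^k_{12}$ into $\Psi^k$, and obtains $(*)$ from the transitivity remarks following Fact~\ref{mod}. The only cosmetic difference is that the paper translates all transverse generators directly by $x\mapsto x+a^k_\bot$ without the extra meets you propose, since that map is already an isomorphism onto the target interval.
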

Observe that $\Psi^k_{(3,2)}$ consists of the $a^k_i,c^k_{ij},a'^k_i,c'^k_{ij}$
 where $i,j \neq 2$ and that 
$(\Psi^k)^{(3,2)} = a^k_2 + \Psi^k_{(3,2)} $. 
The exceptional role of index $2$ (linked to the ``upward direction'')
comes out of the construction of $n$-towers 
 and fits
to the application of \cite{fm4}.

\begin{proof}
Recall the definition of
 $n$-towers $\Omega(n)$ of skew frames in Subsection~\ref{tow4}
and observe that the $a^1_i, c^1_{ij}, a'^1_i, c'^1_{ij}$ 
form a skew $(4,3)$ frame $\Psi^1$,  that $a_2^n, a_1^1, a_3^1, a'^1_4$ 
are relatively independent over $a^1_\bot$, and that $a^k_1=a^k_\bot +a^1_1$.
Thus, $\Psi^1_{(3,2)}$ is a skew $(3,2)$-frame whence   so is
(for each $k>1$)
$a^k_\bot +\Psi^1_{(3,2)}$ which combines with $a^k_2, c^k_{12}$ 
to form a skew $(4,3)$-frame $\Psi^k$ such that $\Psi^1_{(3,2)} 
\nearrow  \Psi^k_{(3,2)}$. This
 proves $(*)$ in view of the remarks following Fact~\ref{mod}
and applies also to skew frame as considered in the preceding
Subsection.
\end{proof}

A model is obtained from the submodule lattice of the
free $\mathbb{Z}$ 
module with generators $e_1,e_2,e_3,e_4$ and relations
$p^2e_1=0$,
$ p^{3n-1} e_2=0$, $p^2 e_3=0$ and $pe_4=0$. Indeed, put
(where  $i=1,3$)
\[a_\bot^k= \mathbb{Z}p^{3(n-k)+2}e_2,\:
a_2^k = \mathbb{Z}p^{3(n-k)}e_2, \;c_{2i}^k = \mathbb{Z}(p^{3(n-k)}e_2-e_i)
\] 
\[a_i^k = a_\bot^k+ \mathbb{Z}e_i,\;
c^k_{13} = a_\bot^k+ \mathbb{Z}(e_1-e_3),\; 
c_{i2}^k =a_\bot^k+ \mathbb{Z}(e_i-p^{3(n-k)}e_2)\] 
\[a'^k_2 = \mathbb{Z}p^{3(n-k)+1}e_2, \;c'^k_{2i} = 
\mathbb{Z}(p^{3(n-k)+1}e_2-pe_i), 
\;c'^k_{24} = \mathbb{Z}(p^{3(n-k)+1}e_2-e_4),\] 
\[a'^k_i = a_\bot^k+ \mathbb{Z}pe_i,\;
c'^k_{13} = a_\bot^k+ \mathbb{Z}(pe_1-pe_3),\; 
c'^k_{i4} =a_\bot^k+ \mathbb{Z}(pe_i-e_4)
.\]

\subsection{Reduction of towers}
Given an $n$-tower $\Omega(n)=\bar a$ in a modular lattice $L$,
consider fixed  $m>0$ and $b,d \in L$
such that
\[ a^m_\bot \leq b \leq a'^m_1 \leq d \leq a^m_1.    \]
The \emph{lower reduction } $\Omega(n)_{b,d}$ of $\Omega(n)$ combines
the following reductions of skew-frames $\Psi^k=(\Phi'^k,\Phi^k)$
\[\begin{array}{lll}
 (\Phi'^k_{a^k_1 b},&\Phi^k_{a^k_1 d}) &\mbox{ for } 1\leq k<m\\
(\Phi'^m_{b},& \Phi^m_{d})\\
(\Phi'^k_{b+a^k_\bot},& \Phi^k_{d+a^k_\bot})
&\mbox{ for } m<k \leq n \end{array} \]
Given $b \in L$ 
such that \[a^m_\bot\leq b\leq a'^m_1\] the \emph{upper reduction } $\Omega(n)^{b}$ of $\Omega(n)$ combines
the following reductions of skew-frames
$\Psi^k=(\Phi'^k,\Phi^k)$
\[\begin{array}{lll}
 ((\Phi'^k)^{a^k_1 b},&(\Phi^k)^{a^k_1 b}) &\mbox{ for } 1\leq k<m\\
((\Phi'^m)^{b},& (\Phi^m)^{b})\\
((\Phi'^k)^{b+a^k_\bot},& (\Phi^k)^{b+a^k_\bot})
&\mbox{ for } m<k \leq n \end{array} \]
We speak of the lower resp. upper  reduction of $\Omega(n)$ \emph{induced} by the
reduction of $\Psi^k$.

\begin{obs}\lab{towred}
If the configuration $\Omega(n)$ is an $n$-tower of skew frames
$\Psi^k=(a'^k,c'^k_{ij},a^k,c^k_{ij})$ in 
a modular lattice, $L$, then so are any of its
lower reductions $\Omega(n)_{b,d}$ 
where $b,d \in L$   and any of its upper reductions
$\Omega(n)^{b}$ where $b \in L$. 
Moreover, if $\phi:L \to L'$ 
is a homomorphism into a modular lattice $L'$ such that
 $\phi(b)=\phi(a'^m_\bot)$ and $\phi(d)=\phi(a^m_1)$
resp. $\phi(b)= \phi(a^m_\bot)$
then $\phi(\Omega(n))=\phi(\Omega(n)_{b,d})$ resp.
$\phi(\Omega(n))=\phi(\Omega(n)^{b})$, as configurations in $L'$.  
\end{obs}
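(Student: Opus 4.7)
The plan is to verify both assertions by decomposing $\Omega(n)$ into its constituent skew $(4,3)$-frames $\Psi^k$ as in Observation~\ref{obs4}, checking the required conditions layer by layer, and then handling the homomorphism claim using that the reductions are given by explicit lattice terms in the original generators together with $b_1,d_1$.

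First I would check that for each $k$ the reduced configuration is itself a skew $(4,3)$-frame in $L$. For $k=m$ this is immediate from Subsection~\ref{skew} combined with Lemma~\ref{redframe}(1) applied to $\Phi'^m$ and $\Phi^m$. For the layers $k\neq m$, the task is to verify that the chosen reduction parameters lie in the appropriate intervals: for $k<m$ one must check that $a^k_\bot \leq a^k_1 b_1 \leq a'^k_1 \leq a^k_1 d_1 \leq a^k_1$, and for $k>m$ that $a^k_\bot \leq b_1+a^k_\bot \leq a'^k_1 \leq d_1+a^k_\bot \leq a^k_1$ (with the analogous one-sided inequalities for upper reduction). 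These are consequences of the perspectivity chain $\Psi^k_{(3,2)} \nearrow (\Psi^k)^{(3,2)} \nearrow \Psi^m_{(3,2)}$ together with Fact~\ref{mod}: intersecting a $b_1\in[a^m_\bot,a'^m_1]$ with $a^k_1$ for $k<m$ lands in $[a^k_\bot,a'^k_1]$, and joining with $a^k_\bot$ for $k>m$ does the symmetric job. Once the parameters are in range, each reduced $\Psi^k$ is a skew $(4,3)$-frame by Subsection~\ref{skew} applied to the reduced $4$- and $3$-frame pair.

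Next I would verify the tower condition $(*)$ of Observation~\ref{obs4} for the reduced tower. Here the point is that reduction commutes with the perspectivities $\nearrow$: parts (2)--(4) of Lemma~\ref{redframe} show that each $\nearrow$ arising from $\Phi$ is inherited by $\Phi^b_d$ after the appropriate shift, and together with the transitivity and inclusion-compatibility of $\nearrow$ noted after Fact~\ref{mod}, the chain $\Psi^k_{(3,2)} \nearrow (\Psi^k)^{(3,2)} \nearrow \Psi^l_{(3,2)} \nearrow (\Psi^l)^{(3,2)}$ transports to the reduced skew frames, because the parameters chosen for layers $k,l$ were precisely the ones perspective to $b_1,d_1$ under the chain $(*)$ in $\Omega(n)$.

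Finally, for the homomorphism assertion, note that every element of $\Omega(n)_{b_1,d_1}$ (resp.\ $\Omega(n)^{b_1}$) is obtained by applying the fixed reduction setups of Subsection~\ref{skew} (built from Lemma~\ref{redframe} and Subsection~\ref{2.6}-style substitution) to the generators of $\Omega(n)$ and to $b_1,d_1$. When $\phi(b_1)=\phi(a'^m_\bot)=\phi(a^m_\bot)$ and $\phi(d_1)=\phi(a^m_1)$, the tower identities $a^k_1\cdot a^m_\bot=a^k_\bot$, $a^k_1\cdot a^m_1=a^k_1$ for $k<m$ and $a^m_\bot+a^k_\bot=a^k_\bot$, $a^m_1+a^k_\bot=a^k_1$ for $k>m$, all forced by $(*)$ and Fact~\ref{mod}, imply that after applying $\phi$ the layerwise reduction parameters collapse to $\phi(a^k_\bot)$ and $\phi(a^k_1)$; Lemma~\ref{redframe}(5) then gives $\phi(\Psi^k_{b'_1,d'_1})=\phi(\Psi^k)$ resp.\ $\phi((\Psi^k)^{b'_1})=\phi(\Psi^k)$ componentwise, and the global identity $\phi(\Omega(n))=\phi(\Omega(n)_{b_1,d_1})$ follows. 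The principal bookkeeping obstacle is precisely the verification of those tower identities and of the range conditions in the first step; once these are in place, the rest is routine application of the cited reduction lemmas.
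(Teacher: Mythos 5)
The paper states Observation~\ref{towred} without proof, so there is nothing to compare against line by line; your plan supplies exactly the verification the author leaves implicit, and its overall architecture is right: check the layerwise parameter ranges via the perspectivities of $(*)$ in Observation~\ref{obs4} (the identities $a^k_1a^m_\bot=a^k_\bot$, $a^k_1a'^m_1=a'^k_1$ for $k<m$ and their join counterparts for $k>m$ do follow from $\nearrow$ exactly as you say), invoke Lemma~\ref{redframe} and Subsection~4.4 for each reduced layer being a skew frame, use Lemma~\ref{redframe}(2)--(4) plus transitivity of $\nearrow$ for the tower relations, and observe that the homomorphism claim reduces to the fact that every component of the reduced tower is a lattice term in the generators and $b_1,d_1$.

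The one step that does not close as written is your appeal to Lemma~\ref{redframe}(5) in the homomorphism part. That item concerns the two-parameter reduction $\Phi^b_d$ and says it is trivial when $b=a_\bot$ and $d=a_1$; but the lower reduction of a skew frame combines the \emph{single-parameter} lower reductions $\Phi'_{b_1}$ and $\Phi_{d_1}$, and by the definition in Subsection~4.2 the lower reduction $\Phi'_{b_1}$ (with $b_1$ ranging in $[a_\bot,a'_1]$) is the identity precisely when $b_1=a'_1$, whereas $b_1=a_\bot=a'_\bot$ collapses $\Phi'$ (since then $a'_j(b_1+c'_{1j})=a'_jc'_{1j}=a_\bot$). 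So under the stated hypothesis $\phi(b_1)=\phi(a'^m_\bot)$ your argument that "the parameters collapse to $\phi(a^k_\bot)$ and $\phi(a^k_1)$, hence Lemma~\ref{redframe}(5) applies" does not literally give $\phi(\Phi'^k_{\dots})=\phi(\Phi'^k)$. This mismatch is already present in the paper (Lemma~\ref{charp} likewise asserts ${\bf b}_1=a_\bot$ forces $\Psi_{{\bf b}_1,{\bf d}_1}=\Psi$), so you are reproducing the paper's convention rather than inventing an error; but a complete proof must either correct the triviality condition to $\phi(b_1)=\phi(a'^m_1)$, or make explicit the (different) parametrization of skew-frame lower reductions from \cite{fm4} under which $b_1=a_\bot$ is the trivial value, and then verify the collapse of the transported parameters against \emph{that} convention. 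The upper-reduction half is unaffected, since $\Phi^{b_1}$ genuinely is $\Phi$ when $b_1=a_\bot$.
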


\section{Coordinates and characteristic}\lab{5}

\subsection{Coordinate ring}\lab{coor}
Following von Neumann \cite{neu} (cf. Freese \cite{fr1,fr2,fr}
and \cite[Lemma 6]{cons}) with
any $4$-frame $\Phi$
in a modular lattice $L$ and choice of $3$ different indices  (here we use
 $1,3,4$)
one obtains a (\emph{coordinate}) ring   $R(\Phi,L)$ with unit $c_{13}$
and zero $a_1$, the elements of which are the $r \in L$
such that $ra_3=a_\bot$ and $r+a_3=a_1+a_3$.
More precisely, there are 
binary lattice terms $x \oplus_{\bar z} y$ and $x \otimes_{\bar z} y$
and a unary term $\ominus_{\bar z} x$ 
defining these coordinate rings. Here, one has $\bar z=(z_i,z_{ij}|
i,j\neq 2)$ corresponding to the $3$-frame $(a_i,c_{ij}| i,j\neq 2)$.
For   given $L$ and $\Phi$ these rings are isomorphic
for any choice
of the triple of indices -  via the perspectivities
resp.  compositions thereof.

If $L$ embeds into the the subgroup lattice of an abelian group $A$
and if $\Phi=(a_i,c_{ij}\mid i,j \neq 2)$ is  a $3$-frame in $L$ then the above 
definitions apply to obtain the  ring $R(\Phi,L)$,
embedded into the endomorphism ring  of the associated subquotient of $A$.

An element $r$ of $R(\Phi,L)$ is invertible if and only if $ra_1=a_\bot$
and $r+a_1=a_1+a_3$; these form the group $R^*(\Phi,L)$
of units in the ring $R(\Phi,L)$.
 Moreover, there is a lattice  term
$t(x,\bar z)$ such that $t(r,\bar a)$ 
is the inverse of $r$ if $r$ is invertible.

\subsection{Stable elements}
Obviously, $3$-stable elements
are invertible.\\
 Again, the following
crucial tool  is due to Ralph Freese \cite{fr}.

\begin{lem}\lab{stabgp}
For a modular lattice $L$ containing a $4$-frame $\Phi$ as above one has the following.
 \begin{itemize}
\item[(i)] The  elements of $L$
 which are $3$-stable for $\Phi$ in $L$ 
form a subgroup $R^\#(\Phi,L)$ of the group  $R^*(\Phi,L)$ of units.
\item[(ii)] For each $b\in L$ with $a_\bot\leq b \leq a_1$
the map $r \mapsto r+  \bot^{\Phi^{b}}$
is a homomorphism  $\beta_{b}:R^\#(\Phi,L) \to R^\#(\Phi^{b},L)$.
 \item[(iii)] If $r$ is $3$-stable for $\Phi$ and
 $b=a_1(r +c_{13})$ then
 $\beta_{b}(r) =
c_{13}^{\Phi^{b}}$ is the unit  of  $R^\#(\Phi^{b},L)$.
\end{itemize}
\end{lem}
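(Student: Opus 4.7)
The plan is to prove the three claims in the order (i)-containment, (ii), (i)-closure, (iii), using Fact~\ref{stab} and the term-definability of the coordinate ring operations from Subsection~\ref{coor} as the main tools. The inclusion $R^\#(\Phi,L)\subseteq R^*(\Phi,L)$ is immediate: any $3$-stable $r$ satisfies $ra_3=a_\bot$ and $r+a_3=a_1+a_3$ by the very definition of stability, placing $r$ in the group of units, so the subgroup property reduces to closure under multiplication $\otimes$ and the inversion term $t(x,\bar z)$.

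For (ii), Fact~\ref{stab} immediately yields that $\beta_{b_1}(r)=r+\bot^{\Phi^{b_1}}$ is $3$-stable for $\Phi^{b_1}$, so $\beta_{b_1}$ does land in $R^\#(\Phi^{b_1},L)$. For multiplicativity I would exploit that $\otimes_{\bar z}$ is a lattice term in the $3$-frame generators $\bar z$ of $\Phi_{\neq 2}$, while the corresponding generators of $(\Phi^{b_1})_{\neq 2}$ arise from those of $\Phi_{\neq 2}$ by joining with $\bot^{\Phi^{b_1}}$. Applying Fact~\ref{mod2} to the relatively independent frame components, and invoking the stability identities of $r$ and $s$ at those subterms of the multiplication formula that involve upper reductions by the inputs themselves, one derives
\[ \beta_{b_1}(r\otimes s) = \beta_{b_1}(r) \otimes \beta_{b_1}(s),\]
with the product on the right taken in $R(\Phi^{b_1},L)$, and analogously for the inversion term.

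For the closure part of (i), I would verify stability of $r\otimes s$ by a variant of the multiplicativity argument: unfold the lattice term for $\otimes_{\bar z}$ and use the stability identities of $r,s$ to rewrite each intermediate occurrence of a sum $u+b_1'$ as $u+b_3'$ (for $u\in\{r,s\}$ and arbitrary reduction base $b_1'$), then invoke Fact~\ref{mod} on the perspectivities between the complementary intervals to interchange $b_1'$ and $b_3'$ throughout the whole expression for $r\otimes s$; this forces $(r\otimes s)+b_1'=(r\otimes s)+b_3'$, witnessing stability. Inversion is handled identically via $t(x,\bar z)$. For (iii), with $b_1=a_1(r+c_{13})$ the modular law gives
\[ r+b_1 = r+a_1(r+c_{13}) = (r+c_{13})(r+a_1) = r+c_{13},\]
since $r+a_1=a_1+a_3\geq r+c_{13}$; symmetrically $c_{13}+b_1=r+c_{13}$. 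Adjoining the remaining reduction generators yields $r+\bot^{\Phi^{b_1}}=c_{13}+\bot^{\Phi^{b_1}}=c_{13}^{\Phi^{b_1}}$, the unit of $R(\Phi^{b_1},L)$.

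The principal obstacle throughout is the interplay between stability and the (fairly intricate) von Neumann multiplication formula: one must track precisely how upper reductions commute with the joins, meets and auxiliary elements in that formula, invoking the stability identity at exactly those places where perspectivities realize composition. Once this is mastered for (ii), the closure in (i) and the identity (iii) reduce to direct modular-lattice computations as sketched above.
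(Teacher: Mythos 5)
Your treatment of (iii) is correct and in fact takes a slightly different route from the paper's. You compute directly that $r+b_1=(r+a_1)(r+c_{13})=r+c_{13}$ and, symmetrically, $c_{13}+b_1=(a_1+c_{13})(r+c_{13})=r+c_{13}$, whence $r+\bot^{\Phi^{b_1}}=c_{13}+\bot^{\Phi^{b_1}}=c_{13}^{\Phi^{b_1}}$. The paper only establishes the inequality $r+b\geq c_{13}+b$ (with $b=\bot^{\Phi^{b_1}}$) by the same modular-law computation and then concludes equality from the observation that, by (ii), both sides are complements of $a_3+b$ in $[b,b+a_1+a_3]$, so that comparable complements coincide. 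Your version avoids the appeal to (ii) at this point, which is a small gain.

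For (i) and (ii), however, there is a genuine gap. The paper does not prove these at all: it cites them as Lemma 1.3--6 of Freese \cite{fr}, where they occupy a substantial chain of lemmas. Your sketch --- unfold the lattice term for $\otimes_{\bar z}$, apply Fact~\ref{mod2} to the independent components, and invoke the stability identities at exactly those subterms where perspectivities realize composition --- names the difficulty rather than resolving it. In particular, nothing you wrote actually verifies that $(r\otimes s)+b_1'=(r\otimes s)+a_3(b_1'+c_{13})$ for an \emph{arbitrary} $b_1'\in[a_\bot,a_1]$, which is the content of closure in (i): the stability of $r$ and $s$ controls only how $r$ and $s$ individually interact with the reduction, not how the auxiliary meets and joins in von Neumann's multiplication formula do, and it is precisely this interaction that Freese's lemmas handle. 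The same applies to multiplicativity of $\beta_{b_1}$ in (ii). What you do establish of (i)/(ii) is the containment $R^\#(\Phi,L)\subseteq R^*(\Phi,L)$ and, via Fact~\ref{stab}, that $\beta_{b_1}$ maps stable elements to stable elements; the rest should either be proved in detail or cited, as the paper does.
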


\begin{proof}
(i) and (ii) are Lemma 1.3-6 of \cite{fr}.
For convenience, we prove (iii). With $b=\bot^{\Phi^{b}}$ 
 one has $r+b= r+b+b= (a_1+r)(c_{13}+r)+b \geq c_{13}+b$ 
and equality follows since by (ii) both   are complements of $a_3+b$
in $[b,b+a_1+a_3]$.
\end{proof}

\subsection{Characteristic}
With the term $x \oplus_{\bar z} y$ of Subsection 5.1,
define recursively, $1\otimes_{\bar z} z_{14}=z_{14}$ and
$(n+1)\otimes_{\bar z} z_{14} = z_{14}\oplus_{\bar z} (n \otimes_{\bar z} z_{14})$.
In the sequel, $p$ will be a fixed prime.
The $4$-frame $\Phi=\bar a$ has \emph{characteristic} 
$p$  if $p\otimes_{\bar a} c_{14}=a_1$. 
Ralph Freese \cite{fr1} has shown that, for any 
frame $\Phi=\bar a$ in a modular lattice  $L$,
the frame $\Phi_{a_1(p\otimes_{\bar a} c_{14})}$ has characteristic
$p$ - and equals $\Phi$ if $\Phi$ has characteristic $p$, already.

Let $(\bar z',\bar z)$ denote a list of variables
to be used for substituting skew $(4,3)$-frames.
In \cite[p. 516]{fm4}, a term $p_{32}(\bar z)$ has been defined
and a skew $(4,3)$-frame $(\Phi',\Phi)=(\bar a',\bar a)$
 has been called of  
\emph{characteristic} $p \times p$ if $\Phi'$ is
of characteristic $p$ and $p_{32}(\bar a)\geq a_3'$ 
and $a_3+ p_{32}(\bar a) =a'_2+ p_{32}(\bar a)
=a'_2+a_3$. The
following is \cite[Lemma 9]{fm4}
(in  the proof given, there,
observe that  $b_3 \geq a'_3$ since $p_{32} \geq a'_3$). 

. 

\begin{lem}\lab{charp} There are terms $b^*(\bar z',\bar z)$
and $d^*(\bar z',\bar z)$ such 
that for any skew $(4.3)$-frame $\Psi=(\Phi',\Phi)$ in a modular lattice $L$
one has 
\[a_\bot \leq 
{\bf b}:=b^*(\bar a', \bar a) \leq a'_1 \leq
{\bf d}:=d^*(\bar a',\bar a) \leq a_1\]
 and
obtains  $\Psi_{{\bf b},{\bf d}}$ of characteristic $p\times p$.
Moreover, if $\Psi$ has characteristic $p\times p$
then ${\bf b}=a_\bot$ and ${\bf d}=a_1$,
that is $\Psi_{{\bf b},{\bf d}}=\Psi$.
\end{lem}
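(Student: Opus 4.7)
The condition of characteristic $p \times p$ splits cleanly into two independent clauses: (i) the $4$-frame $\Phi'$ should have characteristic $p$, and (ii) the element $p_{32}(\bar a)$ should lie above $a'_3$ while satisfying $a_3 + p_{32}(\bar a) = a'_2 + p_{32}(\bar a) = a'_2 + a_3$. The skew-frame lower reduction $\Psi_{b_1,d_1}$ correspondingly decouples into a modification of $\Phi'$ controlled by ${\bf b}_1 \in [a_\bot, a'_1]$ and a modification of $\Phi$ controlled by ${\bf d}_1 \in [a'_1, a_1]$. So my plan is to define ${\bf b}_1$ and ${\bf d}_1$ separately, each by a Freese-style reduction term, chosen so as to force the corresponding clause and to collapse to the trivial value ($a_\bot$ or $a_1$ respectively) precisely when that clause already holds for $\Psi$.

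For clause (i), I would use the result recalled just above the lemma: Freese's term $a'_1 \cdot (p \otimes_{\bar a'} c'_{14})$ gives a reduction of $\Phi'$ to characteristic $p$, trivial iff $\Phi'$ already has characteristic $p$. Converted into the lemma's normalization (so that the trivial value is $a_\bot$ rather than $a'_1$), this produces a lattice term $b_1(\bar z', \bar z)$ in the variables corresponding to $\Phi'$. For clause (ii), I would imitate Freese's technique inside the $3$-frame $\Phi$: since $p_{32}(\bar a)$ is an explicit coordinate-ring expression in $\bar a$, the required inequalities $p_{32}(\bar a) \geq a'_3$ and $a_3 + p_{32}(\bar a) = a'_2 + p_{32}(\bar a) = a'_2 + a_3$ can be rephrased as a condition of agreement with $a'_3$ in the interval $[a'_1, a_1]$. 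A Freese-type lower reduction with parameter ${\bf d}_1 = d_1(\bar a', \bar a)$ built from $p_{32}$, $a'_3$, $a'_2$, and $a_3$ will force the reduced $p_{32}$ into the required position; the author's parenthetical ``$b_3 \geq a'_3$ since $p_{32} \geq a'_3$'' indicates exactly the compatibility check needed here.

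The verification then runs as follows. The interval inequalities $a_\bot \leq {\bf b}_1 \leq a'_1 \leq {\bf d}_1 \leq a_1$ are built into the definitions of $b_1, d_1$. By Lemma~\ref{redframe} the skew-frame reduction $\Psi_{{\bf b}_1, {\bf d}_1}$ is again a skew $(4,3)$-frame; its $4$-frame part has characteristic $p$ by Freese applied via ${\bf b}_1$, and its $3$-frame part together with the new $a'_2, a'_3$ satisfies the $p_{32}$-condition by the choice of ${\bf d}_1$. Triviality is automatic: if $\Psi$ already has characteristic $p \times p$, then Freese's parameter for $\Phi'$ collapses to $a_\bot$ and the $p_{32}$-forcing parameter for $\Phi$ collapses to $a_1$, so $\Psi_{{\bf b}_1, {\bf d}_1} = \Psi$. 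The main obstacle is clause (ii): unlike clause (i), which is supplied directly by Freese, the $p_{32}$-condition requires constructing a lattice term that transports a specific coordinate-ring identity through the reduction and whose triviality is equivalent to that identity already holding. Making this construction syntactic and checking, via the behaviour of $\oplus_{\bar z}$ and $\otimes_{\bar z}$ under reduction, that the reduced $p_{32}$ really does land above the reduced $a'_3$ is the delicate step.
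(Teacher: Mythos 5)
The paper does not actually prove this lemma: it is quoted as \cite[Lemma 9]{fm4}, the only addition here being the parenthetical remark repairing a step of the proof given there (``observe that $b_3\geq a'_3$ since $p_{32}\geq a'_3$''). So your sketch has to stand on its own, and it does not. The entire content of the lemma is the existence of two \emph{explicit} lattice terms $b_1(\bar z',\bar z)$ and $d_1(\bar z',\bar z)$. You supply a genuine candidate only for the first (Freese's $a'_1(p\otimes_{\bar a'}c'_{14})$, which is indeed the right ingredient for forcing characteristic $p$ on $\Phi'$); for the second you say only that a reduction parameter ``built from $p_{32}$, $a'_3$, $a'_2$, and $a_3$ will force the reduced $p_{32}$ into the required position,'' and then concede that making this syntactic and verifying it is ``the delicate step.'' That delicate step \emph{is} the lemma; deferring it leaves the statement unproved.

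Moreover, the premise that the two clauses of ``characteristic $p\times p$'' decouple --- clause (i) handled by ${\bf b}_1$ alone, clause (ii) by ${\bf d}_1$ alone --- is not correct. The condition $p_{32}(\bar a)\geq a'_3$ and $a_3+p_{32}(\bar a)=a'_2+p_{32}(\bar a)=a'_2+a_3$ mixes elements of the $3$-frame $\Phi$ (through $p_{32}(\bar a)$ and $a_3$, which change under the $d_1$-reduction) with elements of the $4$-frame $\Phi'$ (through $a'_2$ and $a'_3$, which change under the $b_1$-reduction); after performing both reductions one must check the condition for the \emph{new} $a'_2,a'_3$ against the \emph{new} $p_{32}$, and this interaction is precisely what the paper's parenthetical remark addresses. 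Finally, be careful with the triviality direction: for a lower reduction $\Phi'_{b_1}=(a_\bot,b_i,b_{ij})$ the parameter inducing the identity is the top $a'_1$ of the admissible interval, not $a_\bot$ (the ``${\bf b}_1=a_\bot$'' in the statement looks like a slip for ${\bf b}_1=a'_1$), so your unexplained ``conversion of normalization'' --- turning a parameter that collapses to $a'_1$ in the characteristic-$p$ case into one that collapses to $a_\bot$ while inducing the same lower reduction --- cannot work as described.
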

As in Freese's result, one can derive projectivity 
but, in contrast, it appears unlikely that characteristic
$p\times p$ is preserved under reductions. Though, the existence of stable elements is preserved (see Fact~\ref{stab}).
Recall the term $g_1^*(\bar z',\bar z)$ from 
 \cite[Cor.13]{fm4} and, applying the ``perspectivity term'' $\hat{\pi}_{23}$
 define
\[g^+(\bar z',\bar z)= \hat{\pi}_{23}(g_1^*(\bar z',\bar z),\bar z').\]
\begin{lem}\lab{stabpp}
 For any skew $(4,3)$-frame $\Psi=(\Phi',\Phi)=(\bar a',\bar a)$  
of characteristic $p\times p$ in a modular lattice $L$, 
 one has  
$g^+(\Psi)=g^+(\bar a',\bar a)$
an
 element of $L$ which is $2$-stable
for $\Phi'$. \end{lem}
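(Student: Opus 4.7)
The plan is to combine \cite[Cor.~13]{fm4} with the perspectivity calculus for $4$-frames, exactly as the parenthetical remark in the paper suggests. By \cite[Cor.~13]{fm4} there is a lattice term $g_1^*(\bar z',\bar z)$ with the property that, for every skew $(4,3)$-frame $\Psi=(\Phi',\Phi)=(\bar a',\bar a)$ of characteristic $p\times p$ in a modular lattice $L$, the element $g_1^*(\bar a',\bar a)\in L$ is $2$-stable with respect to the $4$-frame $\Phi'$. I would invoke this black box rather than reconstruct its proof.

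Next, recall from Subsection~\ref{framme} that for any $4$-frame $\Phi'=\bar a'$ the element
\[ c'_{23}=(a'_2+a'_3)(c'_{12}+c'_{13}) \]
is a lattice polynomial in the generators of $\Phi'$, so the perspectivity
\[ \pi_{23}(x)=(x+c'_{23})\cdot(a'_1+a'_2+a'_4) \]
between $[a_\bot,\sum_{i\neq 2}a'_i]$ and $[a_\bot,\sum_{i\neq 3}a'_i]$ is also a lattice term in $x$ and $\bar a'$. As explicitly noted in the paragraph preceding Fact~\ref{stab}, these perspectivities transport stability: if $s$ is $j$-stable for $\Phi'$, then $\pi_{jk}(s)$ is $k$-stable for $\Phi'$.

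Combining the two, set
\[ g^+(\bar z',\bar z):=\pi_{23}\bigl(g_1^*(\bar z',\bar z)\bigr), \]
where the right-hand side is expanded out using the lattice terms above. Then for any skew $(4,3)$-frame $\Psi=(\bar a',\bar a)$ of characteristic $p\times p$ in $L$, the element $g_1^*(\bar a',\bar a)$ is $2$-stable for $\Phi'$, and hence $g^+(\bar a',\bar a)=\pi_{23}\bigl(g_1^*(\bar a',\bar a)\bigr)$ is $3$-stable for $\Phi'$, proving the lemma. The entire content of the argument sits inside \cite[Cor.~13]{fm4}; the rest is a purely formal composition of lattice terms. The only point requiring mild care is to confirm that $\pi_{23}$ is a \emph{lattice} term in $\bar a'$ (not merely an order-theoretic map), which is immediate from the standard frame formulas recalled in Subsection~\ref{framme}, so no serious obstacle is expected.
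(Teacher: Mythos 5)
Your proposal is exactly the paper's argument: the paper's entire proof is the one sentence preceding the lemma, namely to take the term $g_1^*$ that is $2$-stable for $\Phi'$ from \cite[Cor.~13]{fm4} and push it through the perspectivity exchanging indices $2$ and $3$; you have simply spelled out why this is a composition of lattice terms. One small caveat: with the paper's own convention $\pi_{kl}(x)=(x+c_{kl})\sum_{i\neq l}a_i$, the explicit formula you wrote, $(x+c'_{23})(a'_1+a'_2+a'_4)$, meets with $\sum_{i\neq 3}a'_i$ and therefore fixes a $c'_{12}$-like element (for $s\leq a'_1+a'_2$ one gets $(s+c'_{23})\sum_{i\neq 3}a'_i\leq a'_1+a'_2$ back again); the map that actually carries a $2$-stable element to a $3$-stable one is $x\mapsto(x+c'_{23})(a'_1+a'_3+a'_4)$, i.e.\ $\pi_{32}$ in the paper's indexing — an index slip the paper's remark shares, and which does not affect the substance of the argument.
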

\begin{proof}
According to  \cite[Cor.13]{fm4}
one has $g_1^*(\bar a',\bar a)$ 
a $2$-stable element of $R(\Phi')$ and the claim follows via perspectivity.  
\end{proof}

\section{Glueing constructions}

Extending early work of Dilworth 
and Hall, analysis and construction of lattices $L$ as unions of interval
sublattices have been studied by several authors, see
\cite{kleb,dh} and \cite[Section 3]{fm4}), also 
 \cite{df} for a survey. Here, we need only the special case 
where $L$ and the ``skeleton'' $S$
 are  finite
modular lattices. Though, without additional effort,
one can allow arbitrary bounded lattices $L$ and
skeletons $S$ which are modular of finite height.

\subsection{Glueing construction of Dilworth and Hall}\lab{Dil}
Given  intervals $L_i=[a_i,b_i]$, $i=1,\ldots ,n$,  in a modular lattice, $L$,
such that $a_i\leq a_{i+1}$ and $b_i \leq b_{i+1}$  for $1\leq i<n$,
the union of these intervals is a sublattice of $L$ and
one has isomorphisms  $\alpha_i:[c_i,b_i] \to [a_{i+1}, d_i]$, $i<n$,
with $\alpha_i(x)=x+a_{i+1}$ 
(and inverse $\alpha_i^{-1}(y)= b_iy$)
 where $c_i=b_ia_{i+1}$ and $d_i = b_i+a_{i+1}$.

Conversely,
given 
pairwise disjoint  modular lattices $L_i=[a_i,b_i]$ ($i\leq n$)
and  
isomorphisms $\alpha_i:[c_i,b_i]_{L_i} \to [a_{i+1]},d_{i+1}]_{L_{i+1}}$ ($i<n$)
where $c_i \in L_i$ and $d_{i+1} \in L_{i+1}$
there is a modular lattice $L$, the \emph{Dilworth-Hall glueing}, which is the union
of interval sublattices $L_i$,  related as above.

Also, one obtains a homomorphic image of $L$
in which the intervals $[c_i,b_i]$ and $[a_{i+1},d_i]$ 
are identified via $\alpha_i$.

\subsection{Decomposition of lattices as glued sums}
In the sequel let $S$ a modular lattice of finite height
with bottom $\zero$ and top $\one$. We
write $x \prec y$ if $x$ is a lower cover of $y$ in $S$.

Consider a  lattice $M$,
a join embedding $\sigma:S\to L$, and a meet embedding
$\pi:S\to L$ such $\sigma y \leq \pi x$ for all $x\prec y$ in $S$.-
Then the union $L$ of  interval sublattices
$L_x=[\sigma x, \pi x]$, $x \in S$. of $L$ is a sublattice of $M$.
 $L$ is called an $S$-\emph{glued sum} of the  $L_x$, $(x \in S)$.
$L$ has greatest element $1=\pi(1)$ and smallest element 
$0=\sigma(0)$.

If $L'$ is another $S$ glued sum given by $\sigma',\pi'$
then $L$ is isomorphic to $L'$ 
if and  if there are isomorphisms $\chi_x:L_x \to L'_x$ such that 
$\chi_x$ and $\chi_y$ both induce, for $x \prec y$,  the same isomorphism
of $L_x \cap L_y$ onto $L'_x\cap L'_y$.

Clearly, if $T=[u,v]$ is an interval sublattice of $S$ then
$\bigcup_{x \in T} L_x$ is a $T$-glued sum and an interval
 sublattice of $L$.
This allows to use induction over the height of $S$.

Observe that given $x < y$ in $S$, $a \in L_x$, and $b \in L_y$, 
one has $a\leq b$ if and only if for some/each
chain $x=x_1 \prec x_2 \ldots \prec x_n=y$ in $S$
one has $a_i \in L_{x_i}\cap L_{x_{i+1}}$
\[ a\leq_{x_1}a_1 \leq_{x_2} a_2 \ldots \leq_{x_{n-2}} a_{n-1}\leq_{x_n} a_n=b.    \]

\begin{claim}\lab{simpglu}
 An $S$-glued sum  $L$ is simple of so are the $L_x$ $(x \in S)$.
\end{claim}
\begin{proof}
For  any homomorphism $\phi$ of $L$ onto $L'$
one has $\phi(\sigma(x))=\phi(\pi(x))$ for all $x \in S$
since the $L_x$ are simple. 
If $x\prec y$ in $S$ then  $\sigma y \leq \pi x$ 
whence $\phi(\sigma(y)) \leq \phi(\pi(x)) =\phi(\sigma(x))$
and it follows $\phi(\sigma(x))=\phi(\sigma(y))$ for all $x \leq y$,
in particular $\phi(0) =\phi(\sigma(0)) =\phi(\sigma(1)) =\phi(\pi(1))=
\phi(1)$.

\end{proof}

\subsection{Modularity}
\begin{lem}\lab{modglu}
Any $S$-glued sum $L$ of modular lattices   $L_x$ is modular.
\end{lem}
\begin{proof} Consider $b\leq a$ and $c$ in $L$ such that 
$ac \leq b\leq a \leq b+c$. Let $a \in L_u$, $b \in L_v$,  and $c \in L_w$.
From $\sigma v \leq b \leq a \leq \pi u$ one obtains $\sigma(uv) \leq \sigma v \leq b \leq (\pi u)(\pi v) =\pi(uv)$ which allows to assume $v \leq u$. 
Now $\pi(v+w) \geq b+c \geq a$ so that w.l.o.g. $v+w=\one$. Dually,
one may assume $uw=\zero$ whence $v=u$ by modularity of $S$. 
If $\zero \prec  x <u$ then $w \prec x+w <\one$  and, by induction,
$[\sigma x, \one]$  and $[\zero, \pi(w+x)]$ are modular.
whence  Dilworth-Hall applies. Similarly, if $\zero \prec x <w$.
This leaves the case that $u,w$ are atoms or $\zero$.
If, say, $u=\zero$ then one has the Dilworth-Hall
glueing $[0,\pi u] \cup [\sigma w, 1]$ whence modularity of $L$.
If both $u,w$ are atoms then $[0, \pi u]$ and $[\sigma w, 1]$
 are modular by Dilworth-Hall and then so is $L$.

To prove the second claim, choose $x\prec \one$;
by inductive hypothesis, there is a maximal chain

\end{proof}

\subsection{Calculations in glued sums}
We write $\sigma x=0_x$ and    $a+_x b=a+ b$ for $a,b \in L_x$.
For maximal chains $C$ in intervals $[x,z]$ of $S$ 
and $a \in L_x$ we define, recursively,
$a +_C 0_x =a $ if $C=\{x\}$ and 
\[ a +_{C} 0_z =(a +_D 0_u)+_u 0_z \mbox{ where } D=C\setminus \{z\}
\mbox{ and } u\prec z, u \in C     \]
\[ a+_C c= (a+_C 0_z)+_z c \mbox{ for } c \in L_z.\]
Now, for $a \in L_x$, $b \in L_y$,
$z=x+y$, and maximal chains $C$ in $[x,z]$ and $D$ in $[y,z]$ define 
\[ a+_{C,D} b = (a+_C 0_z)+_z (b+_D0_z).  \] 
The following is obvious, as is its dual.
\begin{claim}\label{join}\begin{itemize}
\item[(i)] $a+_C c= a+c$ for all $x\leq z$ in $S$,  $a\in L_x$, $c \in L_z$
and maximal chains $C$ in $[x,z]$.
\item[(ii)] $a+_{C,D} b= a+b$ for all $x,y$ in $S$,  $a\in L_x,$, $b \in L_y$,
and maximal chains $C$ in $[x,x+y]$, $D$ in $[y,x+y]$.
\end{itemize}
The dual results hold for meets.
\end{claim}

For our main result it will be crucial that certain calculations 
can be carried out in a partial sublattice of $L$.
We introduce some notation which will be useful later.
For $x\prec y$ in $S$ we put
$0_{y,x}= \sigma y$ and $1_{y,x}=\pi x$.

Now, given $P\subseteq S^2$ where $x\prec y$ for all $(x,y)\in P$ 
let $L_{|P}= \bigcup_{(x,y)\in P} (L_x\cup L_y)$
endowed with the partial operations $ a+_Pb=c$ if and only if
$a,b \in L_x$ for some $(x,y) \in P$ and $c=a+_x b$
 or if $a \in L_x$, $b=0_y$, and $c=a+_x 0_{y,x}$
for some $(x,y) \in P$ or, similarly,   interchanging $a$ with $b$. 
Partial meets are defined, dually.    
In view of Claim~\ref{join} any calculation in $L$
can be composed by  calculations in $L_{|P}$ where 
$(x,y)\in P$ for all $x\prec y$.

\subsection{Glueing of  sets}
Again, $S$ is a finite height modular lattice. In particular,
$S$ is a directed graph with edges $(x,y)$ where $x \prec y$.
Thus, a chain $x_1\prec x_2 \ldots \prec x_n$ 
is a (directed) path from $x_1$ to $x_n$.

A \emph{glueing} of a family  $L_x(x \in S)$ of pairwise disjoint sets
is given by injective partial maps $\gamma_{yx}:L_x \to L_y$,
 $x\prec y$,  such that
\[(*)\quad \gamma_{x+y,x} \gamma_{x,xy} = \gamma_{x+y,y} \gamma_{y,xy} 
\mbox{ for } xy\prec x,y \prec x+y.  \]   
We put $\gamma_{x,x}=\gamma_C$ the identity on $L_x$ where $C=\{x\}$.
Observe that these conditions are satisfied if one replaces 
the order by its dual and the $\gamma_{y,x}$ by their inverses. 
This provides the counterparts of concepts and results.

Now, for a chain $C=\{x_1\prec x_2 \ldots \prec x_n   \}$
define
\[ \gamma_C= \gamma_{x_n,x_{n-1}} \circ \ldots \circ \gamma_{x_2,x_1}:L_{x_1} \to L_{x_n}\]
which is again an injective partial map, possibly empty.

\begin{claim}\lab{6.3}
\begin{itemize} 
\item[(i)] If $C, D$ are  maximal chains in $[x,z]$ then $\gamma_D=\gamma_C$.
\item[(ii)] If $C_i$ is a maximal chain in $[x_{11}x_{21},x_{in_i}]$ for $i=1,2$
then there are maximal chains $D_i$ in $[x_{n_i},x_{1n_1}+x_{2n_2}]$
such that $\gamma_{D_1} \gamma_{C_1}= \gamma_{D_2}\gamma_{C_2}$.
\end{itemize}
\end{claim}
\begin{proof}
To prove (i) we proceed
by induction  on the height of $[x,z]$.
Consider  $x \prec u ,v$, $u \in C$, $v\in D$
and the maximal chains $C'=C \setminus \{u\}$ in
$[u,z]$ and $D'=D\setminus\{v\}$ in $[v,z]$.
 If $u=v$
then
 one has 
$\gamma_C= \gamma_{C'} \gamma_{ux} =\gamma_{D'}\gamma_{ux}=\gamma_D $ 
by inductive hypothesis. Now, assume $u\neq v$ and $w=u+v$.
Choose a maximal chain $E$ in $[w,z]$. Again by induction
and by $(*)$, one has
\[\gamma_C = \gamma_{C'} \gamma_{ux} =
\gamma_E \gamma_{wu} \gamma_{ux} = \gamma_E \gamma_{wv} \gamma_{vx}
= \gamma_{D'} \gamma_{vx} = \gamma_D.\]
To prove (ii) 
let $D_j$ the image of $C_i$ under the isomorphism
$x \mapsto x+x_{jn_j}$ of   $[x_{11}x_{21},x_{in_i}]$ 
onto  $[x_{jn_j},x_{1n_1}+x_{2n_2}]$ for $\{i,j\}=\{1,2\}$.
\end{proof}

For $a_i \in L_{x_i}$ $(i=1,2)$ 
define $a_1\sim a_2$ if and only if there are maximal chains
$C_i$ in $[x_i, x_1+x_2]$  such that 
$\gamma_{C_1} a_1 =\gamma_{C_2} a_2$.
In view of the dual of (ii) in Claim~\ref{6.3}
one has $a_1\sim a_2$ if and only if 
there are maximal chains $D_i$ in $[x_1x_2, x_i]$
such that $\gamma^{-1}_{D_1}a_1= \gamma^{-1}_{D_2} a_2$.

\begin{claim} 
$\sim$ is an equivalence relation on $\bigcup_{x \in S}L_x$
which restricts to identity on each $L_x$.
\end{claim}
\begin{proof}
For $a_i \in L_{x_i}$ consider chains $C_i,D_i$ witnessing
 $a_i \sim a_{i+1}$ for $i=1,2$; that is $\gamma_{C_i}(a_i)
=\gamma_{D_i}(a_{i+1})$. By (ii) of Claim~\ref{6.3}
there are chains $E_1,E_2$ 
such that $\gamma_{E_1}\gamma_{D_1}(a_2)= \gamma_{E_2}\gamma_{C_2}(a_2)$
 whence $\gamma_{E_1} \gamma_{C_1}(a_1) =\gamma_{E_2}\gamma_{D_2}(a_3)$
and so $a_1\sim a_3$. If $x_1=x_2=x$ then $C_1=D_1=\{x\}$ 
and $\gamma_{C_1}=\gamma_{D_1}$ is identity of $L_x$ whence
$a_1=a_2$.
\end{proof}

We write $M=\left(\bigcup_{x\in S} L_x\right)$ and $L=M/\sim$
and denote the equivalence class of $a$ by $[a]$.
The following is immediate by (ii) of Claim~\ref{6.3} and its dual.
\begin{claim}
 For each $a \in M$
there are largest  $u$ resp. smallest $v$ in $S$
such that $L_u \cap [a] \neq \emptyset$ resp. $L_v \cap [a] \neq \emptyset.$
\end{claim}
We write $u=\lambda(a)$, $v=\mu(a)$ and observe that
for all $y$ with $\mu(a)\leq y \leq \lambda(a)$ 
one has unique $b =\tau_y(a)\in L_y\cap [a]$.
We also  write $\mu^*(a)$ for the unique $b \in L_{\mu(a)}$
such that $a \sim b$ and $\lambda^*(a)$ for the unique $c \in L_{\lambda(a)}$
such that $a \sim c$.

\subsection{Glueing of  posets}
Now, assume each   $L_x(x \in S)$ to be endowed with a partial order $\leq_x$,
 each with smallest element $0_x$
and greatest element $1_x$. Also, assume the $\gamma_{yx}$, $x\prec y$ in $S$,
to be 
 order isomorphisms $\gamma_{y.x}:[0_{y,x}, 1_x]_{L_x} \to [0_y, 1_{y,x}]_{L_y}$,
mapping an interval   of $L_x$ onto 
an interval of  $L_y$.
Moreover, it is required that $0_x <_x 0_{y,x}$ and
$1_{y,x}<_y 1_y$.

Again, observe that these conditions are satisfied if one replaces 
the order by its dual and the $\gamma_{y,x}$ by their inverses.

Observe that for $x\prec y \prec z$ 
the map $\gamma_{zx}=\gamma_{zy}\circ \gamma_{yx}$
is  either empty or
an order isomorphism $[0_{z,x},1_x]_{L_x}\to [0_z,1_{z,x}]_{L_z}$
where  $0_{z,x}=\gamma_{zx}^{-1}(0_z)$ and
$1_{z,x}=\gamma_{zx}(1_x)$.
Thus, for a maximal chain $C$ in $[x,y]$,
$\gamma_C$
is either empty or  an order isomorphism
$[0_{y,x}, 1_{x}]_{L_x} \to [0_y, 1_{y,x}]_{L_y}$.

For $a,b  \in M$   define $a\leq^0b$ if and only
 $a\in L_x$ and $b\in L_y$ for some  $x \leq y$ and if
 there are $x=x_0\prec x_1  \ldots \prec x_n=y$ 
and $a_i,b_i \in L_{x_i}$ $(0\leq i \leq n)$
such that $a=b_0\leq_{x_0}a_1$, $b_i=\gamma_{x_{i}x_{i-1}} a_i$ 
$(1\leq i \leq n)$, $b_i \leq_{x_{i}} a_{i+1}$ $(1\leq i <n)$,
and $a_{n+1}=b$. Here, put $a<^0b$ if $b_i <_{x_i}a_{i+1}$ for
some $0\leq i <n$.
Observe that $a \sim b$, otherwise. 
  Clearly, $\leq^0$ is transitive.

 Observe that applying the
above scheme to  $S'=[u,v]$ and $\bigcup_{x  \in S'} L_x$
one obtains the restriction of $\leq^0$ for this subset of $M$.
Thus, one may proceed by induction on the height of $S$.
In particular, with $a \leq^0 b$ witnessed as above,
one has $\gamma_{\one x_i} b_i \leq_\one \gamma_{\one x_i}a_{i+1}$. 
Now, for $a,b \in M$ define 
\[[a]\leq [b] \mbox{ if and only if there are } a'\sim a,\,
 b'\sim b \mbox{ such that } a'\leq^0 b'.\]

\begin{claim}\label{poset} \begin{itemize}
$(L,\leq )$ is a partially ordered set such that the following hold
for all $x,y \in S$ and $a,b \in M$.
\item[(i)]
For $a,b \in L_x$
 one has $a\leq_xb$ if and only if
$[a]\leq [b]$.
\item[(ii)] $x \mapsto [0_x]$ 
is an order embedding of $S$ into $L$
and 
$[0_x]\leq  [b]$ if and only if $x  \leq \lambda(b)$.
\item[(iii)] $[0_{x+y}]=\sup([0_x],[0_y])$ in $(L,\leq)$
for all $x,y \in S$.
\end{itemize} 
\end{claim}
\begin{proof} 
We claim that $a<^0 b$ implies $[a]\neq [b]$.
To prove this, we derive a 
contradiction from assuming 
 $a<^0b$ and $b\leq \gamma_{yx}a$. 
Proceeding by induction, it suffices to consider
$x=\zero$ and $y=\one$ in the definition of $<^0$. 
Thus, we have $0_{\one,\zero}\leq a=b_0\leq_\zero a_1$ and 
$\gamma_{\one,\zero} a_1 \geq \gamma_{\one,\zero} a\geq b$.
If $a=a_1$ then we apply induction for $S'=[x_1,\one]$.
If there is $j>0$ such that $b_j<_{x_j} a_{j+1}$
then $a_1<^0 b$ and we are done by induction.
Otherwise,
$b_j=a_{j+1}$ for all $j>0$, $a<_\zero a_1$, 
and $\gamma_{\one,\zero}a_1= b \leq_\one \gamma_{\one,\zero}a <_\one \gamma_{\one,\zero} a_1$ contradicting the requirement that
$\gamma_{\one,\zero}$ is an order isomorphism. 
It follows that $[a] \leq [b] \leq [a]$ 
implies $[a]=[b]$ proving that $\leq$ is a partial order on $L$.

To prove (i), consider $a<^0 b'\sim b$; one may assume $b'=\gamma_{zx}b$ with 
$z \geq x$ and, in view of induction, $x=\zero$ and $z=\one$. 
Thus, $\gamma_{z,\zero}b$ is defined for all $z$. 
Now choose $u\leq v$ in $S$ with $(u,v)$ minimal in $S^2$ 
such that there are $a'\in L_u$ and $b''\in L_v$ with
$\gamma_{u,\zero}a <_u a' \leq^0 b'' \sim b$.
By minimality of $v$ one has $u=v$ and so $a'\leq_u b''$.
 Since $\gamma^{-1}_{u,\zero} b''$ is defined, so is 
$\gamma^{-1}_{u,\zero} a'$ and it follows
$a= \gamma_{u,\zero}^{-1} \gamma_{u,\zero} a  <_\zero \gamma_{u,\zero} a'
\leq_\zero \gamma_{u,\zero}^{-1} b''=b$.  The converse is obvious.

The first claim in (ii) follows, immediately, from  
$0_x <_x 0_{y,x} \sim 0_y$ for $x \prec y$.
In $0_x \leq^0 b$  we may assume $b\in L_{\lambda(b)}$
and conclude $x \leq \lambda(b)$ from the definition of $\leq^0$.
The converse is obvious by this definition.

Ad (iii). By (ii) we have $b \geq^0 0_x, 0_y$
if and only if  $\lambda(b)\geq^0 x,y$, that is $\lambda(b) \geq x+y$,
which in turn is equivalent to $0_{\lambda(b)} \geq^0 0_{x+y}$.
Thus,  $b \geq^0 0_x, 0_y$ implies $b \geq^0 0_{x+y}$.
The converse is obvious.
\end{proof}

\subsection{Glueing of semilattices and lattices}
As an immediate  consequence  of Claims~\ref{join} and \ref{poset}
and of duality one obtains the following. 
\begin{lem}~\lab{glu} Consider a finite height modular lattice $S$
and a glueing $L$ of bounded posets $L_x$ $(x \in S)$.
\begin{itemize}
\item[(i)] If the $L_x$ are join semilattices then so is $L$.
Moreover, with the join operation $+_x$ on $L_x$
one has $\sigma(x)=[0_x]$ a join embedding $S \to L$
and joins in $L$ are computed according to Claim~\ref{join},
identifying $a\in L_x$ with $[a]$. 
\item[(iii)] If the $L_x$ are lattices then so is $L$.
Moreover, $L$ is an $S$-glued sum given by  $\sigma(x)=[0_x]$, $\pi(x)=[1_x]$
for $x\in S$. \item[(iii)] If $L$
is an $S$-glued sum of lattice $L_x$ given by $\sigma, \pi$
then $L$ is isomorphic to the glueing $L'$
of lattices $L'_x=L_x\times \{x\}$ with $0_x=(\sigma(x),x)$,
$1_x=(\pi(x),x)$, $0_{y,x}= (\sigma(y),x)$, $1_{y,x}=(\pi(x),y)$, and  
 glueing maps
$\gamma_{yx}((a,x))=(a,y)$ for $\sigma(y)\leq a \leq \gamma(x)$.  
\end{itemize}
\end{lem}

\subsection{Partial isomorphisms between glued sums.}

\begin{lem}\lab{gluesum}
Consider a lattice $L$
obtained by glueing $L_x$ $(x \in S)$
via $\gamma_{yx}$. 
 Let $U \subseteq S$ an antichain  and
assume that $L'$ is obtained by a glueing  $\phi_{yx}$ $(x\prec y$ in $S)$
 of the  lattices $L_x$ (having image $L'_x$ in $L'$) such that the following hold.
\begin{enumerate}
\item $\phi_{yx}=\gamma_{yx}$  if
$x\prec y$ in $S$ and  $\{x,y\}\cap U=\emptyset$. 
\item If $x\prec u \prec y$ and $u \in U$
then $\phi_{yu}(a)= \gamma_{yx}(\phi^{-1}_{ux}(a))$ for all 
$a \in [\phi_{ux}(0_{y,x}), 1_{u,x}]$.
\item If $x\prec u \prec y$ and $u \in U$
then  $\phi_{yu}(b)= \gamma_{yu}(b)$ for all $b\in [ 1_{u,x},1_u]$.
\end{enumerate}
Then the following hold.
\begin{itemize}
\item[(i)] There is an isomorphism $\chi:L_{|P} \to L'_{|P}$
such that $\chi \circ \gamma_{yx}= \phi_{yx} \circ \chi$
for all $(x,y) \in P$ where $P$ consists of the $(x,y)$, $x \prec  y$ in $S$ with $\{x,y\}\cap U=\emptyset$. 
\item[(ii)] 
If $T$ is an ideal of $S$ such that $U\cap T=\emptyset$ then $\chi$ restricts to an 
isomorphism of the ideal $L_T=\bigcup_{x\in T} L_x$
of $L$ onto the ideal $L'_T=\bigcup_{x \in T} L'_x$ of $L'$.
\end{itemize}
\end{lem}
According to (ii) we may identify $L_T$ with $L'_T$. Also, by (i) 
we may  use computations in $L_{|P}$ to verify
relations in $L'_{|P}$. 
\begin{proof}
Referring to (iii) of Lemma~\ref{glu}
define $\chi(a,x)=(a,x)$ for $a \in L_x$ and $x \not\in U$
to obtain a bijective map $\chi:L_{|P}\to L'_{|P}$
which restricts to an isomorphism $L_x \to L'_x$
for each $x \not\in U$ and
such that $\chi(0_{y,x}^L)= 0_{y,x}^{L'}$
and $\chi(1_{y,x}^L)= 1_{y,x}^{L'}$ 
if $(x,y) \in P$.
\end{proof}

\section{Basic models}\lab{6}

\subsection{The lattice directly associated with a group}
Adapting the construction in \cite[Section 4]{fm4},
fix a prime $p$.
Recall the abelian group $A$ and its
lattice  $L(A)$ from Subsection~\ref{skew}.
In particular, $A$ is a $\mathbb{Z}_{p^2}$-module.
The proper ``skeleton'' will be the ideal $S=L(pA)$ of $L(A)$.
.
\begin{fact}
$L(A)$ is an $S$-glued sum with embedding of  $S$ into $L(A)$
given by $\sigma(X)=X$ 
and $\pi(X)=\{ a \in A\mid pa \in X\}$.
The interval sublattices $[\sigma(X),\pi(X)]$ of $L(A)$
are isomorphic to lattices $L(\mathbb{Z}_p^4)$ and 
 $L(A)$ is a
finite  simple
modular lattice.
\end{fact}
\begin{proof}
$pA$ is a  free $\mathbb{Z}$-module  with generators
$f_i=pe_i$ and relations $pf_i=0$, $i=1,2,3$-  
Now, with $V=pA+\mathbb{Z}e_4$ one obtains an isomorphism
$f_i \mapsto e_i+V \in A/V$ 
and a lattice  isomorphism $\pi:S \to [V,A] \subseteq L(A)$
such that $\sigma(X) \leq pA \leq \pi(Y)$ for all $X,Y \in S$. 
It follows that $L=\bigcup_{x \in S}[\sigma (X), \pi(X]$ 
is a sublattice of $L(A)$.
Now, for any $C\in L(A)$  one has $X:=pC \in S $
and $C \subseteq \pi(X)$ whence $L=L(A)$.
Finally, $L(A)$ is simple in view of Lemma~\ref{simpglu}.
\end{proof}

Given a group $G$, let $Q$ denote the group ring  $\mathbb{Z}_{p^2}(G)$ 
with coefficients the integers modulo $p^2$.
The  free $Q$-module  $B$
with generators $e_1,e_2,e_3,e_4$ 
and relation $pe_4=0$ has a subgroup $A$ generated by $e_1,e_2,e_3,e_4$

\begin{fact}
  $L(A)$  embeds into the
$Q$-submodule lattice $L(B)$ of $B$ via $X \mapsto \varepsilon(X)=QX$;
moreover,  $\sigma'(X)= QX$ 
and $\pi'(X)= Q\pi(X)$ are lattice embeddings
of $S$ into $L(B)$ 
establishing an $S$-glued sum $L(G)$ within 
$L(B)$. Finally,
with the ideal $T=[0,\mathbb{Z}e_1+\mathbb{Z}e_3]$ of $S$,
  $L_T(G)=\bigcup_{X \in T}[QX,Q\pi(X)]$ is an ideal of $L(G)$.
\end{fact}
Observe that
$B$ and  $L(B)$ are finite
if so is $G$ and that  $L(G)$ is a ``rather small'' sublattice of $L(B)$. 

 \begin{proof}
Since $Q$ is a free $\mathbb{Z}_{p^2}$-module (with basis $G$),
$Q$ is a flat $\mathbb{Z}_{p^2}$-module  and
the map $X \mapsto QX =Q \otimes_{\mathbb{Z}_{p^2}} X$ 
is a lattice homomorphism $L(A) \to L(B)$
and injective since $L(A)$ is simple.
Thus, $\sigma'$ and $\pi'$ are lattice embeddings, too,
and $L(G)= \bigcup_{X \in S} [\sigma'(X), \pi'(X)]$
a sublattice of $L(B)$ which is an $S$-glued sum.
\end{proof}

\subsection{Capturing  a group generator  by a   stable term}

In $L_T(G)$ 
there is   a (canonical) skew $(3,2)$-frame $\Psi^0=(\Phi'^0,\Phi^0)$ given by the submodules
\[\Phi'^0:\;Qpe_1, Qpe_3, Qe_4, Q(pe_1-pe_3), Q(pe_1-e_4), Q(pe_3-e_4)\]
\[\Phi^0:\; Qe_1, Qe_3, Q(e_1-e_3). \]
Choose  $L_0(G)$ as the ideal  $[0, \top^{\Psi^0_{(3,2)}}]$
of $L_T(G)$.
The group $G$ embeds into
the group  of units of  the coordinate ring $R(\Phi'^0, L_0(G))$ via
\[ g \mapsto Q(pe_1 -gpe_3).    \] 
Recall the definitions of the derived skew frames 
$\Psi_{(3,2)}$ in Subsection~\ref{skew}
and the terms $g^*_1(\bar z',\bar z)$ and $g^+({\bar z}',\bar z)$
from  Lemma~\ref{stabpp}.

\begin{lem}\lab{basic}
For each  group $G$ and $g \in G$  
there is a modular lattice
$L(G,g)$  (finite if $G$ is finite)
with spanning  skew $(4,3)$-frame $\Psi=(\Phi',\Phi)$ 
of characteristic $p\times p$ 
and an isomorphism $\omega$ from $L_0(G)$ onto
the interval  
 $L_0(G,g)=[0, \top^{\Psi_{(3,2)}}]$ of $L(G,g)$
matching the skew $(3,2)$-frame $\Psi^0$ of $L_0(G)$ 
with $\Psi_{(3,2)}$,
inducing an isomorphism 
of coordinate rings
 $R(\Phi'^0,L_0(G))\to R(\Phi',L_0(G, g))$, and
such that  
\[(*)\quad\omega(Q(pe_1-gpe_3)=
g^+(\Psi)\]
which is a stable element w.r.t. the $3$-frame $\Phi'_{\neq 2}$.
\end{lem}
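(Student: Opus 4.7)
The plan is to construct $L(G,g)$ as a twisted modification of the lattice $L$ from Subsection~\ref{Dil}/6.3, using the glued sum construction of Observation~\ref{gluesum}. In $L$ itself, the $Q$-submodules $Qe_i$, $Q(e_i-e_j)$, $Qpe_i$, $Q(pe_i-pe_j)$, $Qe_4$, $Q(pe_i-e_4)$ already form a canonical spanning skew $(4,3)$-frame $\Psi^{\mathrm{can}}$ of characteristic $p\times p$ whose $(3,2)$-part is $\Psi^0$. But in $L$ the element $g^+(\Psi^{\mathrm{can}})$ would correspond, via the coordinate ring, to the \emph{identity} of $R(\Phi'^0, L_0(G))$, not to $g$; so a twist is required.

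To install the twist I would apply Observation~\ref{gluesum} to the $S$-glued-sum decomposition $L=\bigcup_{X\in S}[QX,QX^*]$, keeping $T=[0,\mathbb{Z}pe_1+\mathbb{Z}pe_3]$ fixed and letting $U$ be the set of atoms of $S$ not lying in $T$ (those involving a $pe_2$-component). For each $u\in U$ I would choose $\phi_{u0}$ to be the lattice automorphism of $[u,0^*]_L$ induced by the $Q$-module automorphism of the corresponding subquotient of $B$ that multiplies the relevant diagonal lines by $g$ — schematically, $Q(pe_i-pe_j)$ is replaced by $Q(pe_i-gpe_j)$ in the images indexed by a direction involving $2$. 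The isomorphisms $\phi_{yu}$ on $[\phi_{u0}(y),u^*]\to[y,u^*]$ are then uniquely determined by the identification with $\phi_{u0}^{-1}$ on the overlap and by being identity at the top, as required. The resulting lattice is $L(G,g)$; because $T$ is untouched, the associated isomorphism $\psi$ is identity on $L_T(G)\supseteq L_0(G)$, so taking $\omega$ to be this identity embedding yields $\omega(\Psi^0)=\Psi_{(3,2)}$ and an induced isomorphism of coordinate rings automatically.

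Next, in $L(G,g)$ I would define the full skew $(4,3)$-frame $\Psi=(\Phi',\Phi)$ by adjoining to $\omega(\Psi^0)$ the images under $\psi$ of the remaining generators $Qe_2$, $Q(e_1-e_2)$, $Q(e_3-e_2)$, $Qpe_2$, $Q(pe_1-pe_2)$, $Q(pe_3-pe_2)$, $Q(pe_2-e_4)$. Spanning follows from the corresponding property in $L$ together with the glueing identities. Characteristic $p\times p$ is inherited from the defining relations $p^2 e_i=0$ ($i\leq 3$) and $pe_4=0$ of $B$, which are preserved when passing through the $\psi_x$; concretely, $p\otimes_{\bar a'}c'_{14}=a'_1$ and the inequalities on $p_{32}(\bar a)$ are verified exactly as in \cite[Section~4]{fm4}, since the twist, being induced by a $Q$-module automorphism, preserves all $p$-divisibility relations.

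The main obstacle — and the point where the lemma acquires its content — is the final equation $\omega(Q(pe_1-gpe_3))=g^+(\Psi)$. For this I would appeal to \cite[Cor.~13]{fm4}: that corollary constructs inside a skew $(4,3)$-frame of characteristic $p\times p$ built from a twisted glueing of exactly the type above a $2$-stable element $g_1^*$ whose image in the coordinate ring is precisely the group element encoded by the twist $\phi_{u0}$. Applying the perspectivity $\pi_{23}$ (as explained before Lemma~\ref{stabpp}) turns $g_1^*$ into a $3$-stable element of $\Phi'$, which is the value of the term $g^+$. Since the twists $\phi_{u0}$ were chosen to encode $g$, the identity in the coordinate ring reads $g^+(\Psi)=Q(pe_1-gpe_3)$ after transport through $\omega$, completing the verification. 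The bulk of the work is therefore a translation: confirming that the concrete data $(T,U,\phi_{u0},\phi_{yu})$ set up here match the hypotheses under which \cite[Section~4]{fm4} and its Corollary~13 are proved.
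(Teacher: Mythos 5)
Your proposal follows essentially the same route as the paper: both construct $L(G,g)$ by the twisted glued-sum scheme of Observation~\ref{gluesum} following \cite[Section 4]{fm4}, take $\omega$ to be the identity on the untouched ideal $L_T(G)$, and obtain $(*)$ from the stable element of \cite[Cor.~13]{fm4} via the perspectivity $\pi_{23}$. The only cosmetic difference is that the paper separately cites \cite[Lemma 18]{fm4} for the explicit identification $Q(pe_1-gpe_2)=g^*(\Psi)$ before applying $\pi_{23}$, whereas you fold that identification into Corollary~13.
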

\begin{proof}
Leaving $(*)$ aside,
the lattice $L(G,g)$ and the isomorphism $\omega$ 
have been constructed in \cite[Section 4]{fm4}
relying on the method established in  Lemma~\ref{gluesum}.
Modularity follows from Lemma~\ref{modglu}.

In particular, we may assume $L_T(G)$ an ideal of $L(G,g)$
and $\omega$ identity. Moreover,
according to \cite[Lemma 18]{fm4}
one has  \[Q(pe_1- gpe_2)= g_1^*(\Psi) \in L_T(G)\]
with $g_1^*(\Psi)$ stable for $\Phi'$ according to \cite[Cor.13]{fm4}
and $(*)$ follows applying the perspectivity $\pi_{23}$
cf. Lemma~\ref{stabpp}.

Observe that in \cite{fm4} $Z_p$ has been used to denote both
the ring $\mathbb{Z}/p\mathbb{Z}$ and the ideal $pZ_{p^2}$
of $Z_{p^2}=\mathbb{Z}/p^2\mathbb{Z}$. Similarly, $R$ 
denoted both the ring $Q/pQ$ and the ideal $pQ$ of $Q$. 
Referring to the ideal, 
 given an element $a=\sum_{i=1}^3 r_i e_i \in A$ 
 one has the subgroup $Z_pa= \mathbb{Z}\sum_{i=1}^3 r_ipe_i$ of $pA$
and given $b=\sum_{i=1}^3 r_i e_i \in B$ 
one has the $Q$-submodule $Rb= Q\sum_{i=1}^3 r_ipe_i$
of $pB$. In each case, the proper meaning is obvious from  the context.
\end{proof}

\subsection{Basic model}

\begin{thm}\lab{const}
For each  group $G$ with generators $\bar g=(g_1, \ldots, g_n)$  
in $G$
there is a modular lattice
$L(G,\bar g)$  such that the following hold
\begin{enumerate}
\item $L(G,\bar g)$ contains an  $n$-tower $\Omega(n)_{can}$
(to be referred to as  \emph{canonical}) of skew $(4,3)$-frames  $\Psi^i=(\Phi'^i,\Phi^i)$  
of characteristic $p\times p$,  $i=1,\ldots, n$ 
\item There is an embedding $\gamma:G \to
R(\Phi'^n,L(G,\bar g))$ such that, for all $i$,
$g^+(\Psi^i)+\bot^{\Psi^n}
= \gamma(g_i)$
and $\gamma(g_i)\cdot  (a_1^{\Phi'^i}+a_3^{\Phi'^i})  =g^+(\Psi^i)$.
\item $L(G,\bar g)$ is finite if $G$ is finite.
\end{enumerate}
\end{thm}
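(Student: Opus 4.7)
The plan is to iterate the single‑generator construction of Lemma~\ref{basic} along the $n$ floors of the canonical tower, performing one glueing per group generator. Concretely, I would proceed in three stages.

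First, I would fix the \emph{canonical tower}. Take the $\mathbb{Z}$-module $A$ with generators $e_1,e_2,e_3,e_4$ and relations $p^2e_1=0$, $p^{3n-1}e_2=0$, $p^2e_3=0$, $pe_4=0$ used right after Observation~\ref{obs4}. That subsection already displays an $n$-tower $\Omega(n)_{can}$ of skew $(4,3)$-frames $\Psi^k=(\Phi'^k,\Phi^k)$ of characteristic $p\times p$ in $L(A)$, with the $k$-th floor built from $p^{3(n-k)}e_2$, $p^{3(n-k)+1}e_2$, $p^{3(n-k)+2}e_2$ and the $e_1,e_3,e_4$ (resp.\ $pe_1,pe_3$) components. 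I then enlarge the coefficient ring to $Q=\mathbb{Z}_{p^2}(G)$, consider the free $Q$-module $B$ with the same generator/relation data as $A$, and embed $L(A)$ into $L(B)$ by $X\mapsto QX$. The images of the tower elements form $\Omega(n)_{can}$ inside $L(B)$, and the sublattice $L$ from Subsection~6.3, formed as the union of the intervals $[QX,QX^*]$ for $X$ in $S=[0,\sum_i\mathbb{Z}pe_i]$, is a glued sum over $S$ in the sense of Observation~\ref{gluesum}.

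Second, I would perform the Dilworth--Hall glueing of Lemma~\ref{basic} once for each $k$, at the $k$-th floor. For fixed $k$, the scaling $e_i\mapsto p^{3(n-k)}e_i$ identifies the setup at the $k$-th floor with the single-frame setup of Subsection~6.3: the ideal $L_0(G)$ (with its canonical skew $(3,2)$-frame $\Psi^0$) sits inside the interval $[0,\top^{\Psi^k_{(3,2)}}]$ of $L(B)$, and Lemma~\ref{basic} supplies a modular lattice $L(G,g_k)$, containing $L_0(G)$ as an ideal and extending $\Psi^k_{(3,2)}$ to a skew $(4,3)$-frame of characteristic $p\times p$, in which the ring element $Q(p^{3(n-k)+1}e_1-g_kp^{3(n-k)+1}e_3)$ becomes $g^+(\Psi^k)$. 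Because each such glueing is an application of Observation~\ref{gluesum} carried out only within the subinterval of the $S$-glued sum corresponding to floor $k$ (the set $U$ of floor-$k$ atoms is disjoint from the floor-$k'$ data for $k'\neq k$), these $n$ glueings can be carried out in turn without interfering with each other or with the perspectivities $\Psi^k_{(3,2)}\nearrow(\Psi^k)^{(3,2)}\nearrow\Psi^l_{(3,2)}\nearrow(\Psi^l)^{(3,2)}$ of the tower. The resulting lattice is $L(G,\bar g)$; it still contains $\Omega(n)_{can}$, and each $\Psi^k$ remains of characteristic $p\times p$.

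Finally, to obtain~(2) I would use that the perspectivities of the tower identify the coordinate rings $R(\Phi'^k,\cdot)$ for different $k$ via the maps $r\mapsto r+\bot^{\Psi^n}$, restricted back by $x\mapsto x\cdot(a_1^{\Phi'^k}+a_3^{\Phi'^k})$; composing the embedding $G\hookrightarrow R(\Phi'^k,L_0(G,g_k))$ of Lemma~\ref{basic} (for any fixed $k$, say $k=n$) with these identifications gives the required $\gamma$, and the identity $\gamma(g_k)\cdot(a_1^{\Phi'^k}+a_3^{\Phi'^k})=g^+(\Psi^k)$ is exactly relation $(*)$ of Lemma~\ref{basic} transported along the tower. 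Finiteness of $L(G,\bar g)$ when $G$ is finite is immediate: $Q$ is then finite, $B$ is a finite $Q$-module, $L(B)$ is finite, and each glueing of Observation~\ref{gluesum} only adds finitely many new elements (replacing a finite interval by an isomorphic copy). The main obstacle is the second stage: verifying that the $n$ floor-wise glueings genuinely commute, so that after performing all of them the element $g^+(\Psi^k)$ produced at floor $k$ is still $3$-stable for $\Phi'^k$ (so Lemma~\ref{stabpp} applies) and its image under the tower perspectivities is exactly the ring element representing $g_k$ in $R(\Phi'^n,L(G,\bar g))$—this amounts to checking that the local glueing at floor $k$ acts as the identity on the intervals used by the glueings at the other floors.
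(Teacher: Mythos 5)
Your stage two is where the proof actually lives, and you have left it as an acknowledged ``main obstacle'' rather than resolving it; as written this is a genuine gap. The difficulty is not merely bookkeeping: Observation~\ref{gluesum} is formulated for an $S$-glued sum where $S$ has height $3$ (a single floor, $S=L(pA)$ with $p^2e_i=0$), so it does not directly license a ``local'' twisting at floor $k$ inside the $n$-floor lattice $L(B)$ built from the module with $p^{3n-1}e_2=0$. To make your plan work you would have to either generalize the glued-sum modification to the taller skeleton, or first decompose $L(B)$ into a chain of $n$ one-floor pieces and modify each --- and the latter is essentially what the paper does, but more cleanly: it never works inside one big ambient lattice at all. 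Instead it takes $n$ pairwise disjoint copies $L(G,g_i)$ of the single-generator lattice from Lemma~\ref{basic}, each already carrying its twisted skew $(4,3)$-frame $\Psi^i$ of characteristic $p\times p$, and glues them in a chain by the elementary Dilworth--Hall construction of Subsection~\ref{Dil}, identifying $[a_2^{\Psi^i},\top^{\Psi^i}]_{L(G,g_i)}$ with $[\bot^{\Psi^{i+1}},\top^{\Psi^{i+1}_{(3,2)}}]_{L(G,g_{i+1})}$ via $\alpha_i=\omega_{i+1}\circ\omega_i^{-1}(\,\cdot\,\top^{\Psi^i_{(3,2)}})$. Since the overlap intervals are untwisted copies coming from $L_0(G)$, the non-interference you worry about is automatic, the tower relations follow from transitivity of $\nearrow$, and the identity $\omega_j(x)=\omega_i(x)+\bot^{\Psi^j}$ gives $\gamma=\omega_n\circ\eta$ with $\gamma(g_i)=g^+(\Psi^i)+\bot^{\Psi^n}$ directly.

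Your first and third stages are consonant with the paper (the canonical tower in $L(A)\hookrightarrow L(B)$ is exactly the model displayed after Observation~\ref{obs4}, and your treatment of $\gamma$ via the perspectivities $r\mapsto r+\bot^{\Psi^n}$ and the retraction $x\mapsto x\cdot(a_1^{\Phi'^i}+a_3^{\Phi'^i})$ matches $(*)$ of Lemma~\ref{basic}), and the finiteness argument is fine. But until you either prove the commutation of the $n$ floor-wise glueings or reorganize the construction as a chain glueing of the $L(G,g_i)$, the proof is incomplete precisely at its central step.
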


\begin{proof}
Given $i$, consider  $L(G,g_i)$ from Lemma~\ref{basic}
with skew $(4,3)$-frame $\Psi^i=(\Phi'^i,\Phi^i)$ and 
isomorphism $\omega_i:L_0(G)\to L_0(G,g_i)$.
We may assume that the $L(G,g_i)$ are pairwise disjoint lattices.
Now, 
\[ \alpha_i(x)=  \omega_{i+1}(\omega_i^{-1}(x \cdot \top^{\Psi^i_{(3,2)}})) 
\in L_0(G,g_i) \]
defines an isomorphism  
\[\alpha_{i}:[a^{\Psi^i}_2, \top^{\Psi^i}]_{L(G,g_i)}
\to  [\bot^{\Psi^{i+1}}, \top^{\Psi^{i+1}_{(3,2)}}]_{L_{i+1}(G,g_{i+1})}.\] 
Let ${L}(G,\bar g)$  arise by 
Dilworth-Hall glueing 
(as described in Subsection~\ref{Dil}) the $L(G,g_i)$ 
via the isomorphisms $\alpha_{i}$. 
One has, due to the glueing via the $\alpha_i$, 
\[ (**)\; \omega_i(x)+ \bot^{\Psi^{j}}=\omega_j(x)
 \mbox{ for }
x \in L_0(G) 
\] 
for $j=i+1$. The case $i\leq j<n$
as well as the relations required for an $n$-tower follow by induction 
and transitivity of $\nearrow $ - recall $\Psi^i_{(3,2)} \nearrow
\Psi^{i (3,2)}$.
With the canonical  embedding 
\[\eta:G\to R(\Phi'^0,L(G,\bar g)) \mbox{ where } \eta(x)=
Q(pe_1-xpe_3)\]
equation
$(*)$ of Lemma~\ref{basic} together with $(**)$ for $j=n$ yield
\[\omega_n(\eta(g_i)) = \omega_i(\eta(g_i))+ \bot^{\Psi^n}=
   g^+(\Psi^i)+ \bot^{\Psi^n}.  \]
Since $G$ is generated by $\bar g$,
$\omega_n \circ \eta$ 
restricts to an embedding $\gamma:G \to R(\Phi'^n,L(G,\bar g))$
as required in (2).
\end{proof}

\section{Unsolvability}\lab{7}

In order to prove Theorem~\ref{thm}  applying Slobodkoi's Theorem~\ref{slo},
we show that there is an algorithm reducing the
Uniform Word Problem for the class $\mc{G}_0$ of all finite groups 
to the decision problem for the equational theory
of the class  $\mc{M}_0$ of finite modular lattices. 
 Moreover, we observe that
this reduction produces identities in $n+6$ lattice variables,
if applied to group presentations in  $n$ generators. 
To prove the reduction,  we verify the hypotheses of
 Lemma~\ref{reduction} which are given just before that lemma.

\begin{proof}
Consider 
a finite group presentation given by words $w_j(\bar g)$, $1\leq j \leq h$
in a list $\bar g=(g_1, \ldots, g_n)$
of generator symbols and  relations $w_j(\bar g)=e$, $j=1, \ldots, h$.
We construct a series of $n$-towers $\Omega_m$
of skew $(4,3)$-frames \[\Omega_m= (\Psi^k_m\mid_{ k=1, \ldots, n})
=(\Phi'^k_m,\Phi^k_m\mid_{ k=1, \ldots, n})\]\[
=(a'^k_{mi},c'^k_{mij};a^k_{mi},c^k_{mij} \mid_{k=1,\ldots ,n}), 0\leq m \leq \mu=n+h.\]
The list of generators of $\Omega_0$ 
is also denoted by $\bar a$, that of $\Omega_m$ by $\bar a_m$.
Let $F=F_0$ denote the modular lattice freely generated by the
$n$-tower $\Omega=\Omega_0$. The construction will be such that
$\Omega_m$ generates a sublattice $F_m$ of $F_0$ so that  $F_{m+1} \subseteq F_m$ for all $m<\mu$. 
Moreover, the following will  hold.
\begin{itemize}
\item[(A)] For the $4$-frame $\Phi'^n_\mu= (a'^n_{\mu, i}, c'^n_{\mu,ij})$ in $F_\mu$   
one has a list of elements $\bar s_\mu=(s_{\mu 1}, \ldots ,s_{\mu n})$ in
the group $R^ \#(\Phi'_\mu,F)$
such that $w_j(\bar s_\mu)=c'^n_{\mu,13}$ for $1\leq j\leq h$. 
\item[(B)] 
For any group $G$ and $\bar g$ in $G$
 with $w_j(\bar g)=e$ 
for $1\leq j \leq h$
 one has
 $\phi(\bar a)=\phi(\bar a_\mu)$ 
and $\phi(s_{\mu i})= \gamma(g_i)$ 
for $i=1, \ldots, n$ 
where $\phi:F \to L(G,\bar g)$ is
the homomorphism 
mapping $\bar a$ onto the canonical $n$-tower $\Omega_{can}$ of
the lattice 
$L(G,\bar g)$ constructed in Thm.~\ref{const}.
 \end{itemize}
Observe that $\phi$ in (B) exists by  Thms.~\ref{4tower} and  \ref{const} (1).
This construction will be uniform for 
all group presentations, to be implemented by an algorithm as
required in Lemma~\ref{reduction}.

In the context of this lemma, we consider quasi-identities
$\beta$ in the language of groups with 
 antecedent  $\alpha$ the conjunction 
 of identities $w_j(\bar y)=e$, $j=1,\ldots ,h$,
where $\bar y=(y_1,\ldots ,y_n)$. The presentation 
required in (a) of  Lemma~\ref{reduction} is that of an $n$-tower $\Omega$  of skew frames
- with generator symbols $\bar a$. 
Recall from Thm.~\ref{4tower} that $\Omega$ 
can be defined in terms of $n+6$ generators.

The terms $u_i(\bar x)$ are chosen such that
$\bar u(\bar a)$ is the $n$-tower generating $F_\mu$ within $F$.
Hypothesis $(i)$ is satisfied  due to Cor~\ref{4pro}.
Concerning hypothesis $(ii)$,
consider a homomorphism $\phi:F \to L\in \mc{M}$ and observe  that
 $ \phi(\bar u(\bar a))=\phi(\bar a_\mu) =\phi(\bar a)$ by (B).

The translation required in (b) 
is given by  the constant $c'^n_{13}$
defining the multiplicative identity  and the terms  defining  multiplication
and inversion in the group $R^\#(\Phi'^n,F)$
related to the $4$-frame $\Phi'^n=(a'^n_i,c'^n_{ij})$
which is part of the  $n$-tower $\bar a$.
According to Subsections 5.1 and 5.2 this translation 
satisfies hypothesis (iii) within $\mc{M}$. 
Also by this,   the algebra $G$  in (iv) is indeed a group, finite
if $L$ is finite. Moreover, the generators $\bar u|_n(\phi(\bar a))$ 
satisfy   $\alpha$ by $(A)$. 

Finally, hypothesis (v) is granted by    Theorem~\ref{const} and (B). 

\textbf{Outline of  construction:} 
To obtain $\bar a_\mu$ we put $\bar a_0=\bar a$ and construct, iteratively,
$n$-towers $\Omega_m=\bar a_m$, $m\leq \mu$.\\
\textbf{Case 1}: $m\leq n$ 
\begin{enumerate}
\item The $n$-tower $\Omega_m=\bar a_{m}$ is obtained from 
the $n$-tower $\Omega_{m-1} =\bar a_{m-1}$
by lower reduction, induced by a reduction of $\Psi^{m-1}_{m-1}$
to $\Psi^m_m$,
within  the sublattice  $F_{m-1}$ of $F$ generated by
 $\bar a_{m-1}$,
\item  
One has $m$  elements $s_{m1}, \ldots ,s_{mm}$
$3$-stable in $F$ for the $4$-frame $\Phi'^n_m$ in $\Omega_m$.
\item  $s_{mi}$ $(i \leq m-1)$ is obtained as in  Fact~\ref{stab}
 by the
lower reduction in (1)    from $s_{m-1,i}$ 
stable in $F$ for $\Phi'^n_{m-1}$ while 
 $s_{mm} =s+\bot^{\Psi_m^n}$ where $s=g^+(\Psi_m^m)$ is 
 $3$-stable in $F$ for the $4$-frame   $\Phi_m'^m$.
\item The reduction in (3) is chosen  such that
the skew frame $\Psi^m_{m-1}$
is reduced
as in Lemma~\ref{charp} to the skew frame $\Psi^m_m$ having characteristic $p\times p$.
\end{enumerate}
\textbf{Case 2}: $n<m \leq \mu=n+h$.
\begin{enumerate}\setcounter{enumi}{4}
\item
 $\bar a_{m}$ is obtained from $\bar a_{m-1}$
by  upper reduction 
within  the sublattice  $F_{m-1}$ of $F$ generated by
 $\bar a_{m-1}$.
\item  
One has a list $\bar s_m$ of $n$  elements 
stable in $F$ for the $4$-frame $\Phi'^n_m$
and 
satisfying $w_j(\bar s_{m})= c'^n_{m,13}$ for $j \leq m-n$,
within the group  $R^\#(\Phi'^n_m,F)$.
\item These are obtained from $\bar s_{m-1}$ by the upper reduction  in (5).
\end{enumerate}

Proof of (A) and  (B). 
We show, by induction, that for all $m \leq \mu$
 \begin{itemize}
\item  $\phi(\bar a_m)= \phi(\bar a)$, that is $\phi(\Omega_m)=\phi(\Omega)$.
\item $\bar s_{m}$ 
is a list of stable elements for $\Phi'^n_m$ 
\item $\phi(s_{mi})=\gamma(g_i)$ for $i\leq \min(m,n)$.
\item $w_j(\bar s_{m})= c'^n_{m,13}$ where $m\geq n$ and $j\leq h= m-n$. 
\end{itemize}
 The case $m=0$ is just the definition
of $\phi$.  
For  $m\leq n$, we apply  Lemma~\ref{charp}
to $\Psi_{m-1}^m$, that is with ${\bf b}=b^*(\bar a'^m_{m-1},\bar a^m_{m-1})$ and
${\bf d}=d^*(\bar a'^m_{m-1},\bar a^m_{m-1})$. 
By inductive hypothesis one has $\phi(\Psi^m_{m-1})=\phi(\Psi^m)$
which
is of characteristic $p\times p$
as part of the canonical 
 $n$-tower of $L(G,\bar g)$, whence  $\phi({\bf b})= \phi(\bot^{\Psi^m_m})$
and $\phi({\bf d})= \phi(a_1'^{\Psi^m_m})$  in view of (4). It follows
 \[\phi(\Psi^m_m) =\phi((\Psi^m_{m-1})_{{\bf b},{\bf d}})=
(\phi(\Psi^m_{m-1}))_{\phi({\bf b}),\phi({\bf d})}=  
\phi(\Psi^m_{m-1})=\phi(\Psi^m).\]
This in turn implies $\Phi(\Omega_m)= \Phi(\Omega_{m-1})$
in view of Observation~\ref{towred}.      
The  element $s$ in (3) being  chosen 
as $s=g^+(\Psi^m_m)$ according to Lemma~\ref{stabpp}
we have $s$ $3$-stable for $\Phi'^m_m$ and  $s_{mm}$ $3$-stable for $\Phi'^n_m$ due to the isomorphism induced by
$(\Psi^m_m)_{3,2} \nearrow (\Psi^n_m)_{3,2}$ 
which matches $\Phi'^m_m$ with $\Phi'^n_m$. 
Moreover, 
 $\phi(s_{mm})=\gamma(g_m)$ by (2) of Thm.~\ref{const}.
For $i<m$ the other hand, according to (3) 
 $s_{mi}$  is obtained from $s_{m-1,i}$ as in Fact~\ref{stab}
applying the reduction with ${\bf b}_1+a^n_\bot$ 
to the $4$-frame $\Phi'^n_{m-1}$. In particular, $s_{mi}$
is stable for $\Phi'^n_m$.
Since $ \phi(\Omega_m ) =\phi(\Omega_{m-1})$ 
  it follows that 
 $\phi(s_{mi})=\phi(s_{m-1,i})=\gamma(g_i)$.
   
For $m=n+j$ we proceed with the same kind of reasoning,
now referring to Lemma~\ref{stabgp}, 
to add  $w_j(\bar s_m)=c'^n_{m,13}$, while
stable $\bar s_{m-1}$ leads to  
 stable $\bar s_m$, 
and   $\phi(s_{m-1,i})=\gamma(g_i)$ 
to $\phi(s_{m,i})=\gamma(g_i)$ and
$w_i(\bar s_{m-1})=c'^n_{m-1,13}$ to  $w_i(\bar s_m)=c'^n_{m,13}$
for all $i<m-n$.

\end{proof}

\section{Remarks}\lab{5.4}
Reducing (Restricted)  Word Problems for groups
to such for modular lattices follows the same scheme
in the finite and in the infinite case.
Recall from Subsection~\ref{coor}  
that with any $4$ frame in a modular lattice
one has the associated von Neumann coordinate  ring $R(\Phi)$
with subgroup $R^*(\Phi)$ of units,
all defined in terms of the frame.

Now, with a group presentation $(\Pi,\bar g)$ 
associate the lattice presentation $\lambda(\Pi,\bar g)$
obtained as follows:  To the  $4$-frame $\Phi$
  add a  generator symbol ${g}_i$
for each $g_i$; also, to the relations add
the relations  $a_1{g}_i=a_\bot$, $a_1+ {g}_i= a_1+a_2$,
and   $w_i({\bar g})=c_{13}$. Here. $w_i(\bar g)=e$
is a relation of $(\Pi,\bar g)$.

By  this construction,  if $w(\bar g)=e$ 
is a consequence of $(\Pi,\bar g)$ 
for (finite) groups, then $w({\bar g}) =c_{13}$
is a consequence of $\lambda(\Pi,\bar g)$
for (finite)  modular lattices $L$, since
 $R^*(\Phi,L)$ is a (finite) group for any $L$.

On the other hand, if $(G,\bar h)$ is a model of 
$(\Pi,\bar g)$ such that $w(\bar h) \neq e$  (and $G$ finite),
then a (finite) model $(L,\Phi,{\bar h})$
with  $w({\bar h}) \neq c_{13}$ is obtained
choosing a (finite)
  vector space $_FV$ 
of  $\dim_FV= 4|G|$; then the lattice
 $L$  of $R$-submodules of $R^4$, $R$ the group ring $F[G]$,
  embeds into the lattice $L(_FV)$  of subspaces; moreover,
the  canonical $4$-frame $\Phi$ of $L$ together
with the $R(e_1- {h}_ie_3) \in R^*(\Phi)$ provide the required model
 of $\lambda(\Pi,\bar g)$ such that $w({\bar g})\neq c_{13}$.
In particular,
the model embeds into the subspace lattice $L(_FV)$.
Consequently, the relevant class of models
consists of sublattices of $L(_FV)$ where $\dim_FV$ 
is infinite respectively of $L(_{F_d}V_d)$ where $\dim_{F_d}V_d \to \infty$.
In particular, from Thm.~\ref{slo} one has the following.
\begin{cor}
Let $\mc{C}$ 
be any class of finite modular lattices 
such that for all $d \in \mathbb{N}$ 
there are lattices of subspaces $L(_{F_d}V_d)$ in $\mc{C}$ 
with $\dim_{F_d}V_d \geq d$. Then the Restricted Word Problem
for $\mc{C}$ is unsolvable. 
\end{cor}

With a simple modification one can restrict the number of lattice
generators  to $5$: For any $n\geq 3$
$n$-frames have an equivalent presentation   in modular
lattice theory  to a presentation with  $4$-generators \cite[Satz 4.1]{quad}
and with $g=\sum_{j=1}^n \pi_{3,j+3}(g_j)$
one obtains $g_j= \pi_{j+3,3}(g\cdot (a_1+a_{j+3}))$ 
for $j=1, \ldots ,n$ to replace $\bar g$ equivalently by $g$
and to proceed with the $4$-frame given by $a_i,c_{ij}$
where $i,j\leq 4$.

For reduction  to identities, the scheme is modified 
as described in Subsection~\ref{2.6}. 
In order 
to associate with group generators lattice terms 
which allows one to force group  relations within the lattice,
several frames are combined via some kind of glueing.
This leads to models which 
are  non-Arguesian lattices and, in particular,
do not embed into lattices of normal subgroups.

In all examples, discussed, one has a certain set
$\Sigma$ of quasi-identities in the language of groups
and for each $\beta \in \Sigma$
an associated quasi-identity $\lambda(\beta)$ in the language
of lattices  and a class $\mc{S}$ of (finite) modular lattices,
the class of ``models'', such that the following hold.
\begin{itemize}
\item If $\beta$ holds for all (finite) groups then
$\lambda(\beta)$ holds for all (finite) modular lattices. 
\item If $\beta$ fails for some (finite) group then
$\lambda(\beta)$ fails for some ``model''  lattice  in $\mc{S}$. 
\end{itemize} 
Thus, if $\Sigma$ is undecidable for the class of (finite)
groups, then the set of $\lambda(\beta)$ valid in
all (finite) modular lattices and the set of
$\lambda(\beta)$ failing in some lattice in $\mc{S}$
are recursively inseparable. In other words,
the undecidability results extend to all
classes of (finite) modular lattices
containing the relevant class of models.

Observe that the number of generators in Slobodkoi's 
Theorem is $3m+61$ where $m$ is the minimum number of states
of a two tape Minsky machine computing some 
partial recursive function with non-recursive domain.

\begin{prob}
What is the minimal $N$ such that 
the $N$-variable equational theory of 
finite modular lattices is undecidable.
\end{prob}
Since skew $(n,m)$-frames can be generated by
$8$ elements, the following could be of use.
\begin{prob}
Can one find $n-m$ stable elements  in the modular
lattice freely generated by a skew $(n,m)$-frame
of characteristic $p\times p$?
\end{prob}

\section{Funding and Declarations}
\subsection{Funding} None
\subsection{Declarations} Not applicable
\subsection{Acknowledgements} The author
is deeply obliged to the  referee for
thoroughly reading and correcting and for suggestions,
improving the presentation, substantially.

\end{document}